\newenvironment{customthm}[1]
  {\innercustomthm}
  {\endinnercustomthm}
\definecolor{gold}{RGB}{255,207,64}
\newcommand{\C}{\mathbb{C}}
\newcommand{\Z}{\mathbb{Z}}
\newcommand{\boldc}{{\mathbf{c}}}
\newcommand{\bolde}{{\mathbf{e}}}
\newcommand{\hatomega}{\widehat{\omega}}
\newcommand{\Hom}{{\mathrm{Hom}}}
\newcommand{\sgn}{{\mathrm{sgn}}}
\newcommand{\pp}{\mathrm{PP}}
\newcommand{\trip}{\mathsf{trip}}
\newcommand{\tripstrand}{\mathsf{trip}}
\newcommand{\sep}{\mathsf{sep}}
\newcommand{\barone}{\overline{1}}
\newcommand{\bartwo}{\overline{2}}
\newcommand{\barthree}{\overline{3}}
\newcommand{\barfour}{\overline{4}}
\newcommand{\spp}{\mathrm{SPP}}
\newcommand{\cspp}{\mathrm{CSPP}}
\newcommand{\scpp}{\mathrm{SCPP}}
\newcommand{\tspp}{\mathrm{TSPP}}
\newcommand{\tsscpp}{\mathrm{TSSCPP}}
\newcommand{\slr}{\mathfrak{sl}_r}
\renewcommand{\sl}{\mathfrak{sl}}
\newcommand{\subs}{\subseteq}
\renewcommand{\a}{\alpha}
\renewcommand{\b}{\partial}
\newtheorem*{main-theorem}{Main Theorem}
\newtheorem{thm}{Theorem}[section]
\newtheorem{lemma}[thm]{Lemma}
\newtheorem{prop}[thm]{Proposition}
\newtheorem{cor}[thm]{Corollary}
\theoremstyle{definition}
\newtheorem{defn}[thm]{Definition}
\newtheorem{example}[thm]{Example}
\newtheorem{remark}[thm]{Remark}
\newtheorem{algorithm}[thm]{Algorithm}
\begin{document}

\title[Webification of Symmetry classes of plane partitions]{Webification of Symmetry classes of plane partitions}

\author{Ashleigh Adams}
\email{ashleigh.adams@ndsu.edu}
\address{School of Mathematics\\North Dakota State University\\Fargo, ND 58102}

\author{Jessica Striker}
\email{jessica.striker@ndsu.edu}
\address{School of Mathematics\\North Dakota State University\\Fargo, ND 58102}


\keywords{webs, quantum groups, plane partitions, symmetry classes, tableaux, lattice words, plabic graphs}
\subjclass{05E10,05A19,13A50}

\begin{abstract}
    Webs are graphical objects that give a tangible, combinatorial way to compute and classify tensor invariants.
    Recently, [Gaetz, Pechenik, Pfannerer, Striker, Swanson 2023+] found a rotation-invariant web basis for $\mathrm{SL}_4$, as well as its quantum deformation $U_q(\sl_4)$, and a bijection between move equivalence classes of $U_q(\sl_4)$-webs and fluctuating tableaux such that web rotation corresponds to tableau promotion. 
    They also found a bijection between the set of plane partitions in an $a\times b\times c$ box and a benzene move equivalence class of $U_q(\sl_4)$-webs by determining the corresponding oscillating tableau. In this paper, we similarly find the oscillating tableaux corresponding to plane partitions in certain symmetry classes. We furthermore show that there is a projection from $U_q(\sl_4)$ invariants to $U_q(\sl_r)$ for $r=2,3$ for webs arising from certain symmetry classes. 
\end{abstract}

\maketitle

\section{Introduction}
$U_q(\slr)$-webs form a powerful graphical calculus for computing $U_q(\slr)$ invariants. Combinatorially, $U_q(\slr)$-webs can be interpreted as bi-colored planar graphs on a disk, called \emph{plabic graphs} \cite{hopkins2022promotion}.  
Postnikov \cite{postnikov2006total} introduced a tool called \emph{trip permutations}, in which one walks along the edges of a 
plabic graph, taking a right at every black vertex and a left at every white vertex. This definition was extended in \cite{gaetz2023rotation}, wherein they define a collection of walks along the edges of an \emph{hourglass plabic graph} (Definition~\ref{def:hourglass-plabic-graph}). The $i$th trip, denoted $\trip_i,$ is the walk which takes the $i$th right at every black vertex and the $i$th left at every white vertex (see Definition \ref{def:trip permutations on hgpgs}).

With this generalized definition of trip permutations, \cite{gaetz2023rotation} define the \emph{separation label} of each edge, with values in $\{1,\ldots, r\}$ (Definition \ref{def: separation-labelings}) and use the separation labels of the boundary edges (the \emph{boundary word}) to
construct a unique $r$ row \emph{fluctuating tableau}  \cite[Definition~2.1]{gaetz2023promotion}. 

The authors showed that fluctuating tableaux index equivalence classes of \emph{contracted}, \emph{fully-reduced}, \emph{top} hourglass plabic graphs. 
More importantly, they show that these hourglass plabic graphs form a rotation-invariant web-basis for the space of $U_q(\sl_4)$-invariants \cite[Theorem~A]{gaetz2023rotation}. 
The combinatorial heart of this result lies in \cite[Theorem~B]{gaetz2023rotation}, where the authors show that there is a bijection between $4$-row rectangular fluctuating tableaux and equivalence classes of $U_q(\sl_4)$-webs. Moreover, they showed that this bijection relates trip permutations on the hourglass plabic graph to a certain dynamical action on fluctuating tableau, called promotion. 

In a surprising observation, \cite{gaetz2023rotation} also found a $U_q(\sl_4)$-web equivalence class in bijection with the set of \emph{plane partitions} in an $a\times b\times c$ box. 
In this paper, we study \emph{symmetry classes} of plane partitions from the perspective of webs. We focus on the following symmetry classes: symmetric ($\spp$), cyclically symmetric ($\cspp$), totally symmetric ($\tspp$), and totally symmetric self-complementary ($\tsscpp$).  
Each symmetry class is determined by a restriction of the plane partition, called the \emph{fundamental domain}. We give more explicit descriptions of these in Section \ref{subsec: symmetry-classes-of-pp} and give an example in Figure \ref{fig:symmetry-classes-example}.

\begin{figure*}[ht]
    \centering
    \includegraphics[scale=0.3]{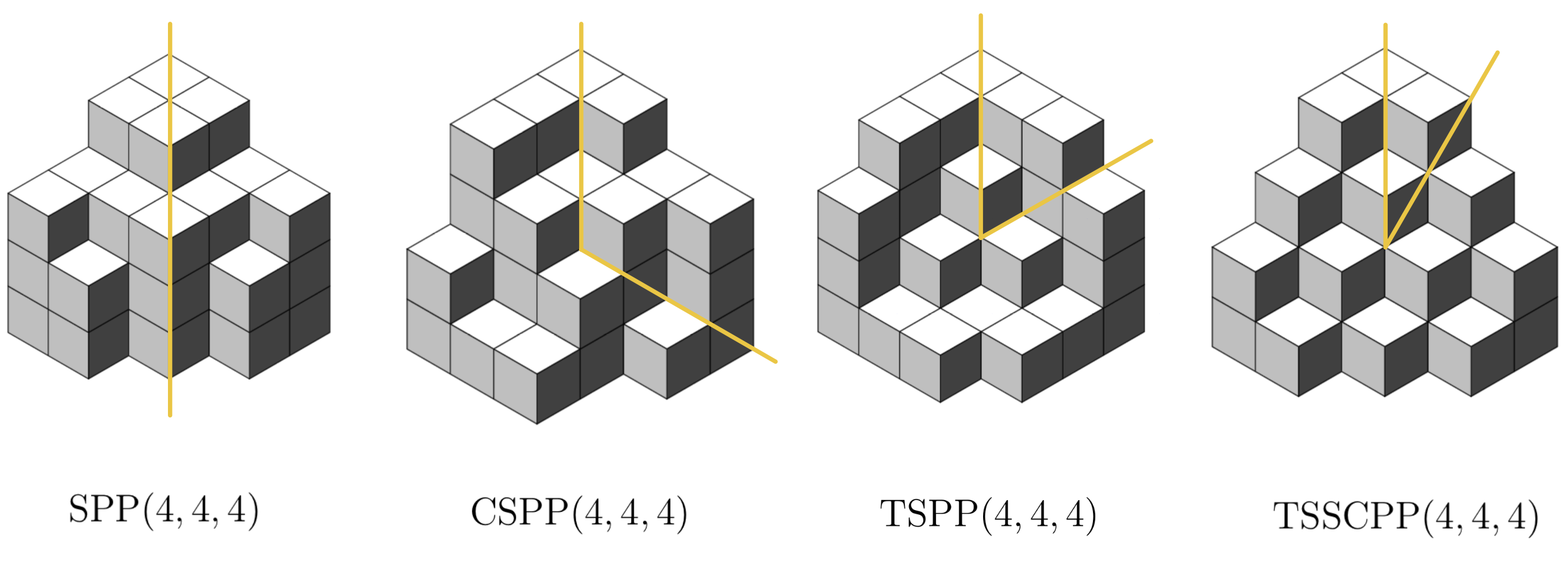}
    \caption{ 
    Examples of plane partitions inside a $4\times 4\times 4$-box, with  
    the symmetry class listed below it and  
    its corresponding fundamental domain. 
    See Section \ref{subsec: symmetry-classes-of-pp} for details.}
    \label{fig:symmetry-classes-example}
\end{figure*}

The two main objects of interest in this paper are $U_q(\sl_4)$-webs (viewed as hourglass plabic graphs) and plane partitions. 
The webs that are in bijection with plane partitions are certain webs where each face is a hexagon and the hourglass edges give a perfect matching.
A $60$-degree rotation of a hexagonal face is called a \emph{benzene move}. (See Figure \ref{fig:benzene-move}.) 
In the bijection with plane partitions, a benzene move on the web results in the addition or removal of a box in the plane partition.
We say that two webs reside in the same benzene equivalence class if they differ only by a sequence of benzene moves. 
Benzene moves do not change the separation labels of the boundary edges. 
Thus, each equivalence class corresponds to a single boundary word. 

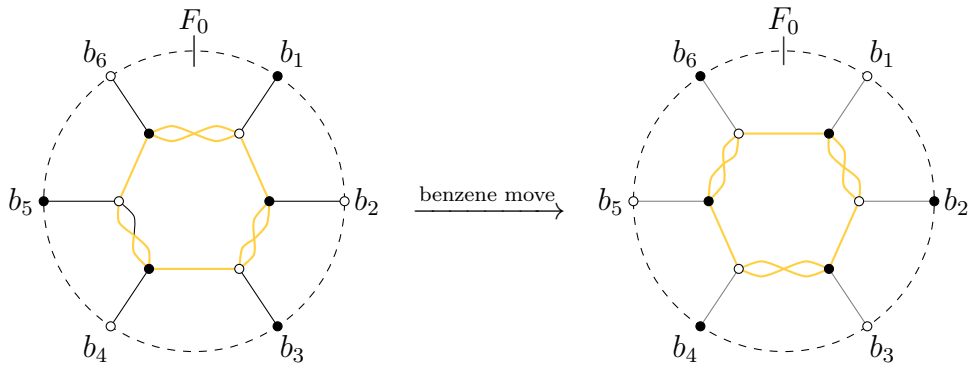
\begin{figure}
\centering
\begin{tikzpicture}[scale = 0.4]
\begin{scope}[xshift=-9.8cm]
\draw[dashed] (0,0) circle (5cm);

\node at (0,6) {\small $F_0$};
\node at (0,5) {$|$};

\draw[color=gold,thick] (1.5,2.25)  -- (2.5,0); 
\draw[color=gold,thick] (1.5,-2.25) -- (-1.5,-2.25); 
\draw[color=gold,thick] (-2.5,0)    -- (-1.5,2.25); 

\draw (2.7735,4.16025)    -- (1.5,2.25);
\draw (-2.7735, -4.16025) -- (-1.5,-2.25);
\draw (-2.7735, 4.16025)  -- (-1.5,2.25);
\draw (2.7735, -4.16025)  -- (1.5,-2.25);
\draw (2.5,0)             -- (5,0);
\draw (-2.5,0)            -- (-5,0);

\node[xshift = .2cm, yshift= .3cm]   at (2.7735,4.16025)   {$b_1$};
\node[xshift = .3cm]                 at (5,0)              {$b_2$};
\node[xshift = .2cm, yshift= -.3cm]  at (2.7735,-4.16025)  {$b_3$};
\node[xshift = -.2cm, yshift= -.3cm] at (-2.7735,-4.16025) {$b_4$};
\node[xshift = -.3cm]                at (-5,0)             {$b_5$};
\node[xshift = -.2cm, yshift= .3cm]  at (-2.7735,4.16025)  {$b_6$};

\draw[color=gold,thick] plot [smooth] coordinates {(-1.5,2.25) (-.75,2.5) (0,2.25)};
\draw[color=gold,thick] plot [smooth] coordinates {(0,2.25) (.75,2.5) (1.5,2.25)};
\draw[color=gold,thick] plot [smooth] coordinates {(-1.5,2.25) (-.75,2) (0,2.25)};
\draw[color=gold,thick] plot [smooth] coordinates {(0,2.25) (.75,2) (1.5,2.25)};

\draw[color=gold,thick] plot [smooth] coordinates {(2.5,0) (2.42,-0.762) (2,-1.25)};
\draw[color=gold,thick] plot [smooth] coordinates {(2,-1.26) (1.98,-1.79) (1.5,-2.25)};
\draw[color=gold,thick] plot [smooth] coordinates {(2.5,0) (2,-.6) (2,-1.25)};
\draw[color=gold,thick] plot [smooth] coordinates {(2,-1.26) (1.6,-1.56) (1.5,-2.25)};

\draw[color=gold,thick] plot [smooth] coordinates { (-1.5,-2.25) (-1.52,-1.6) (-2,-1.125) };
\draw plot [smooth] coordinates { (-2,-1.125) (-1.999,-.5) (-2.5,0)};
\draw[color=gold,thick] plot [smooth] coordinates { (-1.5,-2.25) (-2,-1.7) (-2,-1.125) };
\draw[color=gold,thick] plot [smooth] coordinates { (-2,-1.125) (-2.5,-.69) (-2.5,0)};

\draw[fill=white] (1.5,2.25)   circle (0.15cm);
\draw[fill=black] (2.5,0)      circle (0.15cm);
\draw[fill=white] (1.5,-2.25)  circle (0.15cm);
\draw[fill=black] (-1.5,-2.25) circle (0.15cm);
\draw[fill=white] (-2.5,0)     circle (0.15cm);
\draw[fill=black] (-1.5,2.25)  circle (0.15cm);

\draw[fill=black] (2.7735,4.16025)   circle (0.15cm);
\draw[fill=white] (5,0)              circle (0.15cm);
\draw[fill=black] (2.7735,-4.16025)  circle (0.15cm);
\draw[fill=white] (-2.7735,-4.16025) circle (0.15cm);
\draw[fill=black] (-5,0)             circle (0.15cm);
\draw[fill=white] (-2.7735,4.16025)  circle (0.15cm);

\end{scope}
\begin{scope}
    \node[] at (0,0) {$\xrightarrow{\text{benzene move}}$};
\end{scope}
\begin{scope}[xshift=9.8cm,rotate=180]
    \draw[dashed] (0,0) circle (5cm);

\node at (0,-6) {$F_0$};
\node at (0,-5) {\small $|$};

\draw[color=gold,thick] (1.5,2.25) -- (2.5,0); 
\draw[color=gold,thick] (1.5,-2.25) -- (-1.5,-2.25); 
\draw[color=gold,thick] (-2.5,0) -- (-1.5,2.25); 

\draw[color=gray] (2.7735,4.16025) -- (1.5,2.25);
\draw[color=gray] (-2.7735, -4.16025) -- (-1.5,-2.25);
\draw[color=gray] (-2.7735, 4.16025) -- (-1.5,2.25);
\draw[color=gray] (2.7735, -4.16025) -- (1.5,-2.25);
\draw[color=gray] (2.5,0) -- (5,0);
\draw[color=gray] (-2.5,0) -- (-5,0);

\node[xshift = -.2cm, yshift= -.3cm] at (2.7735,4.16025)   {$b_4$};
\node[xshift = -.3cm]                at (5,0)              {$b_5$};
\node[xshift = -.2cm, yshift= .3cm]  at (2.7735,-4.16025)  {$b_6$};
\node[xshift = .2cm, yshift= .3cm]   at (-2.7735,-4.16025) {$b_1$};
\node[xshift = .3cm]                 at (-5,0)             {$b_2$};
\node[xshift = .2cm, yshift= -.3cm]  at (-2.7735,4.16025)  {$b_3$};

\draw[color=gold,thick] plot [smooth] coordinates {(-1.5,2.25) (-.75,2.5) (0,2.25)};
\draw[color=gold,thick] plot [smooth] coordinates {(0,2.25) (.75,2.5) (1.5,2.25)};
\draw[color=gold,thick] plot [smooth] coordinates {(-1.5,2.25) (-.75,2) (0,2.25)};
\draw[color=gold,thick] plot [smooth] coordinates {(0,2.25) (.75,2) (1.5,2.25)};

\draw[color=gold,thick] plot [smooth] coordinates {(2.5,0) (2.42,-0.762) (2,-1.25)};
\draw[color=gold,thick] plot [smooth] coordinates {(2,-1.26) (1.98,-1.79) (1.5,-2.25)};
\draw[color=gold,thick] plot [smooth] coordinates {(2.5,0) (2,-.6) (2,-1.25)};
\draw[color=gold,thick] plot [smooth] coordinates {(2,-1.26) (1.6,-1.56) (1.5,-2.25)};

\draw[color=gold,thick] plot [smooth] coordinates { (-1.5,-2.25) (-1.52,-1.6) (-2,-1.125) };
\draw[color=gold,thick] plot [smooth] coordinates { (-2,-1.125) (-1.999,-.5) (-2.5,0)};
\draw[color=gold,thick] plot [smooth] coordinates { (-1.5,-2.25) (-2,-1.7) (-2,-1.125) };
\draw[color=gold,thick] plot [smooth] coordinates { (-2,-1.125) (-2.5,-.69) (-2.5,0)};

\draw[fill=white] (1.5,2.25)   circle (0.15cm);
\draw[fill=black] (2.5,0)      circle (0.15cm);
\draw[fill=white] (1.5,-2.25)  circle (0.15cm);
\draw[fill=black] (-1.5,-2.25) circle (0.15cm);
\draw[fill=white] (-2.5,0)     circle (0.15cm);
\draw[fill=black] (-1.5,2.25)  circle (0.15cm);

\draw[fill=black] (2.7735,4.16025)   circle (0.15cm);
\draw[fill=white] (5,0)              circle (0.15cm);
\draw[fill=black] (2.7735,-4.16025)  circle (0.15cm);
\draw[fill=white] (-2.7735,-4.16025) circle (0.15cm);
\draw[fill=black] (-5,0)             circle (0.15cm);
\draw[fill=white] (-2.7735,4.16025)  circle (0.15cm);

\end{scope}
\end{tikzpicture}
    \caption{On the left we have an hourglass plabic graph, and on the right is the resulting hourglass plabic graph after the application of a benzene move.}
    \label{fig:benzene-move}
\end{figure}

A description of the boundary word for the set of plane partitions in an $a\times b\times c$ box was given in \cite{gaetz2023rotation}, restated below in Theorem \ref{thm:general-lattice-words}. In this paper, we determine in Theorem \ref{thm: lattice-words-symmetry-classes} the analogous boundary words for certain symmetry classes of plane partitions.

\begin{thm}[\protect{\cite[Prop.~5.5]{gaetz2023rotation}}]
\label{thm:general-lattice-words}
The $U_q(\sl_4)$-web equivalence class given by contracted, fully-reduced hourglass plabic graphs with boundary word
\[
\begin{array}{ll}
    \pp(a,b,c): & 1^a~\Bar{4}^c~2^b~\Bar{1}^{|a-b|}~\Bar{2}^b~4^c~\Bar{1}^a
\end{array}
\]
is in bijection with the set $\pp(a,b,c).$
\end{thm}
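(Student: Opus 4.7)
The plan is to construct the bijection via the classical correspondence between plane partitions in an $a \times b \times c$ box and rhombus tilings of a hexagon with side lengths $a, b, c, a, b, c$. Given $\pi \in \pp(a,b,c)$, one first produces a rhombus tiling of the hexagon, then builds an hourglass plabic graph by overlaying the bipartite honeycomb dual to the triangular refinement of the hexagon and promoting the edges picked out by the tiling into hourglass (doubled) edges. The $2(a+b+c)$ boundary vertices of the resulting graph are distributed along six arcs of the disk boundary in accordance with the side lengths $a, c, b, a, c, b$; each internal face is a hexagon; and the hourglass edges form a perfect matching of the interior vertices.

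The first step is to verify that this graph is contracted, fully-reduced, and top, which is immediate from the honeycomb structure together with the hexagonal faces and matched hourglass edges. The next step is to compute the separation labels on the boundary edges. I would analyze each trip $\trip_i$ for $i \in \{1,2,3,4\}$ locally at each boundary vertex, exploiting the fact that in an hourglass plabic graph built from a matched honeycomb, the walk determined by ``take the $i$th right/left'' at each interior vertex propagates along the three lattice directions of the hexagon in a predictable way. The claim is that the separation label of a boundary edge depends only on which of the six arcs the boundary vertex lies on, not on the interior tiling, and a direct inventory then recovers the boundary word $1^a \, \overline{4}^c \, 2^b \, \overline{1}^{|a-b|} \, \overline{2}^b \, 4^c \, \overline{1}^a$.

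The final step is to identify benzene moves on the hourglass plabic graph with the classical ``cube flip'' on plane partitions, equivalently a rotation of three rhombi meeting at a shared vertex in the tiling; this correspondence is built into the construction, as illustrated in Figure \ref{fig:benzene-move}. Since $\pp(a,b,c)$ is connected under cube flips and benzene moves preserve the boundary word, the image of the map lies in a single benzene equivalence class, which by the classification of \cite{gaetz2023rotation} is fully determined by the boundary word. Injectivity is automatic because distinct tilings produce distinct hourglass matchings.

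The main obstacle is the boundary-word computation in the second step: one must show that the trip walks through an arbitrary honeycomb-plus-matching configuration exit at a boundary vertex with a separation label determined by the boundary vertex's position alone. The key observation is that this is a local invariant: a trip entering a hexagonal face with a given matching exits in a direction depending only on the entry direction and the local matching data, so by induction along the trip, the exiting label is pinned down by the geometry of the six arcs independently of the interior tiling. Once this is established, the boundary-word enumeration reduces to counting, within each of the six arcs, which of the four trips produces which separation label.
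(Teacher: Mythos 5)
Note first that the paper does not reprove this statement: it is quoted verbatim from \cite[Prop.~5.5]{gaetz2023rotation}, and the supporting material the paper does supply (the unit-cube-to-web dictionary of Section \ref{subsec:hourglass-plabic-graphs and -pp}, Lemma \ref{lemma: benzene-moves-and-trips} on benzene-invariance of trips, Lemma \ref{lem: global-trip-routes} on where each trip must terminate, and Proposition \ref{prop: restriction preserves words}) follows exactly the strategy you outline: honeycomb-plus-matching construction, trip endpoints independent of the interior tiling, benzene move $=$ cube flip. So your overall architecture is the right one. However, your proposal has a concrete quantitative gap: your construction produces a graph with $2(a+b+c)$ boundary vertices distributed on six arcs, while the target boundary word $1^a\,\overline{4}^c\,2^b\,\overline{1}^{|a-b|}\,\overline{2}^b\,4^c\,\overline{1}^a$ has $2(a+b+c)+|a-b|$ letters. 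A ``direct inventory over the six arcs'' therefore cannot produce the stated word when $a\neq b$; the block $\overline{1}^{|a-b|}$ does not sit on any side of the hexagon and must be accounted for separately (it reflects the fact that the two arcs you label with $\overline{2}^b$ and $2^b$ are opposite arcs of lengths forced by the box to equal $a$ and $b$ respectively, so the inventory on those arcs is not the homogeneous one you assert). Your proof as written silently assumes $a=b$, which is why the paper's own listing of the subwords in Section \ref{subsec: A description of fundamental domains} (which is only ever applied with $a=b$) omits this block, but the theorem you are proving does not.

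Two further points. First, your surjectivity argument conflates two equivalence relations: the classification in \cite{gaetz2023rotation} that is ``fully determined by the boundary word'' is the classification of \emph{move}-equivalence classes (benzene together with square moves), whereas the theorem concerns a single \emph{benzene} equivalence class, which is a priori finer. You still owe an argument that every contracted fully-reduced hourglass plabic graph with this boundary word is benzene-equivalent to one arising from a rhombus tiling; connectivity of $\pp(a,b,c)$ under cube flips gives you that the tilings form a single benzene class, not that nothing else shares the boundary word. Second, a smaller but telling slip: you propose to ``analyze each trip $\trip_i$ for $i\in\{1,2,3,4\}$,'' but by Definition \ref{def:trip permutations on hgpgs} there are only three trips; the separation label in $\{1,\dots,4\}$ is one plus the \emph{count} of trips separating the adjacent face from $F_0$, not a label attached to a fourth trip. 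The inventory in your final step should be organized as in Lemma \ref{lem: global-trip-routes}: for each arc, determine which of $\trip_1,\trip_2,\trip_3$ separate, and add one.
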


In Section \ref{subsec: proof of lattice words of symmetry classes}, we prove our first main theorem. We give a less precise statement of the theorem below:
\begin{thm}
\label{thm: lattice-words-symmetry-classes}
    The boundary words of $U_q(\sl_4)$-webs corresponding to plane partitions of select symmetry classes are as follows:
\[
\begin{array}{ll}
     \spp(a,a,c):  & 1^a~\Bar{4}^c~2^a~\{4^c,(34)^a\} \\
     \cspp(a,a,a): & 1^a~\Bar{4}^a~\{(\overline{31})^m,\Bar{1}^{a-m}\}~\{4^{a-m},(34)^m\} \\
     \tspp(a,a,a): & 1^a~\{(23)^{a-m},2^m\}~\{4^{a-m},(34)^m\} \\
     \tsscpp(a,a,a), a = 2d: & 1^a~\underbrace{2~\Bar{4}~2~\cdots\Bar{4}~2}_{2d-1}~\{4^{d-1},(34)^{d-1}\}~34
\end{array}
\]
where $\{\cdots\}$ is a multiset from which one builds a subword using every letter in the multiset with the multiplicity given by the exponent. Each of these multisets have particular restrictions on the possible subwords given by the letters in the multiset.  The list of the restrictions can be found in Section \ref{subsec: proof of lattice words of symmetry classes}.
\end{thm}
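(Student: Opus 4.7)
The plan is to prove each case of Theorem~\ref{thm: lattice-words-symmetry-classes} separately, using the lozenge-tiling interpretation of plane partitions together with Theorem~\ref{thm:general-lattice-words}. The underlying observation is that in each symmetry class the hexagonal lozenge tiling (and hence the associated hourglass plabic graph) has a geometric symmetry matching that of the plane partition, so the separation labels on the boundary are constrained by this symmetry. Concretely, the initial portion $1^a\Bar{4}^c2^a$ (or $1^a\Bar{4}^a$ in the cubical cases) of each boundary word corresponds to the sides of the hexagon unaffected by the symmetry, and is forced by the PP boundary word of Theorem~\ref{thm:general-lattice-words}. The multisets that follow parametrize the symmetry-constrained labels on the remaining boundary edges.

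For $\spp(a,a,c)$, reflective symmetry along a diagonal of the $a\times a$ face forces $a=b$, and the multiset $\{4^c,(34)^a\}$ should allow all valid interleavings of a single letter $4$ (coming from a lozenge lying away from the axis of symmetry) and a pair $34$ (coming from a lozenge whose image under the symmetry yields the split pattern). I would verify this by directly computing the separation labels on the boundary of a web built from the lozenge tiling of an arbitrary SPP and then characterizing which output words arise. For $\cspp(a,a,a)$ and $\tspp(a,a,a)$, the cyclic (and additional reflective) symmetry forces $a=b=c$ and partitions the boundary hexagon into three (respectively six) equivalent arcs. I would track how the lozenges crossing the boundary of the fundamental domain translate into the letters $(\overline{31})$, $\Bar{1}$, $(23)$, and $2$, with the parameter $m$ counting lozenges of a specified type at a designated corner of the fundamental domain.

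For $\tsscpp(a,a,a)$ with $a=2d$, self-complementarity further induces an antipodal involution on the web, which together with total symmetry forces the rigid alternating pattern $2\Bar{4}2\cdots\Bar{4}2$ of length $2d-1$ in the middle portion of the word. The trailing $34$ and the multiset $\{4^{d-1},(34)^{d-1}\}$ then capture the remaining degrees of freedom compatible with both constraints simultaneously.

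The main obstacle will be characterizing the precise restrictions on each multiset. These restrictions must encode the condition that the chosen subword, when inserted into the partial boundary word, is a lattice word arising from an actual hourglass plabic graph for a plane partition in the specified symmetry class, rather than a syntactically permissible but non-realizable arrangement of letters. I expect this to require a careful analysis using the separation-labeling framework of \cite{gaetz2023rotation} and an inductive argument that tracks how adding a box in the fundamental domain of the plane partition modifies the boundary word. Verifying the bijection in both directions---every symmetric plane partition yields such a boundary word, and every such boundary word corresponds to a symmetric plane partition---will come down to an explicit algorithm matching local patterns in the lozenge tiling near the boundary of the fundamental domain to the letters of the chosen subword.
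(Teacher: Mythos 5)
Your outline follows the same skeleton as the paper's proof: restrict the web to the fundamental domain, observe that the separation labels are unchanged by this restriction, read off the cardinal-direction subwords from Theorem~\ref{thm:general-lattice-words}, and then analyze the edges cut by the new boundary of the fundamental domain, with simple edges contributing single letters and hourglass edges split by the boundary contributing pairs such as $(34)$, $(23)$, $(\overline{31})$. That identification of where the paired letters come from is correct, and the decomposition into ``forced'' and ``free'' portions of the word matches the paper.

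However, there are two genuine gaps. First, you never supply the mechanism that actually determines the values of the separation labels on the fundamental-domain boundaries. By Definition~\ref{def: separation-labelings}, $\sep(e)$ is computed by counting which of $\trip_1,\trip_2,\trip_3$ through $e$ separate the adjacent face from the base face $F_0$, so one must know where each trip starting on each boundary terminates. The paper establishes this in Lemma~\ref{lem: global-trip-routes} (using Lemma~\ref{lemma: benzene-moves-and-trips} to show benzene moves do not disturb the global routes), and only then can it conclude, e.g., that a simple edge on $\b_{180}$ has label $4$, that a split edge on $\b_{120}$ yields $(\overline{31})$ rather than some other pair, and that $\b_{30}$ forces the alternating $2\,\barfour\,2\cdots\barfour\,2$. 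Your appeal to ``geometric symmetry constraining the labels'' does not substitute for this: symmetry tells you which edge types occur, not what their labels are. Second, the restrictions on the multisets --- the position-reversal dependence between $\omega_{120}$ and $\omega_{\widehat{180}}$ for $\cspp$, the partial-sum inequality for $\tspp$, and the Yamanouchi/Catalan condition for $\tsscpp$ --- are exactly the content you defer as ``the main obstacle.'' In the paper these follow respectively from the row--column conjugacy in the definition of $\cspp$, from the weak-decrease condition on stacks of cubes across the two boundaries, and from the bijection with non-intersecting lattice paths; your proposed induction on adding a box is a plausible alternative route but is not carried out, and it would need care since adding a box away from the fundamental-domain boundary is a benzene move and changes nothing, while adding a box adjacent to it changes the word in a way you have not described.
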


\begin{figure}[ht]
    \centering
    \includegraphics[width=0.45\linewidth]{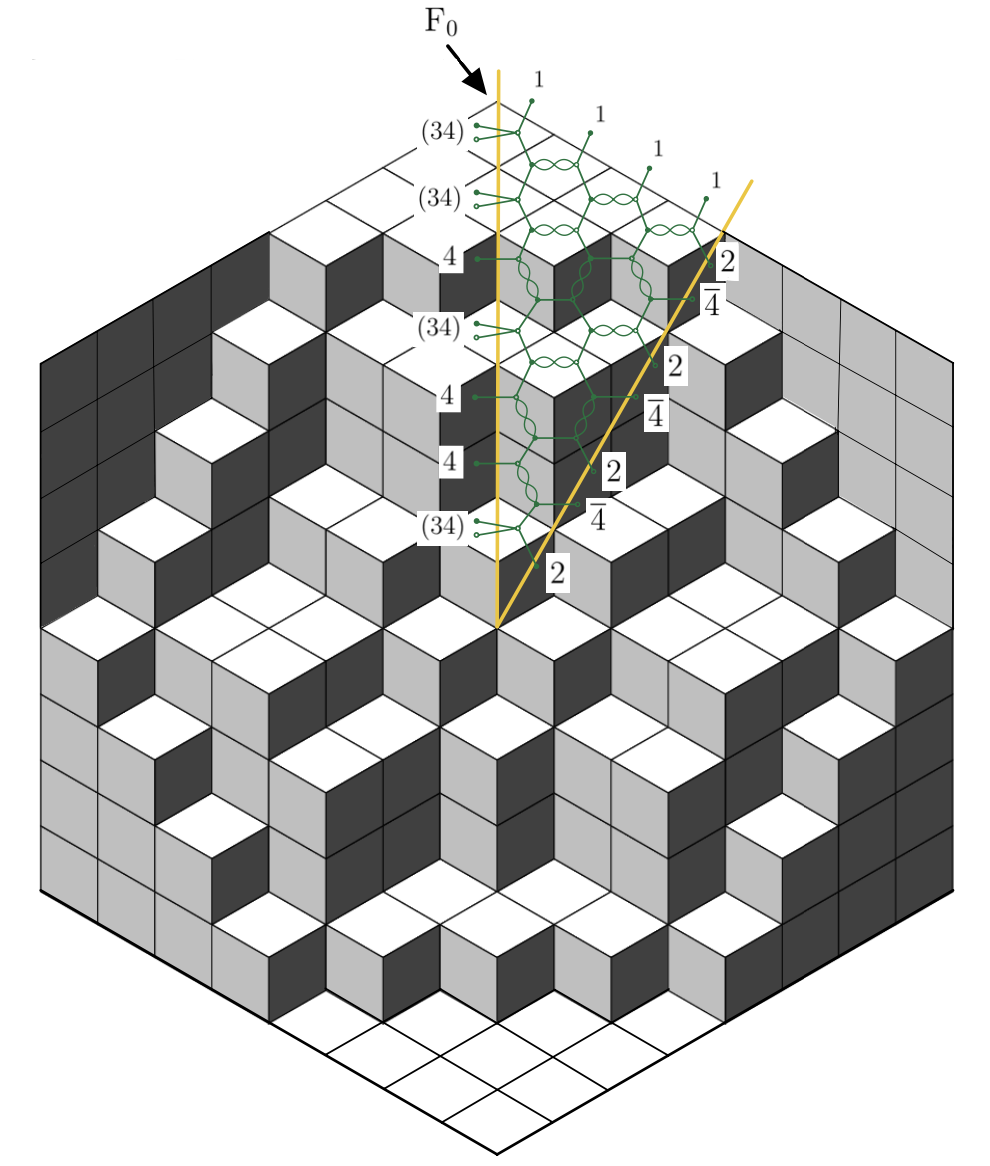}
    \caption{A totally symmetric self-complementary plane partition in an $8\times 8\times 8$ box. The $U_q(\sl_4)$-web in bijection with this plane partition is drawn on top of the plane partition, restricted to the fundamental domain of the symmetry class. We have labeled the top most face (\emph{base face}) as $F_0$. The boundary word corresponding to this web (and thus also this plane partition) is: 
    \[1~1~1~1~2~\barfour~2~\barfour~2~\barfour~2~(34)~4~4~(34)~4~(34)~(34).\]}
    \label{fig:TSSCPP-web-boundary-word}
\end{figure}

An example of Theorem \ref{thm: lattice-words-symmetry-classes} can be seen in Figure \ref{fig:TSSCPP-web-boundary-word}. 
We then use these lattice words to construct a combinatorial map in Algorithm \ref{alg:lattice-word-to-smaller web} that sends a $U_q(\sl_4)$-invariant to a $U_q(\sl_2)$ invariant or $U_q(\sl_3)$ invariant, depending on the symmetry class of the plane partition whose fundamental domain is in bijection with the $U_q(\sl_4)$-web. 
The algorithm leads to our second main result, Theorem \ref{thm:big invariant to small invariant}, in which we define a map 
\begin{equation*}
        \mathrm{Hom}_{U_q(\sl_4)}
        \left(
        \bigwedge\nolimits_q^{\underline{c}(\omega)}V_q,\C(q)
        \right)
        \to
        \mathrm{Hom}_{U_q(\slr)}\left(
        \bigwedge\nolimits_q^{\underline{c}(\hat{\omega})}V_q,\C(q)
        \right)       
\end{equation*}
given by $[W]_q\mapsto [\widehat{W}]_q,$ where $W$ and $\widehat{W}$ are webs with lattice words $\omega$ and $\hatomega,$ respectively, via Algorithm \ref{alg:lattice-word-to-smaller web}. 
This map is defined for a web corresponding to a symmetric, totally symmetric, or totally symmetric self complementary plane partition. 
In the second case, $r=3,$ otherwise $r=2.$ 

This process is equivalent to characterizing a collection of $U_q(\sl_4)$ representations that restrict to $U_q(\sl_2)$ or $U_q(\sl_3)$  representations. 
In this way, we are able to use the new combinatorics of hourglass plabic graphs to give maps between representations. 
In particular, since $U_q(\sl_2)$-webs can be viewed as non-crossing perfect matchings, the algorithm says that the equivalence class of $U_q(\sl_4)$-webs in bijection with the sets $\spp$ and $\tsscpp$ are in bijection with some set of decorated non-crossing matchings.

In forthcoming work, we will show how a dynamical action on tableaux called \emph{promotion}  maps through the projection from Theorem \ref{thm:big invariant to small invariant} onto the smaller invariant.

This paper is organized as follows: 
We begin in Section \ref{sec:background} with some basic background on $U_q(\sl_r)$-webs and their interpretation as quantum invariants. We then review the description of $U_q(\sl_4)$-webs as hourglass plabic graphs from \cite{gaetz2023rotation} and the associated combinatorial tools, including oscillating tableaux and lattice words. We  also review the symmetry classes of plane partitions that are the focus of this paper.
In Section \ref{sec: Lattice words of symmetry classes}, we describe the correspondence between hourglass plabic graphs and plane partitions. 
We also define vital notation for the the proof of Theorem \ref{thm: lattice-words-symmetry-classes}. 
In Section \ref{subsec: proof of lattice words of symmetry classes}, we give the proof of our first main result, Theorem \ref{thm: lattice-words-symmetry-classes}. 
We also give counting formulas for the number of elements in each equivalence class of lattice words of the four symmetry classes discussed. 
In Section \ref{sec:A combinatorial map between invariant spaces} we use the new lattice words given in our first main result to construct Algorithm \ref{alg:lattice-word-to-smaller web}. 
This algorithm allows us to prove our second main result, Theorem \ref{thm:big invariant to small invariant}. In this section we also give two large examples of Algorithm \ref{alg:lattice-word-to-smaller web} and Theorem \ref{thm:big invariant to small invariant}. Our first example shows mapping a $U_q(\sl_4)$-web resulting from a $\tsscpp$ to a $U_q(\sl_2)$-web (a non-crossing matching), while our second example is a little more involved and shows a $U_q(\sl_4)$-web resulting from a $\tspp$ to a $U_q(\sl_3)$-web.

\section*{Acknowledgments}
  Striker was supported by  NSF grant DMS-2247089 and Simons Foundation gift MP-TSM-00002802.  Adams was supported by NSF grant DMS-2247089 and NSF Graduate Fellowship Grant No. 2034612. We would like to thank Jesse Kim, Oliver Pechenik, and Joshua Swanson for helpful conversations.

\section{Background}
\label{sec:background}
This section is organized as follows. In Subsection \ref{subsec:sl4-webs} we give the definition of $U_q(\slr)$-webs as used in this paper and relate it to representation theory. In Subsection \ref{subsec: computing an invariant from a web} we describe how a web in our convention gives an invariant. In Subsection \ref{subsec: web-bases}, we give a short background on web bases for invariant spaces.
In Subsection \ref{subsec:hourglass-plabic-graphs}, we move into the combinatorics of webs. We begin discussing the main results of \cite{gaetz2023rotation} by describing webs as hourglass plabic graphs and assigning labels to the edges, called separation labels. 
We continue this discussion in Subsection \ref{subsec:oscillating-tableaux}, where we show how \cite{gaetz2023rotation} used the  separation labeling to relate webs to oscillating tableaux.  
We conclude the background section with Subsection \ref{subsec: symmetry-classes-of-pp}, in which we review the basics of plane partitions and give the definitions of symmetry classes of plane partitions.

\subsection{$U_q(\slr)$-webs and their representation theory} 
\label{subsec:sl4-webs}
Understanding the space of invariants of $\mathrm{SL}_r(\C)$, and its natural quantum deformation $U_q(\slr),$ has been of interest in numerous fields. A way to classify these spaces categorically via generators and relations has been developed over the past few decades in the form of certain diagrams called $\mathrm{SL}_r$-webs ($U_q(\slr)$-webs, resp.), or just ``webs" when the group and $r$ are understood. 
The graphical calculus formed by webs gives us a tangible, combinatorial way to compute and classify tensor invariants.
Since the work in this paper also holds in the quantum group setting, we will define everything in terms of $U_q(\slr).$ For those uninterested in quantum deformations, one can naively disregard the $q$ term by setting $q = 1$ and replacing $U_q(\slr)$ with $\mathrm{SL_r}(\C).$

Consider the group $G = \mathrm{SL_r}(\C).$  Let $V=\C^r$ be the defining representation for $G$ and take $V({\alpha_{c}}) = \bigwedge^{c}V,$ for weight $\alpha_c.$ We denote the dual by $\bigwedge^{-c}V =(\bigwedge^{c}V)^*.$ For $\underline{c} = (c_1,\ldots,c_n),$ $c_i\in\Z,$ the subspace of $\mathrm{SL}_r(\C)$-invariant multilinear forms is
\begin{align*}
    \mathsf{Inv}_{G}
    \left(
    \bigwedge\nolimits^{\underline{c}}V
    \right) 
    & :=
    \Hom_{G}
    \left( 
    \bigwedge\nolimits^{\underline{c}}V,\C
    \right) \\
    & :=
    \Hom_{G}
    \left( 
    \bigwedge\nolimits^{c_1}V\otimes\cdots\otimes\bigwedge\nolimits^{c_n}V,\C
    \right).
\end{align*}
A web $W$ gives an element of this space $[W],$ motivating the study of webs as a tool to characterize the space of (quantum) invariants. If, as in this paper, one would rather consider the quantum group, $U_q(\slr),$ replacing $G$ with $U_q(\slr)$ deforms $\mathsf{Inv}_{\mathsf{SL}_r}(\bigwedge\nolimits^{\underline{c}}V),$ thereby giving those quantum-deformed linear functionals on $ \bigwedge\nolimits^{\underline{c}}_qV_q$ that are invariant under $U_q(\slr).$ 

We warn the reader that there are different conventions for constructing webs. Each convention has valuable uses. The work here builds on \cite{gaetz2023rotation}, which uses the convention developed by \cite{fraser2019dimers}.
The convention developed by \cite{morrison2007diagrammatic,cautis2014webs} are webs that look more like oriented ``ladders." It is within this convention that one can more easily compute the actual morphisms in the free monoidal category of webs, which are suppressed within the convention used here.
In order to convert from the convention used in this paper to that of \cite{cautis2014webs}, we refer the reader to \cite[Section~2.1]{gaetz2023rotation} for a tactile conversion and to \cite{poudel2022comparison} for a more theoretical comparison.

\begin{defn}[\protect{\cite[Definition~3.2]{fraser2019dimers}}]
\label{def:tensor-diagram}
A $U_q(\sl_r)$-\emph{web} $W$ is a finite planar graph on a disk such that 
\begin{itemize}
    \item $W$ has a proper $r$-coloring;
    \item $W$ is fully-connected;
    \item every boundary vertex is single valent;
    \item every internal vertex has valence less than or equal to $r;$
    \item each edge incident to an internal vertex has weights $\a_1,\ldots,\a_t$ such that $\a_1+\ldots+\a_t = r$ for $1\leq t\leq r.$
\end{itemize}
\end{defn}

\begin{figure}[ht]
\centering
\begin{tikzpicture}[scale = 0.5]
\begin{scope}[xshift = -7.5cm]
\draw[dashed] (0,0) circle (5cm);

\node at (0,6) {$F_0$};
\node at (0,5) {\small $|$};

\draw (1.5,2.25) -- (2.5,0) 
                 -- (1.5,-2.25) 
                 -- (-1.5,-2.25) 
                 -- (-2.5,0)
                 -- (-1.5,2.25) 
                 -- cycle;
\draw (2.7735,4.16025)    -- (1.5,2.25);
\draw (-2.7735, -4.16025) -- (-1.5,-2.25);
\draw (-2.7735, 4.16025)  -- (-1.5,2.25);
\draw (2.7735, -4.16025)  -- (1.5,-2.25);
\draw (2.5,0)             -- (5,0);
\draw (-2.5,0)            -- (-5,0);

\draw[fill=white] (1.5,2.25)   circle (0.15cm);
\draw[fill=black] (2.5,0)      circle (0.15cm);
\draw[fill=white] (1.5,-2.25)  circle (0.15cm);
\draw[fill=black] (-1.5,-2.25) circle (0.15cm);
\draw[fill=white] (-2.5,0)     circle (0.15cm);
\draw[fill=black] (-1.5,2.25)  circle (0.15cm);

\draw[fill=black] (2.7735,4.16025)   circle (0.15cm);
\draw[fill=white] (5,0)              circle (0.15cm);
\draw[fill=black] (2.7735,-4.16025)  circle (0.15cm);
\draw[fill=white] (-2.7735,-4.16025) circle (0.15cm);
\draw[fill=black] (-5,0)             circle (0.15cm);
\draw[fill=white] (-2.7735,4.16025)  circle (0.15cm);

\node[xshift = .2cm, yshift= .3cm]   at (2.7735,4.16025)   {$b_1$};
\node[xshift = .3cm]                 at (5,0)              {$b_2$};
\node[xshift = .2cm, yshift= -.3cm]  at (2.7735,-4.16025)  {$b_3$};
\node[xshift = -.2cm, yshift= -.3cm] at (-2.7735,-4.16025) {$b_4$};
\node[xshift = -.3cm]                at (-5,0)             {$b_5$};
\node[xshift = -.2cm, yshift= .3cm]  at (-2.7735,4.16025)  {$b_6$};

\node[xshift = 0.22cm] at (2.13675,3.205125)    {\small \textcolor{orange}{$1$}};
\node[yshift = 0.22cm] at (3.75,0)              {\small \textcolor{orange}{$1$}};
\node[xshift = 0.22cm] at (2.13675,-3.205125)   {\small \textcolor{orange}{$1$}};
\node[yshift = 0.22cm] at (-3.75,0)             {\small \textcolor{orange}{$1$}};
\node[xshift = -0.22cm] at (-2.13675,-3.205125) {\small \textcolor{orange}{$1$}};
\node[xshift = -0.22cm] at (-2.13675,3.205125)  {\small \textcolor{orange}{$1$}};

\node[xshift = 0.22cm]  at (2,1.125)   {\small \textcolor{orange}{$1$}};
\node[xshift = 0.22cm]  at (2,-1.125)  {\small \textcolor{orange}{$2$}};
\node[yshift = -0.22cm] at (0,-2.25)   {\small \textcolor{orange}{$1$}};
\node[xshift = -0.22cm] at (-2,-1.125) {\small \textcolor{orange}{$2$}};
\node[xshift = -0.22cm] at (-2,1.125)  {\small \textcolor{orange}{$1$}};
\node[yshift = 0.22cm]  at (0,2.25)    {\small \textcolor{orange}{$2$}};

\end{scope}
\begin{scope}[xshift = 7.5cm]
\draw[dashed] (0,0) circle (5cm);

\node at (0,6) {$F_0$};
\node at (0,5) {\small $|$};

\draw (1.5,2.25)  -- (2.5,0); 
\draw (1.5,-2.25) -- (-1.5,-2.25); 
\draw (-2.5,0)    -- (-1.5,2.25); 

\draw (2.7735,4.16025)    -- (1.5,2.25);
\draw (-2.7735, -4.16025) -- (-1.5,-2.25);
\draw (-2.7735, 4.16025)  -- (-1.5,2.25);
\draw (2.7735, -4.16025)  -- (1.5,-2.25);
\draw (2.5,0)             -- (5,0);
\draw (-2.5,0)            -- (-5,0);

\node[xshift = .2cm, yshift= .3cm]  at (2.7735,4.16025)    {$b_1$};
\node[xshift = .3cm]                 at (5,0)              {$b_2$};
\node[xshift = .2cm, yshift= -.3cm]  at (2.7735,-4.16025)  {$b_3$};
\node[xshift = -.2cm, yshift= -.3cm] at (-2.7735,-4.16025) {$b_4$};
\node[xshift = -.3cm]                at (-5,0)             {$b_5$};
\node[xshift = -.2cm, yshift= .3cm]  at (-2.7735,4.16025)  {$b_6$};

\node[xshift = 0.22cm]  at (2.13675,3.205125)    {\small \textcolor{magenta}{$1$}};
\node[yshift = 0.22cm]  at (3.75,0)              {\small \textcolor{magenta}{$2$}};
\node[xshift = 0.22cm]  at (2.13675,-3.205125)   {\small \textcolor{magenta}{$3$}};
\node[xshift = -0.22cm] at (-2.13675,-3.205125)  {\small \textcolor{magenta}{$1$}};
\node[yshift = 0.22cm]  at (-3.75,0)             {\small \textcolor{magenta}{$2$}};
\node[xshift = -0.22cm] at (-2.13675,3.205125)   {\small \textcolor{magenta}{$3$}};

\node[xshift = 0.22cm]  at (2,1.125)   {\small \textcolor{teal}{$3$}};
\node[yshift = -0.22cm] at (0,-2.25)   {\small \textcolor{teal}{$2$}};
\node[xshift = -0.22cm] at (-2,1.125)  {\small \textcolor{teal}{$1$}};

\node[yshift = 0.3cm] at (0,2.25)     {\small \textcolor{teal}{$24$}};
\draw plot [smooth] coordinates {(-1.5,2.25) (-.75,2.5) (0,2.25)};
\draw plot [smooth] coordinates {(0,2.25) (.75,2.5) (1.5,2.25)};
\draw plot [smooth] coordinates {(-1.5,2.25) (-.75,2) (0,2.25)};
\draw plot [smooth] coordinates {(0,2.25) (.75,2) (1.5,2.25)};

\node[xshift = 0.35cm,yshift=-.2] at (2,-1.125)   {\small \textcolor{teal}{$14$}};
\draw plot [smooth] coordinates {(2.5,0) (2.42,-0.762) (2,-1.25)};
\draw plot [smooth] coordinates {(2,-1.26) (1.98,-1.79) (1.5,-2.25)};
\draw plot [smooth] coordinates {(2.5,0) (2,-.6) (2,-1.25)};
\draw plot [smooth] coordinates {(2,-1.26) (1.6,-1.56) (1.5,-2.25)};

\node[xshift = -0.35cm,yshift=-.2] at (-2,-1.125) {\small \textcolor{teal}{$34$}};
\draw plot [smooth] coordinates { (-1.5,-2.25) (-1.52,-1.6) (-2,-1.125) };
\draw plot [smooth] coordinates { (-2,-1.125) (-1.999,-.5) (-2.5,0)};
\draw plot [smooth] coordinates { (-1.5,-2.25) (-2,-1.7) (-2,-1.125) };
\draw plot [smooth] coordinates { (-2,-1.125) (-2.5,-.69) (-2.5,0)};

\draw[fill=white] (1.5,2.25)   circle (0.15cm);
\draw[fill=black] (2.5,0)      circle (0.15cm);
\draw[fill=white] (1.5,-2.25)  circle (0.15cm);
\draw[fill=black] (-1.5,-2.25) circle (0.15cm);
\draw[fill=white] (-2.5,0)     circle (0.15cm);
\draw[fill=black] (-1.5,2.25)  circle (0.15cm);

\draw[fill=black] (2.7735,4.16025)   circle (0.15cm);
\draw[fill=white] (5,0)              circle (0.15cm);
\draw[fill=black] (2.7735,-4.16025)  circle (0.15cm);
\draw[fill=white] (-2.7735,-4.16025) circle (0.15cm);
\draw[fill=black] (-5,0)             circle (0.15cm);
\draw[fill=white] (-2.7735,4.16025)  circle (0.15cm);

\end{scope}
\end{tikzpicture}
    \caption{On the left is a $U_q(\sl_4)$-web of type 
    $
    \underline{c} = (
    \textcolor{orange}{1},
    \textcolor{orange}{-1},
    \textcolor{orange}{1},
    \textcolor{orange}{-1},
    \textcolor{orange}{1},
    \textcolor{orange}{-1},
    ).
    $
    with labeled edge weights. On the right, is the same web as an hourglass plabic graph and given a proper coloring.
   }
    \label{fig:web-to-hourglass-web}
\end{figure}

\begin{example}
     In Figure \ref{fig:web-to-hourglass-web}, on the left we have a $U_q(\sl_4)$-web of type 
    $
    \underline{c} = (
    \textcolor{orange}{1},
    \textcolor{orange}{-1},
    \textcolor{orange}{1},
    \textcolor{orange}{-1},
    \textcolor{orange}{1},
    \textcolor{orange}{-1},
    )
    $
    with boundary vertices $b_1,\ldots,b_6.$
    The internal edges on the hexagon alternate between weight $1$ and weight $2.$ On the right, we translate the web into an hourglass plabic graph with base face $F_0$ á la \cite{gaetz2023rotation}. We have also given the hourglass plabic graph a proper edge coloring with colors $\{1,2,3,4\}.$ 
\end{example}

A web corresponding to an invariant in 
$
\Hom_{U_q(\slr)}
\left(
\bigwedge\nolimits_q^{\underline{c}}V_q,\C(q)
\right)
$
has boundary with vertices $(b_1,\ldots,b_n)$ and edge weights $\underline{c} = (c_1,\ldots,c_n)$ where each vertex $b_i$ is incident to a single edge $e_i$ with edge weight $c_i.$ (From this point forward, we use weight to mean an edge weight in a graph.)

 \subsection{Computing an invariant from a web}
 \label{subsec: computing an invariant from a web} 
 A web $W$ allows us to calculate a specific invariant $[W]_q$ in the polynomial ring $\C(q)[x_{i,S}],$ $S\subs[r].$ Consider a $U_q(\slr)$-web $W$ with boundary vertices $b_1,\ldots,b_n$. Define on the edges $E(W)$ a proper coloring 
 \[
 \kappa: E(W)\times[r]\to 2^{[r]},
 \]
 $(e,\alpha)\mapsto S\in 2^{[r]}$ such that $|S|=\alpha$ and $e$ is an edge with weight $\alpha$ (i.e., if $e$ has weight $\alpha,$ the number of colors attached to $e$ is $\alpha$). 
 A coloring $\kappa$ is proper if for any vertex $v\in V(W)$ and incident edges $e_i(v),e_j(v),$ the sets $\kappa(e_i(v),\a_i)$ and $\kappa(e_j(v),\a_j)$ are disjoint, i.e., the labels on the edges incident to an internal vertex $v$ partition the set $[r].$

For each coloring we obtain a monomial in $[W]_q,$ given by 
\[
            \sgn_\kappa(W)
            q^{\mathsf{wgt}_\kappa(W)}
            \prod_{b_i\text{ black}}
            x_{
            i,\kappa(e(b_i),\alpha)
            }
            \prod_{b_i \text{ white}}
            y_{
            \kappa(e(b_i),\alpha'),i
            }.
\]
where $\sgn_\kappa(W)$ denotes the sign of $W$ and $\mathsf{wgt}_\kappa(W)$ denotes the $q$-weight. Computing this $q$-statistic is a bit involved, as it is dependent on the particular coloring of $W.$ 
Currently, one must move to the convention of \cite{cautis2014webs} and use the natural $U_q(\slr)$-module maps that arise from coproducts, duals, evaluation, coevaluation, and the identity as give in \cite{cautis2014webs} in order to compute $\mathsf{wgt}_\kappa(W)$.  
We refer the reader to \cite{gaetz2023rotation} for a discussion on the convention for computing $\sgn_\kappa(W)$. 

Thus,
\begin{equation}
            [W]_q = 
            \sum_{\text{colorings $\kappa$}}
            \left(
            \sgn_\kappa(W)
            q^{\mathsf{wgt}_\kappa(W)}
            \prod_{b_i\text{ black}}
            x_{
            i,\kappa(e(b_i),\alpha)
            }
            \prod_{b_i \text{ white}}
            y_{
            \kappa(e(b_i),\alpha'),i
            }
            \right).
\end{equation}

In this paper, we will focus on the case that $r=4.$

\subsection{Web bases}
\label{subsec: web-bases}
Although there are many many well-known bases for the space
$
\Hom_{\mathrm{G}}
\left( 
\bigwedge\nolimits^{\underline{c}}V,\C
\right),
$
it is natural to ask for a basis given by webs since each element can be described via webs. For $\mathrm{SL}_2,$ Kuperberg described a web-basis via non-crossing perfect matchings of points on a disk, which comes from the Temperly-Lieb algebra. (More on this in Remark \ref{rmk: a2-spiders}.) For $\mathrm{SL}_3$, Kuperberg developed a basis of non-elliptic $\mathrm{SL}_3$-webs \cite{kuperberg}. Both of these bases have the property of being \emph{rotation-invariant}, which means it is equivalent under a cyclic shift:
    \[
    \Hom_{U_q(\sl_r)}
    \left(
    \bigwedge\nolimits_q^{(c_1,c_2,\ldots,c_n)}V_q,\C(q)
    \right)
    \to
    \Hom_{U_q(\sl_r)}
    \left(
    \bigwedge\nolimits_q^{(c_2,\ldots,c_n,c_1)}V_q,\C(q)
    \right).
    \]

For $\mathrm{SL}_4,$ Kim gave a conjectural basis \cite{kim2003graphical}, which was extended to general $r$ by Morrison in his thesis \cite{morrison2007diagrammatic}. Cautis, Kamnizer, and Morrison \cite{cautis2014webs} resolved the conjecture, giving a compete description of the space of invariants via webs with a full list of generators and relations. They further extend the results to the quantum group, $U_q(\slr).$ 
Their basis, which has nice properties of its own, are not rotation-invariant. 
Fraser, Lam, and Le give an alternate basis \cite{fraser2019dimers}. There are other web-bases for simple Lie groups of higher rank, but most of them required arbitrary choices and none of them are rotation-invariant. For example, for $\mathrm{SL}_r,$ Westbury \cite{westbury2012web}  gave a basis of leading terms when representations are restricted to $c_i\in\{1,r-1\},$ which was extended by Fontaine \cite{fontaine2013buildings}.
 Elias \cite{elias2015light} gave basis for certain quantum deformations of these groups, while  Hagemeyer \cite{hagemeyer2018spiders} gave a basis in his thesis for the quantum group, $U_q(\sl_4).$
A $U_q(\sl_4)$ basis exhibiting the rotation-invariance property was recently given by Gaetz, Pechenik, Pfanner, Striker, and Swanson \cite{gaetz2023rotation}.

\begin{thm}[\protect{\cite[Theorem A]{gaetz2023rotation}}]
\label{thm:web-basis-GPPSS}
    The collection
        \[
        \{[W]_q~:~W \text{ is top fully reduced hourglass plabic graph of type }\underline{c} \}
        \]
        is a rotation-invariant basis for 
        $
        \Hom_{U_q(\sl_4)}
        \left(
        \bigwedge\nolimits_q^{\underline{c}}V_q,\C(q)
        \right)
        $.
\end{thm}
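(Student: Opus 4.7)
The plan is to establish Theorem \ref{thm:web-basis-GPPSS} by a three-pronged strategy: a dimension count, a spanning argument via web relations, and a structural verification of rotation invariance. First, I would compute the dimension of $\Hom_{U_q(\sl_4)}(\bigwedge_q^{\underline{c}} V_q, \C(q))$ using quantum Schur--Weyl duality, expressing it as the number of $4$-row rectangular fluctuating tableaux whose content records the type $\underline{c}$. This is essentially a character calculation and provides the target cardinality.

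Second, I would use the separation-labeling machinery (Definition \ref{def: separation-labelings}) to build a combinatorial bijection between move-equivalence classes of top fully-reduced hourglass plabic graphs of type $\underline{c}$ and the $4$-row rectangular fluctuating tableaux from step one. Given such a graph $W$, its trip permutations and separation labels determine a boundary word, and an insertion/growth procedure on this word produces the corresponding fluctuating tableau; the inverse algorithm builds a canonical top fully-reduced representative from the tableau. This is the combinatorial heart of the argument and requires the most care: one must verify well-definedness on equivalence classes, show the inverse lands in the intended class of graphs, and confirm that the "top" condition singles out a unique representative per class.

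Third, for spanning I would invoke the known presentation of the $U_q(\sl_4)$-web category due to Cautis--Kamnitzer--Morrison, translated into the hourglass plabic graph convention via \cite[Section~2.1]{gaetz2023rotation}, and argue that the standard local relations (square removal, double-edge reductions, and benzene-type moves) furnish a confluent rewriting system that reduces an arbitrary $U_q(\sl_4)$-web to a $\C(q)$-linear combination of top fully-reduced hourglass plabic graphs. Once spanning is established, linear independence follows automatically from the matching cardinality secured in step two.

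Finally, rotation invariance is a structural check: the defining conditions (contracted, fully-reduced, top) are preserved under cyclic shift of the boundary once one re-selects the base face, and on the combinatorial side this rotation corresponds to promotion on the associated fluctuating tableau, which is a bijection on the indexing set. The main obstacle, as in essentially every web-basis theorem, will be the spanning step: proving that the reduction process terminates in a top fully-reduced form on every input requires a careful monovariant/normal-form argument within a non-commutative graphical calculus, and converting the cardinality match into a genuine basis statement depends entirely on that technical core.
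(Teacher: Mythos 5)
This statement is Theorem~A of \cite{gaetz2023rotation}, which the paper imports as background and does not prove; there is no internal proof to compare against. Your outline does track the strategy of the cited work as it is summarized in Section~\ref{sec:background}: separation labels and boundary words give a correspondence between move-equivalence classes of hourglass plabic graphs and $4$-row rectangular fluctuating tableaux, a growth algorithm inverts it, rotation of the web corresponds to promotion of the tableau, and a dimension count via the tableau enumeration converts spanning into a basis statement. So as a roadmap your proposal is pointed in the right direction.

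However, it is a roadmap rather than a proof. Every technically hard ingredient is named but deferred: the quantum Schur--Weyl dimension count is asserted without the character computation; the well-definedness of the separation labeling on move-equivalence classes and the existence of a unique ``top'' representative are flagged as ``requiring the most care'' but not carried out; and the spanning step --- exhibiting a confluent, terminating rewriting system that reduces an arbitrary $U_q(\sl_4)$-web to a combination of top fully-reduced hourglass plabic graphs --- is explicitly acknowledged as the main obstacle and left open. That confluence/termination argument is precisely where the content of the theorem lives (it occupies the bulk of the original paper), so the proposal as written does not establish the statement; it identifies correctly what would need to be proved. One smaller caution: linear independence does not follow ``automatically'' from a cardinality match unless you have already verified that the spanning set has cardinality \emph{at most} the dimension, i.e., that distinct top fully-reduced graphs of type $\underline{c}$ yield distinct (indeed linearly independent) invariants --- in the cited work this requires a separate leading-term or unitriangularity argument, which your outline omits.
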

The next section describes the web convention used in  the proof of this theorem. In the case of $r\geq 5,$ a rotation-invariant basis remains elusive. 

\subsection{Hourglass plabic graphs}
\label{subsec:hourglass-plabic-graphs}
Hopkins and Rubey observed that $U_q(\sl_3)$-webs can be viewed as plabic graphs \cite{hopkins2022promotion}. This purely combinatorial method of analyzing webs motivates most of the work in this paper. Just as in the case of webs, plabic graphs can similarly be given edge weights. The motivation behind the object of interest in this section is to interpret these edge weights as special multi-edges called hourglass edges.

\begin{defn}[\protect{\cite[Definition~11.5]{postnikov2006total}}]
    A \emph{plabic graph} is a bicolored, planar, undirected graph embedded in a disk. The boundary vertices, labeled clockwise as $b_1,b_2,...$ have edge weight one.
\end{defn}

The definition of plabic graphs was extended by \cite{gaetz2023rotation} in their diagrammatic interpretation of $U_q(\sl_4)$-webs. Their definition includes that all vertices are $4$-valent and that if there is an edge $e$ with weight $2$ and vertices $v_1,v_2$ in the Fraser-Lam-Le convention, $e$ is replaced by two edges $e_1,e_2$ both with vertices $v_1,v_2.$ Furthermore, this edge is twisted so that $e_1,e_2$ cross exactly once to form an edge called an \emph{hourglass edge.} We call edges with edge weight $1,$ \emph{simple edges}. We note that the authors of \cite{gaetz2023rotation} give a more general definition of hourglass plabic graphs, but this more general notion is not necessary for our purposes. 

\begin{defn}[\protect{\cite[Definition~3.1]{gaetz2023rotation}}]
\label{def:hourglass-plabic-graph}
    A properly bicolored plabic graph with edge weights in $\{1,2\}$ is called an \emph{hourglass plabic graph} if every vertex has degree $4$, every edge with weight $2$ is replaced with an hourglass edge and simple edges remain unchanged.
\end{defn}

We consider such graphs up to planar isotopy. A way to understand hourglass plabic graphs is through a permutation called a trip. Given a plabic graph $W$ with $n$ boundary vertices, a trip permutation $\sigma\in S_n$ is determined by walking along edges of the graph via the rules of the road (Definition \ref{def:trip permutations on hgpgs}) and recording the walk's departure vertex $b_i$ and arrival vertex $b_j$ by setting $\sigma(i)=j.$ These were originally introduced by Postnikov \cite{postnikov2006total} and were generalized to higher trips by \cite{gaetz2023rotation} as in the following definition.
\begin{defn}[\protect{\cite[Definition~2.12]{gaetz2023rotation}}]
\label{def:trip permutations on hgpgs}
Given an hourglass plabic graph with boundary vertices $b_1,\ldots,b_n$ and $1\leq a\leq 3$,
we define a path called the \emph{$a$-th trip}, denoted \emph{$\trip_a(b_i)$}, that begins on the unique edge incident to a boundary vertex $b_i$  and follows the rules of the road:
    \begin{itemize}
        \item $\trip_1$ takes a right at black vertices and a left at white vertices
        \item $\trip_2$ takes the second right at white and black vertices.
        \item $\trip_3$ takes a left at black vertices and a right at white vertices.
    \end{itemize}
    The path $\trip_a(b_i)$  ends at another unique edge incident to a boundary vertex $b_{j}$. The permutation on $\{1,\ldots,n\}$ given by the function $\trip_a(G)(i)=j$ whenever $\trip_a(b_i)$ ends at $b_j$ is called the \emph{$a$-th trip permutation}. 
\end{defn}

Note that by construction $\trip_1$ and $\trip_3$ are inverses of each other and $\trip_2$ is an involution. Also, every simple edge $s$ (not just those edges incident to a boundary vertex), has exactly three trips passing through it from black to white. These are exactly $\trip_1$, $\trip_2$, and $\trip_3$ for that edge. Therefore, it makes sense to consider $\tripstrand_i(s)$ for an arbitrary simple edge $s.$ The trips allow us to compute and associate to each edge a value in $\{1,2,3,4\}$, as in the definition below. Note that
although this does result in a proper coloring, and thus gives a monomial in the corresponding invariant, this coloring should be thought about as something altogether different.

\begin{defn}[\protect{\cite[Definition~4.8]{gaetz2023rotation}}]
\label{def: separation-labelings}
    Given an hourglass plabic graph with boundary vertices $b_1,\ldots,b_n,$ call the face between the edge incident to $b_n$ and the edge incident to $b_1$ the \emph{base face} and label it $F_0.$ Let $\sep(e)$ be the \emph{separation labeling} of the edge $e,$ defined as follows: 
    \begin{itemize}
        \item Let $s$ be a simple edge and let $F(s)$ be the face to the right of $s$ when traveling from the black to white vertex. Then 
        \[
        \sep(s) = \#\{i~|~\tripstrand_i(s) \text{ separates } F(s) \text { from } F_0\}+1.
        \]
        \item Let $h$ be an hourglass edge containing a vertex $v$, then 
        \[
        \sep(h) = \{1,2,3,4\}\setminus\bigcup_{\substack{e\ni v\\ h\neq e}}\sep(e).
        \]
    \end{itemize}
\end{defn}

\begin{example}
\label{ex: separation labels and lattice word}
    The separation labels (Definition \ref{def: separation-labelings}) for the web in Figure \ref{fig:web-to-hourglass-web}, are given on  the hourglass plabic graph on the right.
\end{example}

\cite[Proposition~4.9]{gaetz2023rotation} shows that the separation labeling is well defined and proper for contracted, fully-reduced hourglass plabic graphs. In this paper, the webs considered are naturally contracted and fully-reduced, so we can use the separation labeling without referring to these properties. The separation labeling was instrumental in proving a bijection from equivalence classes of hourglass plabic graphs to fluctuating tableaux. This bijection was pivotal in the proof of Theorem \ref{thm:general-lattice-words}. We discuss a special type of fluctuating tableaux, called oscillating tableaux, in the next section. More on fluctuating tableaux can be found in \cite{gaetz2023promotion}. 

\subsection{Oscillating tableaux and their lattice words}
\label{subsec:oscillating-tableaux}
Fluctuating tableaux are a type of tableau studied in \cite{gaetz2023promotion}, which are defined via a sequence of positive and negative values. See Figure \ref{fig:oscillating-tab} for an example. These are of unique importance to the topic of $U_q(\sl_4)$-webs, since they played an important role in the proof of Theorem \ref{thm:web-basis-GPPSS}.
Every fluctuating tableau is defined uniquely via a Yamanouchi lattice word. 
In this paper, we use only a certain type of fluctuating tableau called an oscillating tableau. For more on fluctuating tableau see \cite[Definition 2.1]{gaetz2023promotion}. In this section we give formal definitions of oscillating tableaux, of lattice words, and we also describe explicitly their connection to webs. 

\begin{defn}[\protect{\cite[Definition~2.1]{gaetz2023promotion}}]
\label{def:oscillating-tab}
    An $r$-row \emph{oscillating tableau} 
    \[
    T = \lambda^0 = \varnothing \xrightarrow{c_1}\lambda^1\xrightarrow{c_2}\cdots\xrightarrow{c_n}\lambda^r
    \]
    of type $\boldc=(c_1,\ldots,c_n)\in\{\pm 1\}^n$ is a sequence of $r$-row generalized partitions such that $\lambda^{i-1}$ and $\lambda^i$ differ by $c_i.$ When $c_i = 1,$ we add a box, whereas when $c_i = -1,$ we remove a box. In the pictorial representation of an oscillating tableaux we represent $\lambda^0$ as single vertical line corresponding to the empty tableau. We often write $-i$ as $\Bar{i}.$
\end{defn}

\begin{figure}[htbp]
\begin{tikzpicture}[scale=0.25, every node/.style={inner sep=0pt, outer sep=0pt}]
    \def\n{7} 
    \def\spacing{2.0cm} 
    \def\center{-0.5*(\n-1)*\spacing} 

    \begin{scope}[scale=3.5, transform shape, xshift=\center + 0*\spacing]
        \node (n) {
            \begin{ytableau}
                \none & \none \\
                \none & \none \\
                \none & \none \\
                \none & \none \\
            \end{ytableau}
        };
        \draw[line width=.7mm,black] ([xshift=.65cm]n.south west)--([xshift=.65cm]n.north west);
    \end{scope}

    \begin{scope}[scale=3.5, transform shape, xshift=\center + 1*\spacing]
        \node (n) {
            \begin{ytableau}
                \none & \scriptstyle 1 \\
                \none & \none \\
                \none & \none \\
                \none & \none \\
            \end{ytableau}
        };
        \draw[line width=.7mm,black] ([xshift=.65cm]n.south west)--([xshift=.65cm]n.north west);
    \end{scope}

    \begin{scope}[scale=3.5, transform shape, xshift=\center + 2*\spacing]
        \node (n) {
            \begin{ytableau}
                \none & \scriptstyle 1 \\
                \none & \none \\
                \none & \none \\
                \scriptstyle \Bar{2} & \none \\
            \end{ytableau}
        };
        \draw[line width=.7mm,black] ([xshift=.65cm]n.south west)--([xshift=.65cm]n.north west);
    \end{scope}

    \begin{scope}[scale=3.5, transform shape, xshift=\center + 3*\spacing]
        \node (n) {
            \begin{ytableau}
                \none & \scriptstyle 1 \\
                \none & \scriptstyle 3 \\
                \none & \none \\
                \scriptstyle \Bar{2} & \none \\
            \end{ytableau}
        };
        \draw[line width=.7mm,black] ([xshift=.65cm]n.south west)--([xshift=.65cm]n.north west);
    \end{scope}

    \begin{scope}[scale=3.5, transform shape, xshift=\center + 4*\spacing]
        \node (n) {
            \begin{ytableau}
                \none & \scriptstyle 1 \\
                \none & \scriptstyle 3\,\Bar{4} \\
                \none & \none \\
                \scriptstyle \Bar{2} & \none \\
            \end{ytableau}
        };
        \draw[line width=.7mm,black] ([xshift=.65cm]n.south west)--([xshift=.65cm]n.north west);
    \end{scope}

    \begin{scope}[scale=3.5, transform shape, xshift=\center + 5*\spacing]
        \node (n) {
            \begin{ytableau}
                \none & \scriptstyle 1 \\
                \none & \scriptstyle 3\,\Bar{4} \\
                \none & \none \\
                \scriptstyle \Bar{2}\,5 & \none \\
            \end{ytableau}
        };
        \draw[line width=.7mm,black] ([xshift=.65cm]n.south west)--([xshift=.65cm]n.north west);
    \end{scope}

    \begin{scope}[scale=3.5, transform shape, xshift=\center + 6*\spacing]
        \node (n) {
            \begin{ytableau}
                \none & \scriptstyle 1\,\Bar{6} \\
                \none & \scriptstyle 3\,\Bar{4} \\
                \none & \none \\
                \scriptstyle \Bar{2}\,5 & \none \\
            \end{ytableau}
        };
        \draw[line width=.7mm,black] ([xshift=.65cm]n.south west)--([xshift=.65cm]n.north west);
    \end{scope}
\end{tikzpicture}

\vspace{0.65cm}

\begin{tikzpicture}
\begin{scope}
    \node at (0,0) {
    $T = 
    \protect\underbrace{0000}_{\lambda^0} 
    \xrightarrow{e_1}
    \protect\underbrace{1000}_{\lambda^1} 
    \xrightarrow{-e_4}
    \protect\underbrace{100\barone}_{\lambda^2}
    \xrightarrow{e_2}
    \protect\underbrace{110\barone}_{\lambda^3}
    \xrightarrow{-e_2}
    \protect\underbrace{100\barone}_{\lambda^4}
    \xrightarrow{e_4}
    \protect\underbrace{1000}_{\lambda^5}
    \xrightarrow{-e_1}
    \protect\underbrace{0000}_{\lambda^6}$
    };
\end{scope}
\end{tikzpicture}
    
\caption{
Here we give an example of Definition \ref{def:oscillating-tab} by building an oscillating tableau on the word $\omega = 1~\Bar{4}~2~\Bar{2}~4~\Bar{1}.$ The pictorial visualization on top represents the oscillating tableau $T$ given underneath.
}
\label{fig:oscillating-tab}
\end{figure}
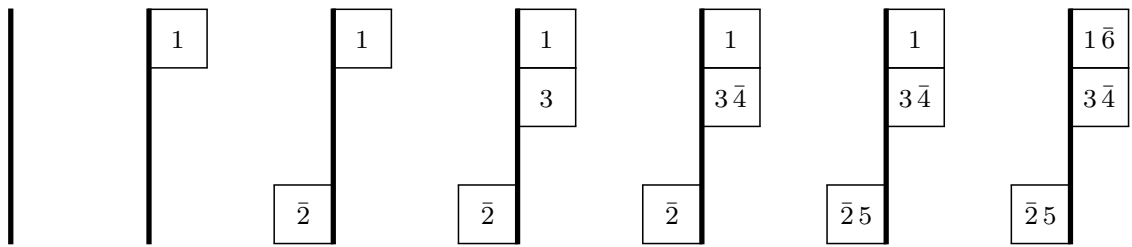

\begin{defn}
\label{def:lattice-word}
    A \emph{lattice word} $\omega = \omega(T) = \omega_1,\ldots,\omega_n\in\{\pm 1,\ldots, \pm r\}$ associated to an $r$-row oscillating tableau $T$ of type $(c_1,\ldots,c_n)$ has $\lambda^i = \lambda^{i-1}+\bolde_{\omega_i}.$ (See Figure \ref{fig:oscillating-tab}.)
\end{defn}

All such lattice words carry a special property called Yamanouchi.
\begin{defn}
    A word $w$ on an alphabet $\{\pm 1,\ldots, \pm r\}$ is called \emph{Yamanouchi} if in any initial subword $w_1\cdots w_k,$ the number of $i$s minus the number of $\overline{i}$s is greater than the number of $(i+1)$s minus the number of $(\overline{i+1})$s for all $i$ in $w_1\cdots w_k.$
\end{defn}

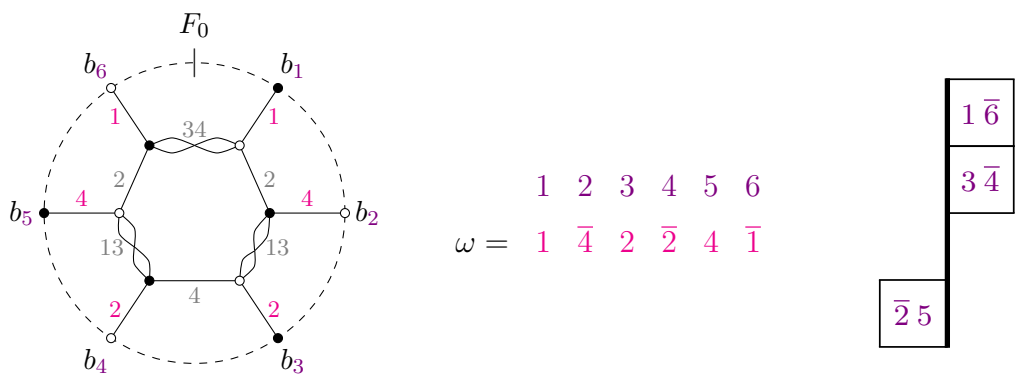
\begin{figure}[ht]
\centering
\begin{tikzpicture}[scale = 0.4]

\draw[dashed] (0,0) circle (5cm);

\node[yshift=.1cm] at (0,6) {\small $F_0$};
\node at (0,5) {\small $|$};

\draw (1.5,2.25) -- (2.5,0); 
\draw (1.5,-2.25) -- (-1.5,-2.25); 
\draw (-2.5,0) -- (-1.5,2.25); 

\draw (2.7735,4.16025) -- (1.5,2.25);
\draw (-2.7735, -4.16025) -- (-1.5,-2.25);
\draw (-2.7735, 4.16025) -- (-1.5,2.25);
\draw (2.7735, -4.16025) -- (1.5,-2.25);
\draw (2.5,0) -- (5,0);
\draw (-2.5,0) -- (-5,0);

\node[xshift = .2cm, yshift= .3cm]   at (2.7735,4.16025)   
{\small $b_{\textcolor{violet}{1}}$};
\node[xshift = .3cm]                 at (5,0)              
{\small $b_{\textcolor{violet}{2}}$};
\node[xshift = .2cm, yshift= -.3cm]  at (2.7735,-4.16025)  
{\small $b_{\textcolor{violet}{3}}$};
\node[xshift = -.2cm, yshift= -.3cm] at (-2.7735,-4.16025) 
{\small $b_{\textcolor{violet}{4}}$};
\node[xshift = -.3cm]                at (-5,0)             
{\small $b_{\textcolor{violet}{5}}$};
\node[xshift = -.2cm, yshift= .3cm]  at (-2.7735,4.16025)  
{\small $b_{\textcolor{violet}{6}}$};

\node[xshift = 0.2cm] at (2.13675,3.205125)     
{\scriptsize \textcolor{magenta}{$1$}};
\node[yshift = 0.2cm] at (3.75,0)               
{\scriptsize \textcolor{magenta}{$4$}};
\node[xshift = 0.2cm] at (2.13675,-3.205125)    
{\scriptsize \textcolor{magenta}{$2$}};
\node[xshift = -0.2cm] at (-2.13675,-3.205125)  
{\scriptsize \textcolor{magenta}{$2$}};
\node[yshift = 0.2cm] at (-3.75,0)              
{\scriptsize \textcolor{magenta}{$4$}};
\node[xshift = -0.2cm] at (-2.13675,3.205125)   
{\scriptsize \textcolor{magenta}{$1$}};

\node[xshift = 0.2cm]  at (2,1.125)   {\scriptsize \textcolor{gray}{$2$}};
\node[yshift = -0.2cm] at (0,-2.25)   {\scriptsize \textcolor{gray}{$4$}};
\node[xshift = -0.2cm] at (-2,1.125)  {\scriptsize \textcolor{gray}{$2$}};

\node[yshift = 0.22cm] at (0,2.25) {
    \scriptsize \textcolor{gray}{$34$}
};
\draw plot [smooth] coordinates {(-1.5,2.25) (-.75,2.5) (0,2.25)};
\draw plot [smooth] coordinates {(0,2.25) (.75,2.5) (1.5,2.25)};
\draw plot [smooth] coordinates {(-1.5,2.25) (-.75,2) (0,2.25)};
\draw plot [smooth] coordinates {(0,2.25) (.75,2) (1.5,2.25)};

\node[xshift = 0.3cm,yshift=-.2] at (2,-1.125) {
    \scriptsize \textcolor{gray}{$13$}
};
\draw plot [smooth] coordinates {(2.5,0) (2.42,-0.762) (2,-1.25)};
\draw plot [smooth] coordinates {(2,-1.26) (1.98,-1.79) (1.5,-2.25)};
\draw plot [smooth] coordinates {(2.5,0) (2,-.6) (2,-1.25)};
\draw plot [smooth] coordinates {(2,-1.26) (1.6,-1.56) (1.5,-2.25)};

\node[xshift = -0.3cm,yshift=-.2] at (-2,-1.125) {
    \scriptsize \textcolor{gray}{$13$}
};
\draw plot [smooth] coordinates { (-1.5,-2.25) (-1.52,-1.6) (-2,-1.125) };
\draw plot [smooth] coordinates { (-2,-1.125) (-1.999,-.5) (-2.5,0) };
\draw plot [smooth] coordinates { (-1.5,-2.25) (-2,-1.7) (-2,-1.125) };
\draw plot [smooth] coordinates { (-2,-1.125) (-2.5,-.69) (-2.5,0) };

\draw[fill=white] (1.5,2.25)   circle (0.15cm);
\draw[fill=black] (2.5,0)      circle (0.15cm);
\draw[fill=white] (1.5,-2.25)  circle (0.15cm);
\draw[fill=black] (-1.5,-2.25) circle (0.15cm);
\draw[fill=white] (-2.5,0)     circle (0.15cm);
\draw[fill=black] (-1.5,2.25)  circle (0.15cm);

\draw[fill=black] (2.7735,4.16025)   circle (0.15cm);
\draw[fill=white] (5,0)              circle (0.15cm);
\draw[fill=black] (2.7735,-4.16025)  circle (0.15cm);
\draw[fill=white] (-2.7735,-4.16025) circle (0.15cm);
\draw[fill=black] (-5,0)             circle (0.15cm);
\draw[fill=white] (-2.7735,4.16025)  circle (0.15cm);


\node[xshift = 5.5cm] at (0,0) {
        $\begin{matrix}
        ~
        & \textcolor{violet}{1} 
        & \textcolor{violet}{2} 
        & \textcolor{violet}{3} 
        & \textcolor{violet}{4} 
        & \textcolor{violet}{5} 
        & \textcolor{violet}{6} \\
        \omega = 
        & \textcolor{magenta}{1} 
        & \textcolor{magenta}{\overline{4}} 
        & \textcolor{magenta}{2} 
        & \textcolor{magenta}{\overline{2}} 
        & \textcolor{magenta}{4} 
        & \textcolor{magenta}{\overline{1}}
    \end{matrix}$
};
\begin{scope}[xshift=35cm, scale=3.5, transform shape, inner sep=0in,outer sep=0in,xshift=-1in]
    \node (n) {
    \begin{ytableau}
        \none & \textcolor{violet}{\scriptstyle 1}\,\textcolor{violet}{\scriptstyle \overline{6}} & \none  \\
        \none & \textcolor{violet}{\scriptstyle 3}\,\textcolor{violet}{\scriptstyle\overline{4}} & \none \\
        \none & \none & \none \\
        \textcolor{violet}{\scriptstyle \overline{2}}\,\textcolor{violet}{\scriptstyle 5} 
        & \none & \none \\
    \end{ytableau}
    };
    \draw[line width=.7mm,black] ([xshift=.65cm]n.south west)--([xshift=.65cm]n.north west);
\end{scope}
\end{tikzpicture}
    \caption{On the left we have an hourglass plabic graph corresponding to the plane partition with a single box. The edge labelings are the separation labels (Definition \ref{def: separation-labelings}). The separation labels on the simple edges incident to the boundary vertices $b_1,\ldots,b_6$ give us a lattice word $\omega,$ shown in the center. On the right, we have the oscillating tableau (Definition \ref{def:oscillating-tab}) corresponding to $\omega.$ The correspondence between the web on the left and the tableau on the right is an important component of \cite[Theorem~B]{gaetz2023rotation}.}
    \label{fig:sep-labels-lattice-word-tableau}
\end{figure}

For the sake of clarity, we outline algorithmically how to construct the diagram of an oscillating tableau from a Yamanouchi lattice word following the definitions in \cite{gaetz2023promotion}. An example can be seen in Figure \ref{fig:sep-labels-lattice-word-tableau}. We call $\mathcal{L}_n$ the set of Yamanouchi lattice words of length $n$.
\begin{algorithm}[Lattice word to oscillating tableaux]~
\label{algo:lattice-fluct-tab}
    \begin{enumerate}
        \item Let $\omega\in\mathcal{L}_n.$ Then $\omega\in\{\pm 1,\pm 2,\pm 3\pm 4\}^n.$ 
        \item Write a multiline array:
        \[
        \binom{1,\ldots, n}{\omega}.
        \]
        This gives pairs $(i,j)$ where $j$ is the $i$-th letter in $\omega.$
        \item Begin with $\lambda^0 = \varnothing$ as in Definition \ref{def:oscillating-tab}.
        \item For $i=1,\ldots,n$ we proceed with the following process:
        \begin{itemize}
            \item Given $i,$ we consider the pair $(i,j)$ from the multiarray, we place $i$ in the $j$-th row of $T.$ If $j$ is negative, denoted $\overline{j},$ then we place $\overline{i}$ in a box in the $j$-th row.
            \item Now we determine the particular box in which to place $i$ or $\overline{i}.$
            \begin{itemize}
                \item \textbf{Case 1:} Place $\overline{i}$ in the furthest right box containing a positive entry $i_0,$ otherwise construct a box in the first open position to the left of $\lambda^0$ and insert the content $\overline{i}.$
                \item \textbf{Case 2:} Place $i$ in the furthest left box containing a negative entry $\overline{i_0},$ otherwise construct a box in the first open position to the right of $\lambda^0$.
            \end{itemize}
        \end{itemize}
    \end{enumerate}
\end{algorithm} 

Recall that a boundary word of a web is given by the separation labels of the $n$ edges incident to the $n$ vertices along the boundary of the disk on which the web is embedded. It was shown in \cite[Theorem~4.13]{gaetz2023rotation} that a boundary word of a contracted fully-reduced hourglass plabic graph is a Yamanouchi lattice word (Definition \ref{def:lattice-word}), thereby associating to each boundary word a fluctuating tableau. For $U_q(\sl_4)$-webs, the possible separation labels come from the set $\{1,2,3,4\}.$ Let $e_i$ be an edge with separation label $j$ incident to boundary vertex $b_i.$ If $b_i$ is white, then the corresponding letter $w_i$ is negative and is written $\overline{j}.$ If $b_j$ is black, then $w_i$ is positive and equals $j$.

Since the lattice word for an oscillating tableau is also the boundary word given by the separation labelings of a web, by classifying webs according to their boundary word, we are associating a particular oscillating tableau to each class of webs. Thus the map from an $U_q(\sl_4)$-web to an oscillating tableau is given by Algorithm \ref{algo:lattice-fluct-tab}. In Figure \ref{fig:sep-labels-lattice-word-tableau} we give an example of how to map a web to an oscillating tableau. 
Since the map from webs to oscillating tableaux is dependent only on the boundary word corresponding to a web, there is a noticeable loss of information. Any two distinct webs $W,W'$ with the same boundary word are mapped to the same oscillating tableau. 
Thus, the map from $U_q(\sl_4)$-webs to oscillating tableaux is not immediately reversible since we would not know how to construct the internal structure of the web with the information given up till this point. In order to produce a web in the equivalence class of a boundary word, one may use the growth algorithm detailed for $U_q(\sl_4)$-webs given by hourglass plabic graphs in \cite[Algorithm~5.1]{gaetz2023rotation}. Growth rules for for $U_q(\sl_3)$-webs were given in \cite{kuperberg,petersen2009promotion}. Finally, we describe the growth rules for $U_q(\sl_2)$-webs in Definition \ref{def:growth-rules-basic}, which will be useful in Section \ref{sec:A combinatorial map between invariant spaces} for mapping certain $U_q(\sl_4)$-webs to $U_q(\sl_2)$-webs.

\begin{defn}
\label{def:growth-rules-basic}
    Let $T$ be a $2$-row oscillating tableau with $n$ entries, then for each entry in $T$ we place a vertex on a horizontal line with a half-edge going downward. For each letter $i$ we do the following:
    \begin{itemize}
        \item if $i$ is barred, then the corresponding vertex is white. Otherwise, it is black;
        \item if $i$ is in row $1$ give the half-edge label $1.$ If $i$ is in row $2,$ give the half-edge label $2.$
    \end{itemize}
    Then use the following growth rules to connect the half-edges:
    \begin{figure*}[htb]
    \label{fig:growth-rules-basic}
        \centering
\begin{tikzpicture}[scale = 0.5]

\coordinate (1) at (-14,0);
\coordinate (2) at (-10,0);
\coordinate (3) at (-6,0);
\coordinate (4) at (-2,0);
\coordinate (5) at (2,0);
\coordinate (6) at (6,0);
\coordinate (7) at (10,0);
\coordinate (8) at (14,0);

\draw (1) -- (-12,-2.5) -- (2);
\draw[fill=white] (-12,-2.5) circle (0.15cm);
\node[xshift = -0.3cm] at (-13,-1.25)    {$1$};
\node[xshift = 0.3cm]  at (-11,-1.25)    {$2$};
\node[yshift = -1cm]   at (-12,-2.5) {\small Growth Rule 1};

\draw plot [smooth] coordinates {(3) (-4,-2.5) (4)};
\node[xshift = -0.4cm] at (-5,-1.25)    {$2$};
\node[xshift = 0.4cm]  at (-3,-1.25)    {$2$};
\node[yshift = -1cm]   at (-4,-2.5)     {\small Growth Rule 2};

\draw (5) -- (4,-2.5) -- (6);
\draw[fill=black] (4,-2.5) circle (0.15cm);
\node[xshift = -0.3cm] at (3,-1.25)    {$2$};
\node[xshift = 0.3cm]  at (5,-1.25)    {$1$};
\node[yshift = -1cm]   at (4,-2.5)     {\small Growth Rule 3};

\draw plot [smooth] coordinates {(7) (12,-2.5) (8)};
\node[xshift = -0.4cm] at (11,-1.25)    {$1$};
\node[xshift = 0.4cm]  at (13,-1.25)    {$1$};
\node[yshift = -1cm]   at (12,-2.5)     {\small Growth Rule 4};

\foreach \i in {1,2,4,7} {
    \draw[fill=black] (\i) circle (0.15cm);   
}

\foreach \i in {3,5,6,8} {
    \draw[fill=white] (\i) circle (0.15cm);   
}
\end{tikzpicture}  
\end{figure*}
\end{defn}

\begin{remark}
\label{rmk: a2-spiders}
    Definition \ref{def:growth-rules-basic} gives a crystallographic definition for constructing a $U_q(\sl_2)$-web. The representation theories from Rumer, Teller, and Weyl \cite{weyl1932valenztheorie}, and developed by Temperley and Lieb \cite{temperley2004relations}, were shown by Kuperberg to be isomorphic to certain $U_q(\sl_2)$-webs called ``spiders" \cite{kuperberg}. The definition given here, although not known to be explicitly stated elsewhere, can be naturally inferred from the work of Kuperberg.
\end{remark}

\subsection{Symmetry classes of plane partitions}
\label{subsec: symmetry-classes-of-pp}
We now return to the combinatorial object which sparked our interest in the particular class of webs studied in this paper. A \emph{plane partition} $p$ is a set of stacked unit cubes given by the following requirement: 
if the outermost corner of a unit cube is at the positive integer lattice point $(n,m,\ell)\in\mathbb{N}^3,$ then there is also a unit cube at all positions $(i,j,k)$ with $1\leq i\leq n,$ $1\leq j\leq m,$ and $1\leq k\leq \ell.$  It is natural to consider plane partitions to be restricted to an $a\times b\times c$ box so that $i\leq a, j\leq b, k\leq c$. We denote the set of plane partitions in an $a\times b\times c$ box as $\pp(a,b,c).$

Originally introduced and studied by MacMahon in \cite{macmahon1899partitions,macmahon2001combinatory}, he both conjectured and proved that the number of plane partitions inside an $a\times b\times c$ box is given by:
\begin{equation*}
    \#\pp(a,b,c) = \prod_{i=1}^a\prod_{j=1}^b\frac{i+j+c-1}{i+j-1}.
\end{equation*}

He went on to characterize six symmetry classes of plane partitions and suggest enumeration formulas for them. Richard Stanley described four more symmetry classes. In total, there are ten symmetry classes of plane partitions. All ten are described in \cite{Stanley-symmetries} where he gives many of their counting formulas. 

Of the $10$ distinct symmetry classes, we will be concerned with the four symmetry classes that are included in the symmetry class with the most restrictions. We now list them from least restrictive to most restrictive:

\begin{enumerate}
    \item Symmetric (SPP)
    \item Cyclically symmetric (CSPP)
    \item Totally symmetric (TSPP)
    \item Totally symmetric and self-complementary (TSSCPP)
\end{enumerate}

The enumeration formula for the number of symmetric partitions was conjectured by MacMahon \cite{macmahon1899partitions} and proven by Andrews \cite{andrews1978plane} and Macdonald \cite{macdonald1979symmetric}. The enumeration of cyclically symmetric plane partitions was originally conjectured by Macdonald \cite{macdonald1979symmetric} and proven by Andrews \cite{andrews1979plane}. The enumeration of totally symmetric plane partitions was originally conjectured by Macdonald \cite{macdonald1979symmetric} and proven by Stembridge \cite{stembridge1995enumeration}. Finally, the enumeration of totally symmetric self-complementary plane partitions was first observed by Robbins and proven by Andrews \cite{andrews1994plane}. For the exact formulas see \cite{kuperberg2002symmetry}.

It is standard to denote the number of 
cubes with first two coordinates $(i,j)$
by $p_{ij}.$ This is sometimes called \emph{matrix notation.} We give an example of a plane partition of each symmetry class written in matrix notation in Figure~\ref{fig:symmetry-classes-matrix-form}.

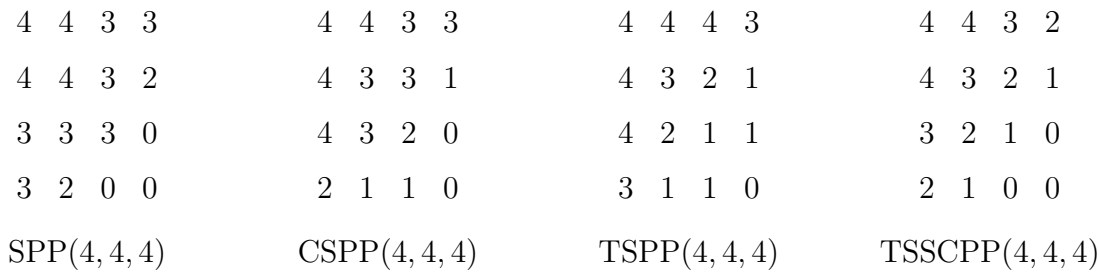
\begin{figure}[htbp]
    \centering
    \begin{tikzpicture}
        \node at (-6,0) {
        $
        \begin{matrix}
            4 & 4 & 3 & 3 \\
            4 & 4 & 3 & 2 \\
            3 & 3 & 3 & 0 \\
            3 & 2 & 0 & 0
        \end{matrix}
        $
        };
        \node at (-6,-2) { $\spp(4,4,4)$};
        \node at (-2,0) {
        $
        \begin{matrix}
        4 & 4 & 3 & 3 \\
        4 & 3 & 3 & 1 \\
        4 & 3 & 2 & 0 \\
        2 & 1 & 1 & 0
        \end{matrix}
        $
        }; 
        \node at (-2,-2) { $\cspp(4,4,4)$};
        \node at (2,0) {
        $
        \begin{matrix}
        4 & 4 & 4 & 3 \\
        4 & 3 & 2 & 1 \\
        4 & 2 & 1 & 1 \\
        3 & 1 & 1 & 0
        \end{matrix}
        $
        };
        \node at (2,-2) { $\tspp(4,4,4)$};
        \node at (6,0) {
        $
        \begin{matrix}
        4 & 4 & 3 & 2 \\
        4 & 3 & 2 & 1 \\
        3 & 2 & 1 & 0 \\
        2 & 1 & 0 & 0
        \end{matrix}
        $
        };    
        \node at (6,-2) { $\tsscpp(4,4,4)$};
    \end{tikzpicture}

    \caption{
        The matrix notation for the plane partitions in Figure \ref{fig:symmetry-classes-example}.
    } 
    \label{fig:symmetry-classes-matrix-form}
\end{figure}

\begin{example}
     The definitions of the symmetries in Figure \ref{fig:symm-class-def} can be compared to the examples in Figure \ref{fig:symmetry-classes-matrix-form}. For the case of $\spp,$ if we draw a diagonal along $p_{i,i},$ then $p_{i,j}=p_{j,i}$ for all $i,j$ ($p_{1,3}=p_{3,1}=3$), thus it is symmetric along the diagonal. In the case of $\cspp,$ the $i$-th row is conjugate (in the sense of integer partitions) to the $i$th column for all $i.$ For example, in row $1$ we have $(4,4,3,3)$ which is conjugate to column $1,$ given by $(4,4,4,2)$. In the case of $\tspp,$ $p$ is both symmetric across the diagonal and the $i$th row is conjugate to the $i$th column (row $1$ is $(4,4,4,3)$ which is equal to column $1$). Finally, in the case of $\tsscpp,$ $p$ is symmetric along the diagonal, it is cyclically symmetric, and it is self-complementary since $p_{i,j}+p_{a-i+1,b-j+1} = c$ for all $1\leq i\leq a,1\leq j\leq b$ ($p_{1,2}+p_{4-2+1,4-1+1} = p_{1,2}+p_{3,4}=4+0=4).$ Thus, it indeed belongs in $\tsscpp(4,4,4).$
\end{example}

Each symmetry class has requirements on the shape of the $a\times b\times c$ box. These requirements, along with the definitions for each symmetry class, is given in the chart of Figure \ref{fig:symm-class-def}.

\begin{figure}[ht]
\begin{center}
\begin{tabular}{|l|l| } 
\hline
\rowcolor[HTML]{E6E6E6} Symmetry Class & Definition \\
\hline
$\spp(a,a,c)$       & $p_{i,j}=p_{j,i}$ for all $i,j$     \\ 
$\cspp(a,a,a)$      & The $i$th row of $p$ is conjugate to the $i$th column for all $i$\\ 
$\tspp(a,a,a)$      & Both symmetric and cyclically symmetric  \\ 
$\scpp(a,b,c)$      & $p_{i,j}+p_{a-i+1,b-j+1} = c$ for all $1\leq i\leq a,1\leq j\leq b.$ \\
$\tsscpp(2d,2d,2d)$ & Both totally symmetric and self-complementary\\

\hline
\end{tabular}
\end{center}
    \caption{Definitions of select symmetry classes of plane partitions}
    \label{fig:symm-class-def}
\end{figure}

The symmetry classes can be described as follows:
\begin{itemize}
    \item A $\spp$ is invariant under reflection along the diagonal;
    \item A $\cspp$ is invariant under $120^\circ$ rotations;
    \item A $\tspp$ is invariant under $120^\circ$ or $240^\circ$ rotations;
    \item A $\tsscpp$ is invariant under reflections and $120^\circ$ rotations, so it has dihedral symmetries. 
\end{itemize}

Instead of considering the entire plane partition of each symmetry class, it is often useful to restrict to a minimal portion of the plane partition from which the full object can be generated by applying the appropriate symmetry operation. 
We require that the restriction be such that it avoids redundancy by representing each symmetry equivalence class exactly once. 
This restricted portion called a \emph{fundamental domain}. 

In Figure \ref{fig:symm-class-&-fundamental-domain}, we have described each symmetry class of interest in this paper, its symmetry group, and its fundamental domain in terms of the ambient box, which can be viewed as a hexagon.

\begin{figure}[ht]
\begin{center}
\begin{tabular}{|l|l|l| } 
\hline
\rowcolor[HTML]{E6E6E6} Symmetry Class & Symmetry Group & Fundamental Domain  \\
\hline
$\spp(a,a,c)$       &  $\Z_2$     & $1/2$  of the hexagon\\ 
$\cspp(a,a,a)$      &  $C_3$      & $1/3$  of the hexagon\\ 
$\tspp(a,a,a)$      &  $S_3$      & $1/6$  of the hexagon\\ 
$\tsscpp(2d,2d,2d)$ &  $D_{12}$   & $1/12$ of the hexagon\\

\hline
\end{tabular}
\end{center}
    \caption{Symmetry classes of plane partitions, their symmetry group, and their fundamental domain}
    \label{fig:symm-class-&-fundamental-domain}
\end{figure}

We have given an example of these fundamental domains in Figure \ref{fig:symmetry-classes-example}.

\section{Plane Partitions as webs}
\label{sec: Lattice words of symmetry classes}
In this section we explore the bijection between plane partitions and certain $U_q(\sl_4)$-webs. The bijection is explicitly stated in \cite{gaetz2023rotation} but was known in terms of dimer covers (perfect matchings) of graphs with edges of valance $3.$ In the perfect matching, the hourglass edges correspond to the included edge of the dimer cover \cite{kenyon2005}.
In Subsection \ref{subsec:hourglass-plabic-graphs and -pp} we give labels to the faces and boundaries of plane partitions. We also describe the paths of the trips for any web given by a plane partition. In Subsection \ref{subsec: A description of fundamental domains} we describe the fundamental domains of certain symmetry classes of plane partitions and label the boundaries corresponding to these domains. We then give a decomposition of the lattice words corresponding to each boundary. This section will be crucial in the proof of Theorem \ref{thm: lattice-words-symmetry-classes}. Finally, in Subsection \ref{subsec: A description of hourglass edges along boundaries} we describe how to handle hourglass edges that are intersected by a boundary of a fundamental domain.

\subsection{A description of hourglass plabic graphs on plane partitions}
\label{subsec:hourglass-plabic-graphs and -pp}

Given a unit cube in an arbitrary plane partition $p\in\pp(a,b,c),$ the bijection to $U_q(\sl_4)$-webs is well-understood in the sense that, for each visible face and for each visible edge of the unit cube inside $p,$ we can write down the precise edge type in the corresponding web. The unit cube has at most $3$ visible faces which we will call (1) the north face, (2) the east face, and (3) the west face, as shown in Figure \ref{fig: cube-web-bijection}.  Each of these faces corresponds to an hourglass edge with the white vertex to the right while each edge in the unit cube corresponds to a simple edge in the web which connects the hourglass edges on the faces.

\begin{figure}[ht]
    \centering
    \includegraphics[scale=0.3]{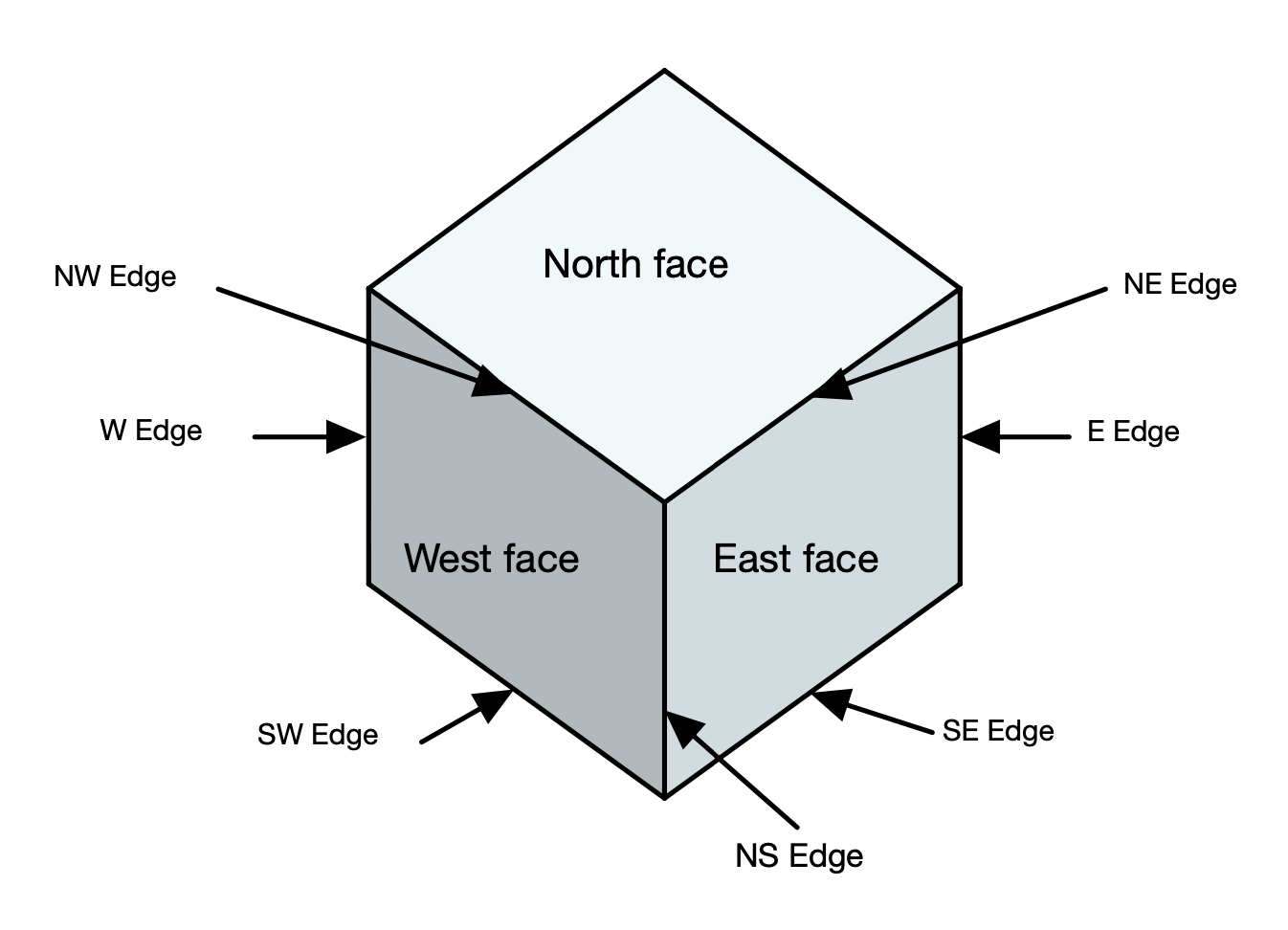}
    \includegraphics[scale=0.3]{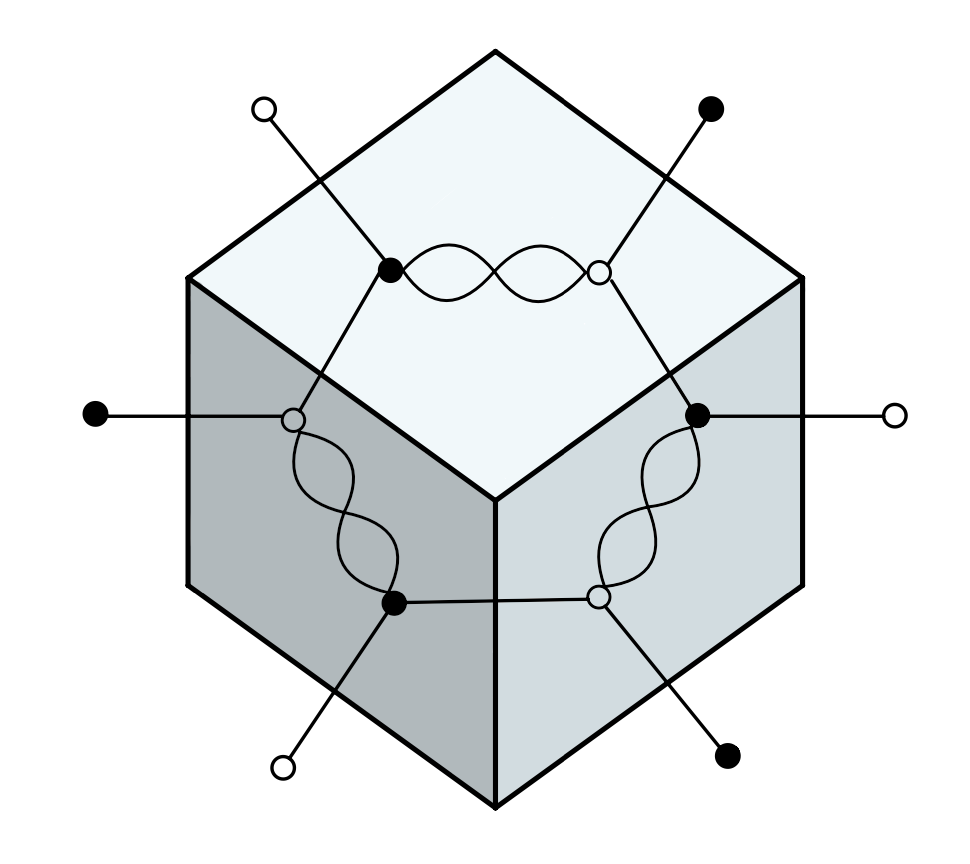}
    \caption{Face labelings of unit cube}
    \label{fig: cube-web-bijection}
\end{figure}

We use these labels to expand on the proof of \cite[Prop.~5.5]{gaetz2023rotation}, which was restated in this paper in Theorem \ref{thm:general-lattice-words}. The trips lie on very predictable paths, which is due to our understanding of how a benzene move affects a trip.

\begin{lemma}
\label{lemma: benzene-moves-and-trips}
Given a hexagon face of a $U_q(\sl_4)$-web and an spoke edge $e,$ (i.e., an edge $e=\{v_1,v_2\}$ such that $v_1$ is incident to the hexagon face and $v_2$ is not) the application of a benzene move on the face alters the $\trip_i(e)$ for $i=1,2,3$ in the following manners:
\begin{itemize}
    \item $\trip_1(e)$ and $\trip_3(e)$ are invariant under the benzene move on the hexagon face.
    \item The entry and exit edges for $\trip_2$ are unchanged under a benzene move. However, the path around the hexagon face alternates with each benzene move applied to the face.
\end{itemize}
\end{lemma}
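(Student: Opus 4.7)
The plan is to carry out a local case analysis of how trips behave at the six vertices of the hexagonal face $F$. I will label the hexagon vertices $v_1, \ldots, v_6$ cyclically (with alternating colors), the spokes $s_1, \ldots, s_6$, and the hexagon edges $e_{k,k+1}$. At each hexagon vertex $v_k$, the four strand endpoints have a fixed cyclic order $(s_k, X_k, Y_k)$, where $X_k$ and $Y_k$ are the clockwise- and counterclockwise-adjacent hexagon edges, each contributing exactly one strand endpoint if simple and two adjacent strand endpoints if hourglass. The benzene move swaps which of $X_k, Y_k$ is hourglass without altering their cyclic positions or the spoke. A key preliminary observation is that for an hourglass edge $e$ between vertices $u$ and $w$, the crossing pairs the first hourglass strand of $e$ in the clockwise cyclic order at $u$ with the first hourglass strand of $e$ in the clockwise cyclic order at $w$; this follows from a direct geometric check using the orientation of the strands relative to the center line of $e$.

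For $\trip_1$ and $\trip_3$, I will show that the sequence of hexagon edges traversed does not depend on the benzene configuration. The idea is that the first-right-at-black / first-left-at-white rule (and its dual for $\trip_3$) selects the \emph{next position in the cyclic order}, and this adjacent cyclic position always corresponds to the same underlying hexagon edge in both configurations, regardless of whether that edge is simple or hourglass. Hourglass crossings do permute strand labels between vertices, but the subsequent rule still picks the same outgoing hexagon edge at the arrival vertex because the clockwise/counterclockwise neighbor of the incoming edge in the cyclic order is fixed. Iterating this around the hexagon, the full path of $\trip_1(e)$ (resp.\ $\trip_3(e)$) is unchanged, establishing the first bullet.

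For $\trip_2$, the rule ``second right at every vertex'' is equivalent to exiting via the strand endpoint diametrically opposite the incoming one in the cyclic order of size four. Since the hourglass placement determines which of $X_k$ and $Y_k$ contains the opposite position, the first hexagon edge chosen at $v_k$ genuinely differs between the two benzene configurations: in one, $\trip_2$ departs along the clockwise hexagon edge from $v_k$, and in the other along the counterclockwise one. I then trace $\trip_2$ through each subsequent vertex and verify, using the crossing pairing noted above, that in both configurations the trip traverses exactly three hexagon edges (the ``clockwise half'' in one case, the ``counterclockwise half'' in the other) before the opposite-edge rule at the third interior vertex produces $s_{k+3 \pmod 6}$, the spoke opposite $s_k$. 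This simultaneously proves that the entry and exit spokes are unchanged and that the internal path alternates between the two halves of the hexagon with each benzene move.

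The principal technical obstacle is bookkeeping for strand crossings at hourglass edges and ensuring the cyclic-position analysis is carried out consistently across vertex colors and both benzene configurations; once the ``first pairs with first'' crossing rule is established, the remaining verification is a direct computation at each of the six vertices.
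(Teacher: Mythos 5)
Your proposal is correct and is essentially the paper's own argument made explicit: the paper proves this lemma entirely by the local picture in Figure~\ref{fig: benzene-trip-routes}, which is precisely the vertex-by-vertex trace of the three trips through the two benzene configurations that you describe symbolically via cyclic orders of strand endpoints and the hourglass crossing pairing. The only difference is presentational rigor --- your ``first pairs with first'' crossing rule and the opposite-position characterization of $\trip_2$ are exactly the facts the figure encodes visually --- so no new ideas are needed, though you should make sure the deferred six-vertex computation rules out the potential U-turn when $\trip_1$ or $\trip_3$ arrives at a vertex along an hourglass strand.
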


\begin{proof}
     See Figure \ref{fig: benzene-trip-routes}.
\end{proof}

\begin{figure}[ht]
    \centering
        \includegraphics[scale=0.25]{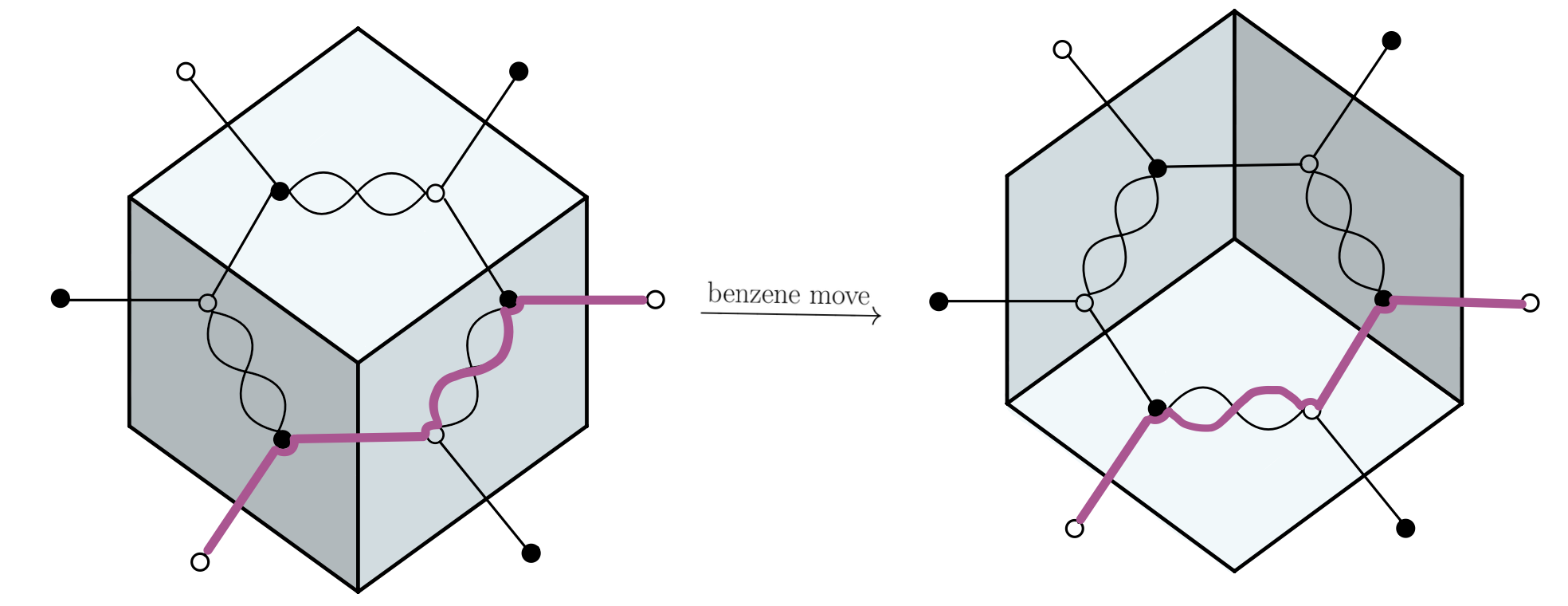}
        \includegraphics[scale=0.2]{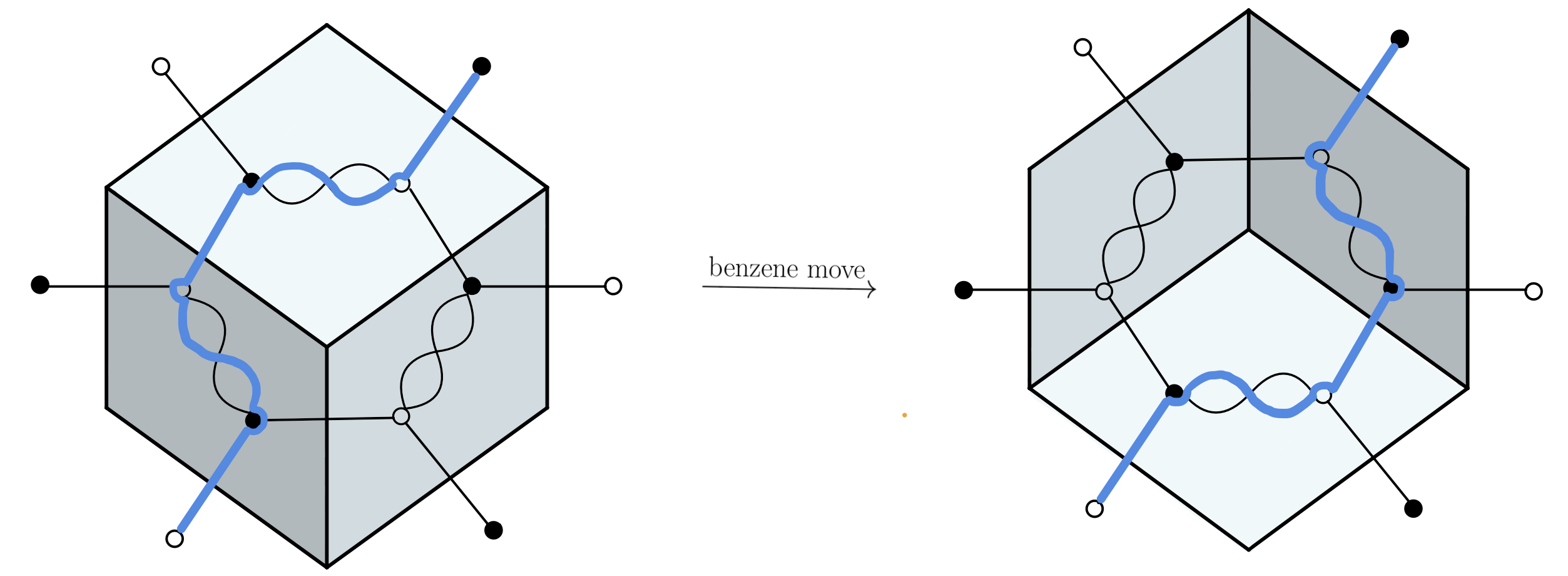}
    \caption{$\trip$ routes through a benzene move: $\trip_1$ and $\trip_3$ are pictured above in magenta and $\trip_2$ is pictured below in blue.}
    \label{fig: benzene-trip-routes}
\end{figure}

Now that we have described the local behaviour of trips under a benzene move, we can describe the general paths the trips of webs in bijection with plane partitions take. We give an explicit description of Lemma \ref{lem: global-trip-routes} in Figure \ref{fig: global-trip-routes}.

\begin{lemma}
\label{lem: global-trip-routes}
    Given a $U_q(\sl_4)$-web $W$ that is in bijection with a plane partition $p\in\pp(a,b,c),$ then
    \begin{itemize}
        \item $\trip_2$ beginning on an edge incident to a vertex on boundary $\b_\bullet$ ends on an edge incident to a vertex on the boundary opposite to $\b_\boxdot,$ where $\b_\boxdot$ is three boundaries from $\b_\bullet$ when reading clockwise.
        \item $\trip_1$ beginning on an edge incident to a vertex on boundary $\b_\bullet$ end on an edge incident to a vertex on $\b_\boxdot,$ where $\b_\boxdot$ is two boundaries from $\b_\bullet$ when reading clockwise. 
        \item $\trip_3$ is the reversal of $\trip_1$. 
    \end{itemize}
\end{lemma}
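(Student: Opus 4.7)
The plan is to reduce to a single convenient representative in the benzene equivalence class and then verify the three bullets directly. By Theorem~\ref{thm:general-lattice-words}, the set $\pp(a,b,c)$ corresponds to a single benzene equivalence class of $U_q(\sl_4)$-webs, and by Lemma~\ref{lemma: benzene-moves-and-trips} both endpoints of $\trip_1$, $\trip_2$, and $\trip_3$ are invariant under every benzene move. Therefore the boundary arc on which each trip lands is constant across the entire equivalence class, so it suffices to verify the three bullets for one representative.

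A natural choice is the web $W_0$ in bijection with the empty plane partition (equivalently, the full $a\times b\times c$ box). This web has a rigidly regular structure: its hexagonal faces form a honeycomb projecting onto the hexagonal base of the box, the hourglass edges sit on the visible faces of the cubes, and the six boundary arcs $\b_{\bullet}$ align with the six sides of the hexagon and with the blocks of the boundary word $1^a\,\bar{4}^c\,2^b\,\bar{1}^{|a-b|}\,\bar{2}^b\,4^c\,\bar{1}^a$ from Theorem~\ref{thm:general-lattice-words}. Each boundary vertex is attached by a single spoke simple edge to one outer-layer hexagonal face, so the starting step of each trip is unambiguous.

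I would then trace the trips inductively across successive hexagonal faces. At each internal vertex, the rules of the road (Definition~\ref{def:trip permutations on hgpgs}) together with the uniform hourglass pattern of $W_0$ force $\trip_1$ to rotate by a constant angle at every step, sending it exactly two boundary arcs clockwise; $\trip_2$, by taking the second right at every vertex, is funneled through hourglass edges onto the antipodal arc; and the reversal statement for $\trip_3$ is then a direct consequence of the observation, recorded just after Definition~\ref{def:trip permutations on hgpgs}, that $\trip_1$ and $\trip_3$ are inverses. The main obstacle I expect is formalising this ``uniform turning'' claim without degenerating into a picture-by-picture case check. The cleanest approach is to isolate the local behaviour at a single interior hexagonal face as a small combinatorial gadget recording which spokes each trip enters and exits, verify this gadget once, and then concatenate along the trip; only the irregular $\bar{1}^{|a-b|}$ arc appearing when $a\neq b$ will require a short, separate end-case argument.
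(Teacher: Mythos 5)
Your proposal is correct and follows essentially the same route as the paper: the paper likewise uses Lemma~\ref{lemma: benzene-moves-and-trips} to argue that benzene moves do not disturb the overall trajectory of the trips (so that a single regular representative suffices) and then traces each trip through the predictable pattern of north, east, and west cube faces, concluding the $\trip_3$ case from the fact that $\trip_1$ and $\trip_3$ are inverses. One small correction: the web of the empty plane partition and the web of the full $a\times b\times c$ box are distinct webs differing by many benzene moves, so they are not ``equivalent'' as you parenthetically state, though either one serves equally well as the representative on which to verify the three bullets.
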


\begin{proof}
    Say $\tripstrand_1$ begins on the northeast boundary walking from a black vertex to a white vertex. The trip immediately takes a left and continues south until the trip encounters a north face of a cube. It then takes a left until it reaches another north face where it will take a right. For each north face $\tripstrand_1$ encounters (whether or not they share an edge or are separated by a west face) it alternates between taking a left or taking a right. Thus, $\trip_1$ will always terminate at the southeast boundary when it begins on the northeast boundary. 

    If $\trip_1$ begins on the eastern boundary, then it will walk along the west faces until they encounter an east face at which time they will alternate between taking rights and lefts at each east face $\trip_1$ encounters, thereby terminating at the northwest boundary. 

    If $\trip_1$ begins on the southeast boundary, it will walk straight across any north face until it reaches an east face, at which point, it will alternate between taking left and right at each east face it encounters until reaching the eastern boundary. For $\trip_1$s beginning on the northwest, west, or southwest boundaries, we simply reflect the paths just examined. Furthermore, since $\trip_1$ and $\trip_3$ are inverses, we also understand $\trip_3$ by reversing the steps and reading counterclockwise instead of clockwise. These paths have been drawn in pink on the right example of an empty plane partition in Figure \ref{fig: 3x3x3-empty-pp trip descriptions}.

    Finally, $\trip_2$ is special in that it walks directly along the faces and terminates on the opposite boundary. For example, if it begins on the northeast boundary, then it will terminate on the southwest boundary as demonstrated in purple lines on the left example of an empty plane partition in Figure \ref{fig: 3x3x3-empty-pp trip descriptions}. We will use these understanding of the trips in the proof of Theorem \ref{thm: lattice-words-symmetry-classes}.
\end{proof}

\begin{figure}[ht]
\begin{center}
\begin{tabular}{|l|l|l| } 
\hline
\rowcolor[HTML]{E6E6E6} $\trip_i$ & (Beginning $\b_\bullet$, Culminating $\b_\boxdot$)  \\
\hline
$i = 1$    &  $(\b_{NE},\b_{SW}),(\b_{E},\b_W),(\b_{SE},\b_{NW}),(\b_{SW},\b_{NE}),(\b_W,\b_E),(\b_{NW},\b_{SE})$     \\ 
$i = 2$    &   $(\b_{NE},\b_{SE}),(\b_E,\b_{SW}),(\b_{SE},\b_W),(\b_{SW},\b_{NW}),(\b_W,\b_{NE}),(\b_{NW},\b_{E})$     \\ 
$i = 3$    &  Reverse the pairs in the $\trip_1$ case. \\

\hline
\end{tabular}
\end{center}
    \caption{The pairs of boundaries $(\b_\bullet,\b_\boxdot)$ where if an edge is incident to a vertex on $\b_\bullet,$ then $\trip_i$ end on $\b_\boxdot.$ }
    \label{fig: global-trip-routes}
\end{figure}

In Figure \ref{fig: 3x3x3-empty-pp trip descriptions} we draw these paths without their underlying webs. 
By Lemma \ref{lemma: benzene-moves-and-trips}, a benzene move ultimately does not disturb the overall trajectory of the trip, which is why it is sufficient to demonstrate the trip paths in an empty $3\times 3\times 3$ box.

\begin{figure}[ht]
    \centering
    \includegraphics[scale=0.45]{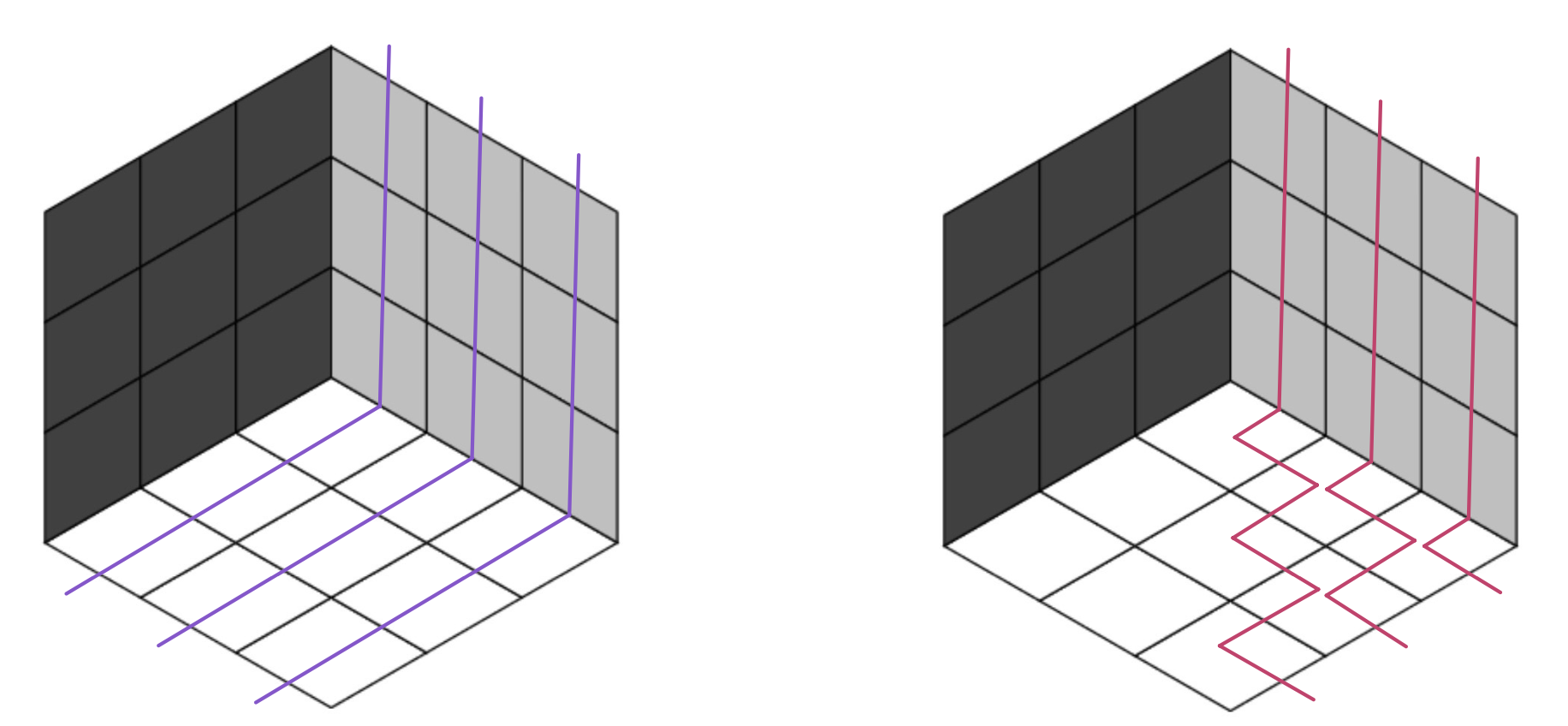}
    \caption{Here we have the empty plane partition in a $3\times 3\times 3$ box. On the left we demonstrate the path $\trip_2$ in purple with the webs removed. It begins on the northeast boundary and ends at the southwest boundary. On the right, in pink, we have the trajectory of $\trip_1,$ also with the webs removed. It begins on the northeast boundary and ends on the southeast boundary. Since $\trip_1$ and $\trip_3$ are involutions, then we also have $\trip_3$ beginning on the southeast boundary and ending on the northeast boundary.}
    \label{fig: 3x3x3-empty-pp trip descriptions}
\end{figure}

The following proposition, which will be important to the proof of Theorem \ref{thm: lattice-words-symmetry-classes}, follows immediately from the description of trips in this section.

\begin{prop}
\label{prop: restriction preserves words}  
    Given a web $W$ that is in bijection with a plane partition $p$, the separation labels for edges in $W$ remain unchanged when the web is restricted to a subweb containing the fixed base face.
\end{prop}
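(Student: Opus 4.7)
The plan is to reduce the statement to simple edges and then argue that the defining count $\#\{i : \trip_i(s) \text{ separates } F(s) \text{ from } F_0\}$ is preserved under restriction to any subweb containing $F_0$.

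First I would observe that for an hourglass edge $h$ at a vertex $v$, the separation label is defined as $\{1,2,3,4\} \setminus \bigcup_{e\neq h,\, e\ni v} \sep(e)$, a formula that is purely local to $v$. Since $v$ together with its four incident edges is retained in the subweb, it suffices to prove invariance for simple edges.

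For a simple edge $s$ in the restricted subweb, the trip $\trip_i(s)$ is determined by the local rules of the road from Definition~\ref{def:trip permutations on hgpgs} at each vertex it visits. Because the subweb agrees with the full web at every internal vertex it contains, the portion of $\trip_i(s)$ lying within the subweb is identical in both contexts; restriction only possibly truncates the trip where it crosses a new boundary arising from a cut edge. I would then argue, using Lemma~\ref{lem: global-trip-routes}, that whether $\trip_i(s)$ separates $F(s)$ from $F_0$ is determined by the portion of the trip between $s$ and $F_0$. Since both $F(s)$ and $F_0$ lie in the subweb, and the trip segment realizing (or failing to realize) this separation lies entirely within the subweb, the separation count for each simple edge is unchanged.

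The main obstacle will be the topological bookkeeping: when a trip is truncated at a new boundary vertex of the subweb, one must verify that the truncated path, combined with the boundary arc of the restricted disk, still places $F(s)$ and $F_0$ on the same two sides as in the full disk. Using the explicit directions of travel catalogued in Figure~\ref{fig: global-trip-routes}, together with the fact that the base face $F_0$ remains on the boundary of the subweb, this separation-preservation can be verified by a straightforward case analysis on which of the six hexagon boundaries the subweb cuts through. Once this is established for simple edges, the hourglass edge labels follow from the local complement formula, completing the proof.
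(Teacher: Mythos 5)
Your proposal is correct and takes essentially the same route as the paper, which likewise deduces the result from the complete description of trip behaviour in Lemmas \ref{lemma: benzene-moves-and-trips} and \ref{lem: global-trip-routes} together with the observation that restriction preserves the base face. In fact your write-up is more careful than the paper's two-sentence argument: the explicit reduction to simple edges via the local complement formula for hourglass labels, and the acknowledgment that truncating a trip at a new boundary requires checking that $F(s)$ and $F_0$ stay on the same sides, are both points the published proof leaves implicit.
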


\begin{proof}
    Combining Lemmas \ref{lemma: benzene-moves-and-trips} and \ref{lem: global-trip-routes}, we have completely determined the behaviour of the trips on $W$ and thus, the separation labels for each edge $e\in W.$ Since restricting the web does not change the trips as long as the base face in the restricted subweb remains the same as in $W,$ then the separation labels remain unchanged.
\end{proof}

\subsection{A description of fundamental domains of plane partitions and the corresponding lattice words}
\label{subsec: A description of fundamental domains}
The boundary $\b_{\pp}$ of a $U_q(\sl_4)$-web in bijection with a plane partition has a boundary which can be read by setting the base face of the web to be the northern most face (see Figure \ref{fig:fundamental-domain-example}) of the web and reading clockwise around each edge of the ambient hexagon (representing the projection of the ambient $a\times b\times c$ box) of the underlying plane partition. The boundaries are given by the box edges NE, E, SE, SW, W, and NW. Thus, the boundary can be subdivided into the following boundaries: $\b_{\pp}=\b_{NE}\b_{E}\b_{SE}\b_{SW}\b_{W}\b_{NW}.$

Recall from Section \ref{subsec: symmetry-classes-of-pp} that in order to understand the symmetry classes of plane partitions, we restrict the plane partition to a fundamental domain. 
In this paper, we are choosing to restrict ourselves to the eastern hemisphere of $a\times a\times c$ box for the fundamental domain for a $\spp.$ 
Pictorially, we will draw a line through $p_{i,i}$ for all $i.$ 
We will call the new diagonal boundary $\b_{180}.$ 
The fundamental domain for any other symmetry class is contained in the fundamental domain for a symmetric plane partition. 
We will conflate notation so that given a web $W$ in bijection with a plane partition $p,$ the subweb obtained by restricting to the fundamental domain of its symmetry class will also be called $W.$ 
Distinguishing between the two webs will never be necessary in this paper. 
We demonstrate these boundaries in Figure \ref{fig:fundamental-domain-example}.

\begin{figure}[ht]
    \centering
    \includegraphics[width=0.5\linewidth]{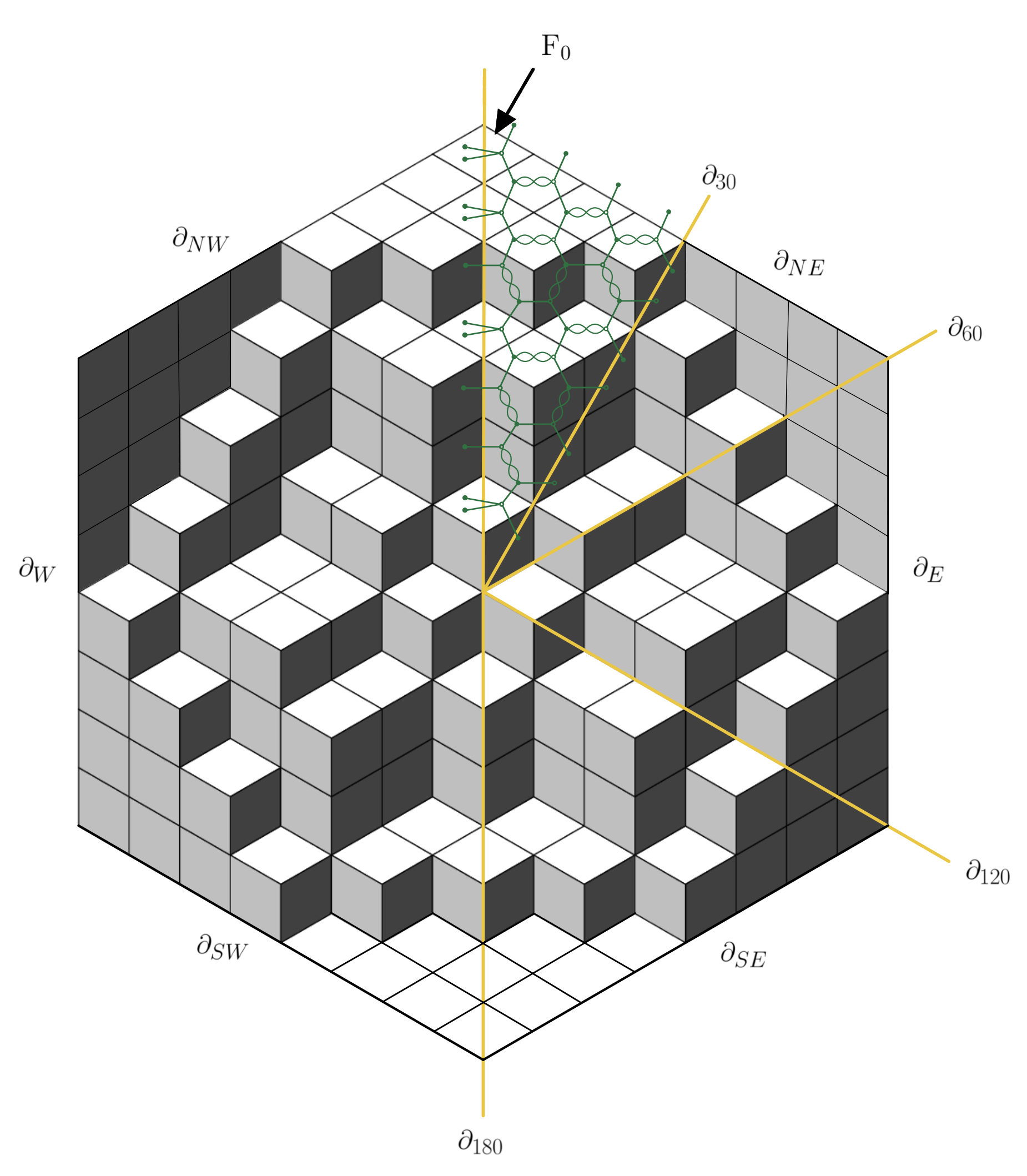}
    \caption{This is an example plane partition $p\in\tsscpp(8,8,8)$ from Figure \ref{fig:TSSCPP-web-boundary-word}. Since any  $p\in\tsscpp(2d,2d,2d)$ is also totally symmetric, cyclically symmetric, and symmetric,  we may draw the fundamental domains for each symmetry class; the corresponding boundary is labeled by $\b_{30},\b_{60},\b_{120},$ and $\b_{180}$, respectively. We have also labeled the base face $F_0$ necessary for separation labeling computations.}  
    \label{fig:fundamental-domain-example}
\end{figure}

We can similarly break down the boundaries of the fundamental domains of symmetry classes of plane partitions into boundary segments. Since the boundaries can be subdivided, it follows that any boundary word can also be broken down into subwords by reading from our chosen base face $F_0$ clockwise and labeling the subwords $\omega_{B},$ where B denotes the boundary segment given by either a cardinal direction or degrees for where the plane partition has been restricted. If the fundamental domain only uses half of a boundary $\b_\bullet,$ we call that boundary $\b_{\widehat{\bullet}}$ and the corresponding subword $\omega_{\widehat{\bullet}}.$ We give a full description of the boundaries and their corresponding lattice words now.
\begin{itemize}
    \item For a general plane partition,  the boundary can be partitioned into the following subboundaries:
    \[
    \b_{\pp} = 
    \b_{\mathrm{NE}}
    \b_{\mathrm{E}}
    \b_{\mathrm{SE}}
    \b_{\mathrm{SW}}
    \b_{\mathrm{W}}
    \b_{\mathrm{NW}}.
    \]
    Thus, the corresponding lattice can be partitioned into subwords corresponding to each subboundary:
    \[
    \omega_{\pp} = 
    \omega_{\mathrm{NE}}
    \omega_{\mathrm{E}}
    \omega_{\mathrm{SE}}
    \omega_{\mathrm{SW}}
    \omega_{\mathrm{W}}
    \omega_{\mathrm{NW}}.
    \]
    \item For a symmetric plane partition we have
    \[
    \b_{\spp} = 
    \b_{\mathrm{NE}}
    \b_{\mathrm{E}}
    \b_{\mathrm{SE}}
    \b_{180}
    \]
    and 
    \[
    \omega_{\spp} = 
    \omega_{\mathrm{NE}}
    \omega_{\mathrm{E}}
    \omega_{\mathrm{SE}}
    \omega_{180}.
    \]
     \item 
    For a cyclically symmetric plane partition we have
    \[
    \b_{\cspp} = 
    \b_{\mathrm{NE}}
    \b_{\mathrm{E}}
    \b_{120}
    \b_{\widehat{180}}
    \]
    and
    \[
    \omega_{\cspp} = 
    \omega_{\mathrm{NE}}
    \omega_{\mathrm{E}}
    \omega_{120}
    \omega_{\widehat{180}}.
    \]
    \item For a totally symmetric plane partition we have
    \[
    \b_{\tspp} = 
    \b_{\mathrm{NE}}
    \b_{60}
    \b_{\widehat{180}}
    \]
    and
    \[
    \omega_{\tspp} = 
    \omega_{\mathrm{NE}}
    \omega_{60}
    \omega_{\widehat{180}}.
    \]
    \item For a totally symmetric self complementary plane partition we have
    \[
    \b_{\tsscpp} =
    \b_{\widehat{\mathrm{NE}}}
    \b_{30}
    \b_{\widehat{180}}
    \]
    and
    \[
    \omega_{\tsscpp} =
    \omega_{\widehat{\mathrm{NE}}}
    \omega_{30}
    \omega_{\widehat{180}}.
    \]
\end{itemize}
Those subwords given by cardinal directions have been previously described in \cite[Prop.~5.5]{gaetz2023rotation} restated in this paper in Theorem \ref{thm:general-lattice-words}. Due to Proposition \ref{prop: restriction preserves words}, when $p\in\pp(a,b,c),$ the subwords are
\begin{align*}
    \omega_{\mathrm{NE}} = 1^a,\hspace{4mm}    
    \omega_{\mathrm{E}}  = \barfour^c, \hspace{4mm} 
    \omega_{\mathrm{SE}} = 2^b,  \hspace{4mm}      
    \omega_{\mathrm{SW}} = \bartwo^b, \hspace{4mm} 
    \omega_{\mathrm{W}}  = 4^c,\hspace{2mm}  \text{ and }   \hspace{2mm}    
    \omega_{\mathrm{NW}} = \barone^a.
\end{align*}

The proof of Theorem \ref{thm: lattice-words-symmetry-classes} then gives the rest of the subwords for each symmetry class studied in this paper.

\subsection{A description of hourglass edges along boundaries of fundamental domains}
\label{subsec: A description of hourglass edges along boundaries}
Restricting a web by any of the boundaries in order to obtain the fundamental domain for a given symmetry class results in the intersection of either a simple edge or an hourglass edge with the new boundary. When an hourglass edge with separation label $\{i,j\}$ crosses the new boundary, it becomes a \emph{split} edge. As such, its separation labeling splits and $i$ is associated to one edge and $j$ the other. Throughout this paper we will refer to the pair of separation labels resulting from a split edge with separation label $\{i,j\}$ as $(ij).$
Any simple edge will not be affected by the restriction to the boundary; as such, the trip for the simple edge remain unaffected causing the separation label for the simple edge to be unchanged. This is demonstrated in Figure \ref{fig:boundary-edges}.  

\begin{center}
\begin{figure}[ht]
\centering
\includegraphics[scale=0.25]{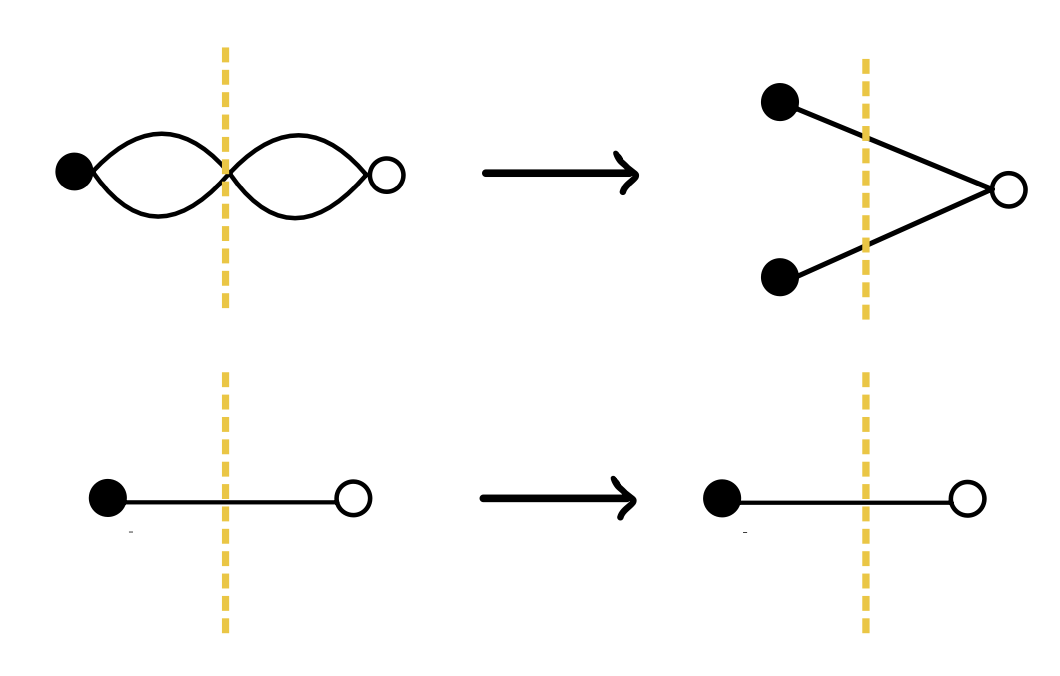}
\caption{The yellow dashed line is the boundary caused by the restriction to the fundamental domain.} 
\label{fig:boundary-edges}
\end{figure}
\end{center}

\begin{remark}
    In this paper we have solely focused on oscillating tableaux. 
    These correspond to webs that have edges with weight $1$ incident to a boundary vertex. 
    However, it is also possible for webs to have hourglass edges incident to a boundary vertex. 
    These webs correspond to fluctuating tableaux defined in \cite[Definition 2.1]{gaetz2023promotion} and related to webs in \cite{gaetz2023rotation}. 
    The process we describe in this paper of splitting an hourglass edge is \emph{oscillization} of a web (\cite[Definition~3.3]{gaetz2023rotation}), wherein one changes all hourglass edges incident to a boundary vertex to simple edges. 
    Before an hourglass edge incident to a boundary vertex is oscillized, it has separation labeling $\{i,j\}$ where order is unknown. 
    Once it is oscillized, the separation labels remain the same, but are now ordered. In this paper we denote them $(ij).$
\end{remark}

\section{Proof of lattice words of symmetry classes}
\label{subsec: proof of lattice words of symmetry classes}
We begin this section by restating Theorem \ref{thm: lattice-words-symmetry-classes} to include the specific requirements on the subwords given by the letters in the multisets $\{\cdots\}.$ 
The proof in this section relies heavily on notation and theory described in Section \ref{sec: Lattice words of symmetry classes}.  

\begin{customthm}{1.2}
    The boundary words of $U_q(\sl_4)$-webs corresponding to plane partitions of select symmetry classes are as follows:
\begin{itemize}
    \item $\bm{\spp(a,a,c):}$
        \[
        1^a~\Bar{4}^c~2^a~\{4^c,(34)^a\}
        \]
        \begin{itemize}
            \item $\{4^c,(34)^a\}:$ We may freely order the $c$ $4$s and $a$ $(34)s.$
        \end{itemize}
        
    \item $\bm{\cspp(a,a,a):}$
        \[
        1^a~\Bar{4}^a~\{(\overline{31})^m,\Bar{1}^{a-m}\}~\{4^{a-m},(34)^m\}
        \]
        \begin{itemize}
            \item $\{(\overline{31})^m,\Bar{1}^{a-m}\}:$ We may freely order the $m$ $(\overline{13})$s and $a-m$ $\Bar{1}$s for $0\leq m\leq a.$ 
            \item $\{4^{a-m},(34)^m\}:$ If $(\overline{31})$ is in position $i$ in the subword given by $\{(\overline{31})^m,\Bar{1}^{a-m}\}, 0\leq i\leq a,$ then $(34)$ is in position $a-i-1$ in the subword given by $\{4^m,(34)^m\}.$
        \end{itemize}

    \item $\bm{\tspp(a,a,a):}$
        \[
        1^a\{(23)^{a-m},2^m\}~\{4^{a-m},(34)^m\}
        \]
        \begin{itemize}
            \item $\{(23)^{a-m},2^m\}~:$ We may freely order of the $a-m$ $(\overline{23})$s and the $m$ $2$s for all $0\leq m\leq a.$
            \item $\{4^{a-m},(34)^m\}~:$ For all $0\leq m\leq a,$ in the subword of $\{(23)^{a-m},2^m\}$ given by letters in position $a-m$ to position $a-m-i$ for $0\leq i\leq a-m-1,$ the number of $(23)$s must be less than or equal to the number of $4$s in the subword of $\{4^{a-m},(34)^m\}$ given by letters in position $0$ to position $i.$
        \end{itemize}

    \item $\bm{\tsscpp(a,a,a), a = 2d:}$
        \[
        1^d~\underbrace{2~\Bar{4}~2~\cdots\Bar{4}~2}_{2d-1}~\{4^{d-1},(34)^{d-1}\}~(34)
        \]
        \begin{itemize}
            \item $\{4^{d-1},(34)^{d-1}\}:$ The subword constructed of $d-1$ $4$s and $d-1$ $(34)$s must be Yamanouchi in letters $4$ and $(34)$ when one includes a $4$ at the beginning of the subword and a $(34)$ at the end.
        \end{itemize}   
\end{itemize}
\end{customthm}

\begin{proof} A boundary word $\omega$ of an hourglass plabic graph $W$ corresponding to a plane partition $p$  in $\spp(a,a,c),\cspp(a,a,a),\tspp(a,a,a),$ or $\tsscpp(a,a,a)$ is given by certain separation labelings (Definition \ref{def: separation-labelings}) of $W$ constrained to the fundamental domain of a given symmetry class. 

In this proof, we begin with the least restricted symmetry class, symmetric. We then proceed to cyclically symmetric, and totally symmetric. Finally, we end with the most restrictive symmetry class, totally symmetric self-complementary.

The proof for each word follows the same form. 
By Proposition \ref{prop: restriction preserves words}, the separation labels of edges of a web $W$ remain unchanged when the web is restricted to a subweb that includes the base face of $W.$ 
As explained in Section \ref{subsec:hourglass-plabic-graphs and -pp}, we can decompose the boundary word into subwords corresponding to each boundary of the fundamental domain for the given symmetry class. 
We then prove a description for each subword by either using Theorem \ref{thm:general-lattice-words}, which was proven in \cite{GPPSS}, or we analyze the trips of the edges on the boundary associated to the subword we seek to describe. 
For each individual symmetry class, by Proposition \ref{prop: restriction preserves words} (also see a more in depth discussion in Section \ref{subsec: A description of hourglass edges along boundaries}) we may analyze only where the trips begin and end. 
Then, for each symmetry class we give an example. 
These can be found in Figures \ref{fig:SPP-ex for proof} - \ref{fig:TSSCPP-ex for proof}.

With these details in mind, we now proceed to the proofs for each lattice word for each of the given symmetry classes.

$\bullet$ \textbf{SPP:} (See Figure \ref{fig:SPP-ex for proof}.) For $p\in\spp(a,a,c),$ we consider the web $W,$ which is in bijection with $p,$ restricted to its fundamental domain. By Proposition \ref{prop: restriction preserves words}, the boundary of the web restricted to its fundamental domain can be subdivided into subboundaries:
    \[
    \b_{\spp} = 
    \b_{\mathrm{NE}}
    \b_{\mathrm{E}}
    \b_{\mathrm{SE}}
    \b_{180}.
    \]
    Thus, the boundary word for $W$ can be similarly partitioned into subwords given corresponding to the division of the boundary of $W$:
    \[
    \omega_{\spp} = 
    \omega_{\mathrm{NE}}
    \omega_{\mathrm{E}}
    \omega_{\mathrm{SE}}
    \omega_{180}.
    \]

Following Lemma \ref{prop: restriction preserves words}, Theorem \ref{thm:general-lattice-words} gives 
\[
    \omega_{\mathrm{NE}} = 1^a,
    \quad
    \omega_{\mathrm{E}} = \barfour^c,
    \quad
    \text{and}
    \quad
    \omega_{\mathrm{SE}} = 2^a.
\]
So it remains to show that $\omega_{180} = \{4^c, (34)^a\},$ where we may freely choose where to place the $c$ $4$ and the $a$ $(34).$ By definition, $p$ has that $p_{i,j}=p_{j,i}$ for all $i,j.$ This puts no extra restrictions on $p_{i,j}$ such that $i=j$. Therefore, any combination of split edge $(e_1~e_2)$ or simple $g$ along $\partial_{180}$ in $W$ is valid, since any combinations of north faces or NS edges are valid in $p.$ Therefore, it only remains to show the separation labels for $(e_1~e_2)$ is $(34)$ and the separation label for $g$ is $4.$
Note that any edge intersecting $\b_{180}$ is incident to a black boundary vertex. Therefore, for each edge on $\b_{180},$ we seek to know the number of trips that separate the base face of $W$ from the face to the south of that edge.

By Lemma \ref{lem: global-trip-routes}, we know the paths of the trips. In particular, $\trip_1(g)$ ends on the NE boundary, $\trip_i(g)$ end on the W boundary for $i=2,3.$ Therefore, $\sep(g) = 4,$ since all three trips contribute to the separation label.

    \begin{example}
    \label{ex:SPP-ex for proof}
        In Figure \ref{fig:SPP-ex for proof} we have a plane partition in $\spp(4,4,4)$ restricted to its fundamental domain with its corresponding web with base face $F_0.$ 
        Each image has trips $1,2,3$ given by three different colors for a type of edge incident to each boundary. 
        
        In the first line, we begin with an arbitrary edge $e$ on the boundary $\partial_{NE}$ and the trips that pass through $e.$ 
        The second figure gives the trips that pass through an arbitrary edge on the boundary $\partial_E$. 
        
        In the second line, the first figure, shows the trips that pass through an arbitrary edge on the boundary $\partial_{SE}.$ The second line shows the trips for as simple edge on $\partial_{180}.$
        
        In the third line, the first figure shows the trips that pass through the lower edge of a split edge on $\b_{180}.$ This is the edge with separation label $3$ in the pair $(34).$ The final figure shows the upper edge of a split edge on $\b_{180}.$ This is the edge with separation label $4$ in the pair $(34).$
    \end{example}
    
    \begin{figure}[!htbp]
    \centering
    \includegraphics[width=0.4\linewidth]{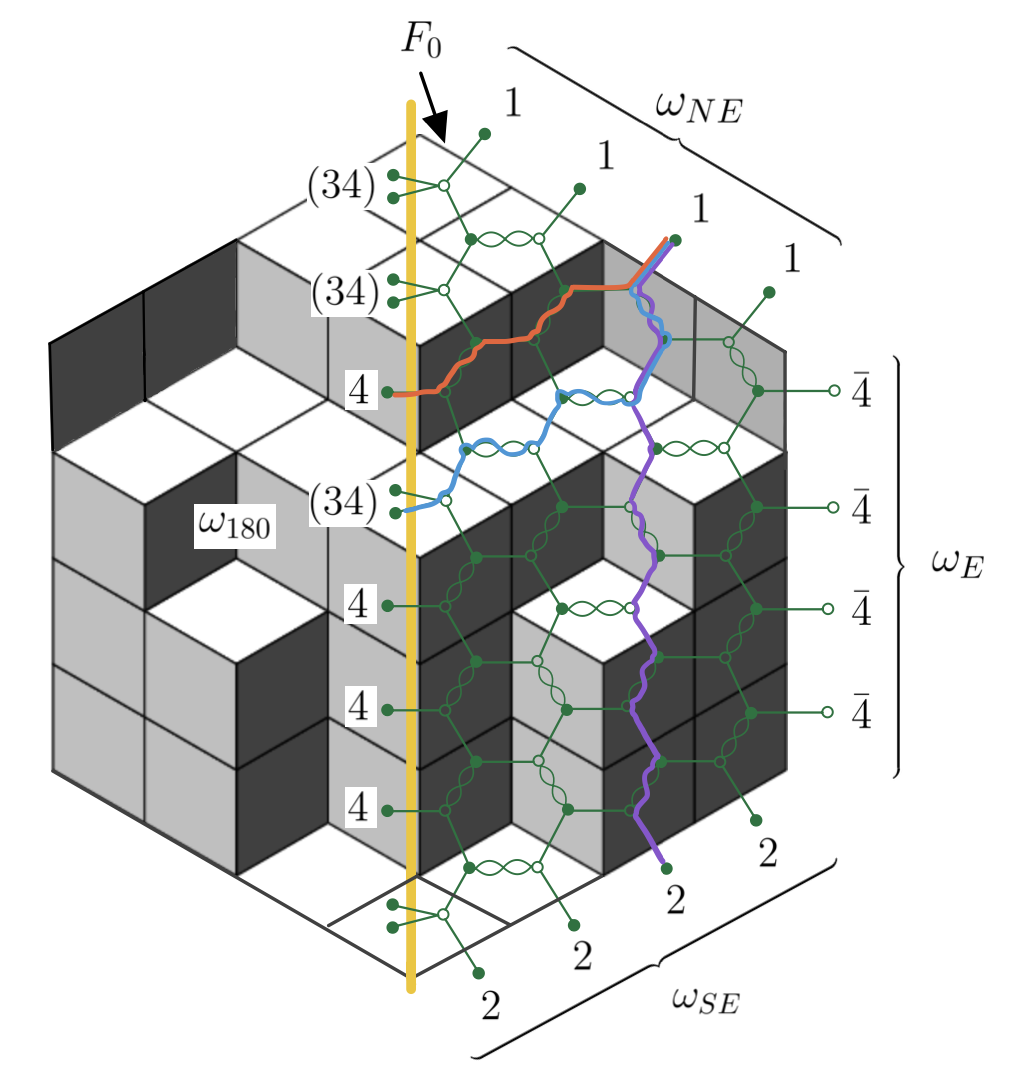}
    \includegraphics[width=0.4\linewidth]{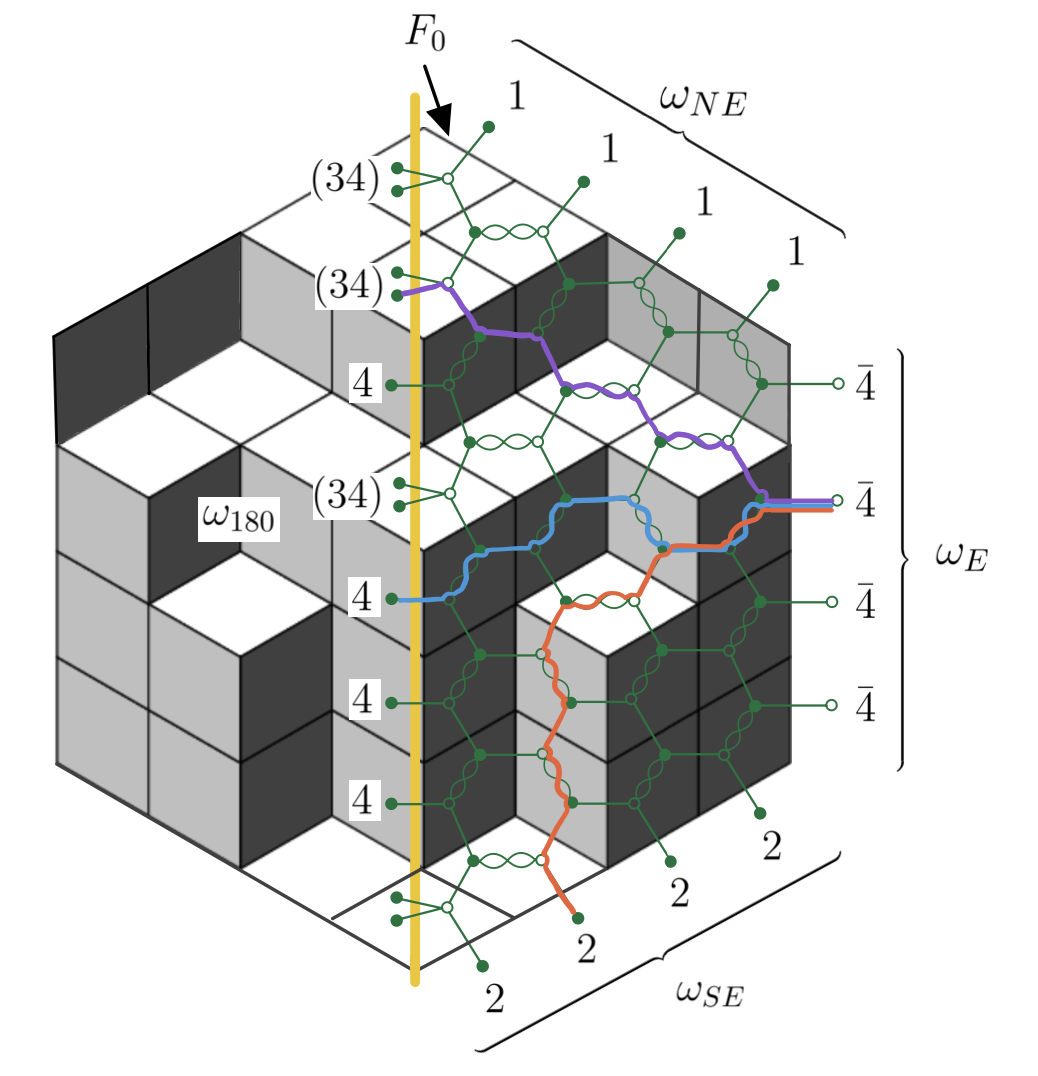}
    \includegraphics[width=0.4\linewidth]{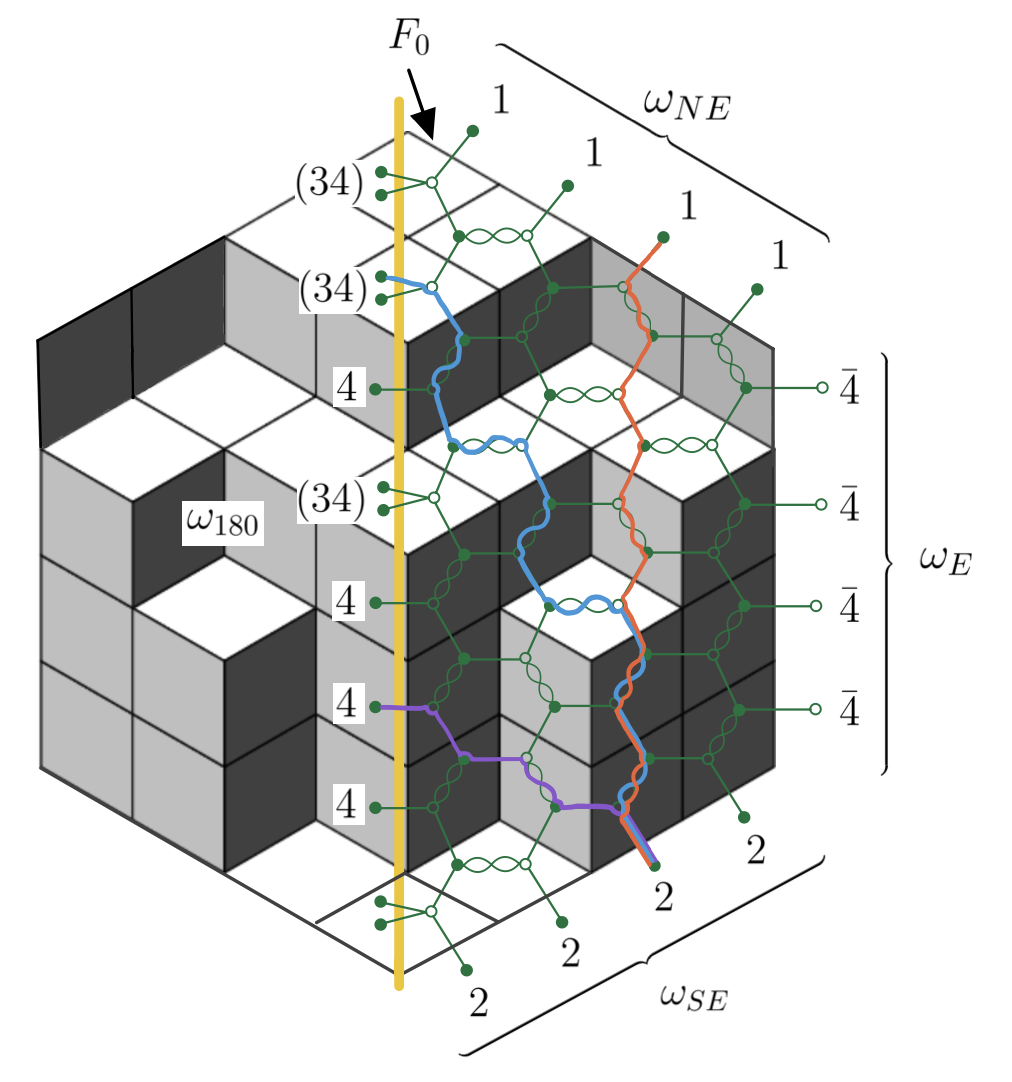}
    \includegraphics[width=0.4\linewidth]{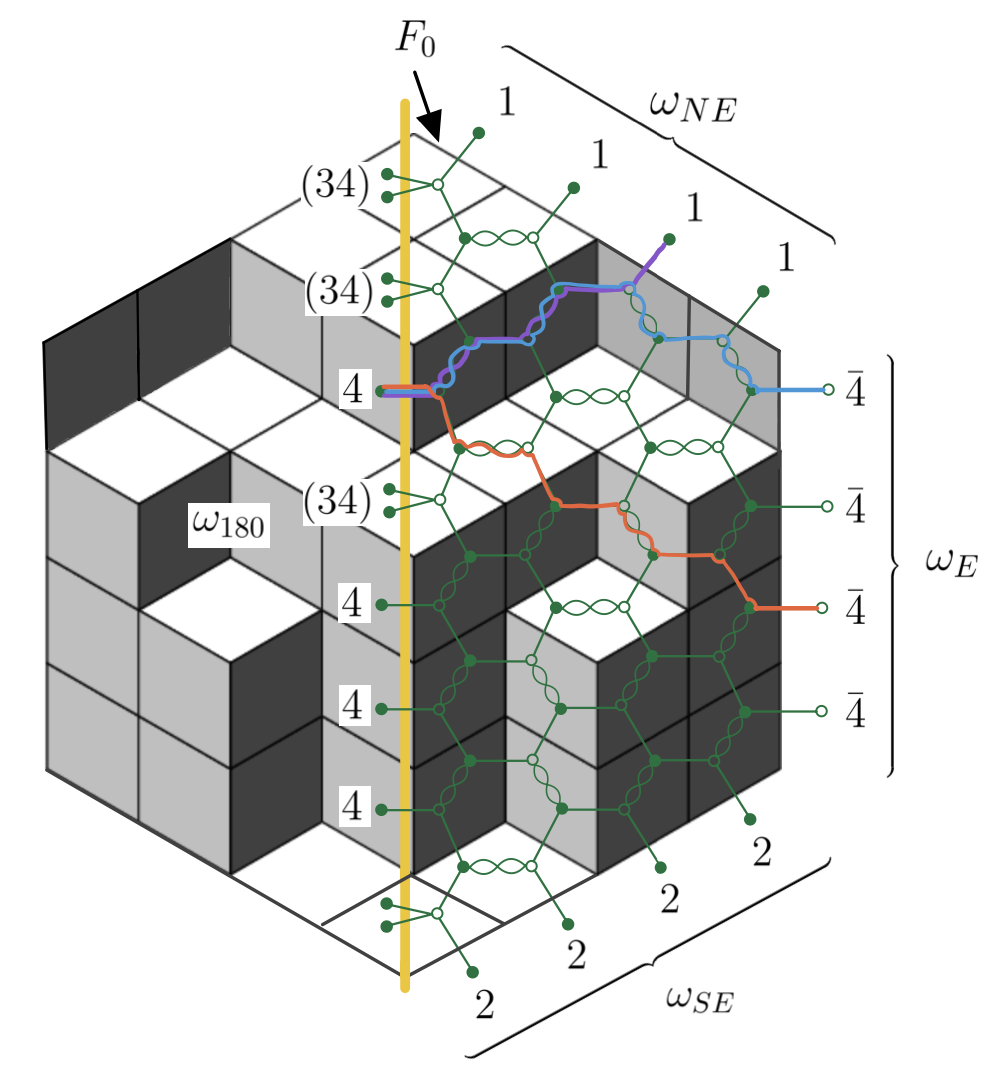}
    \includegraphics[width=0.4\linewidth]{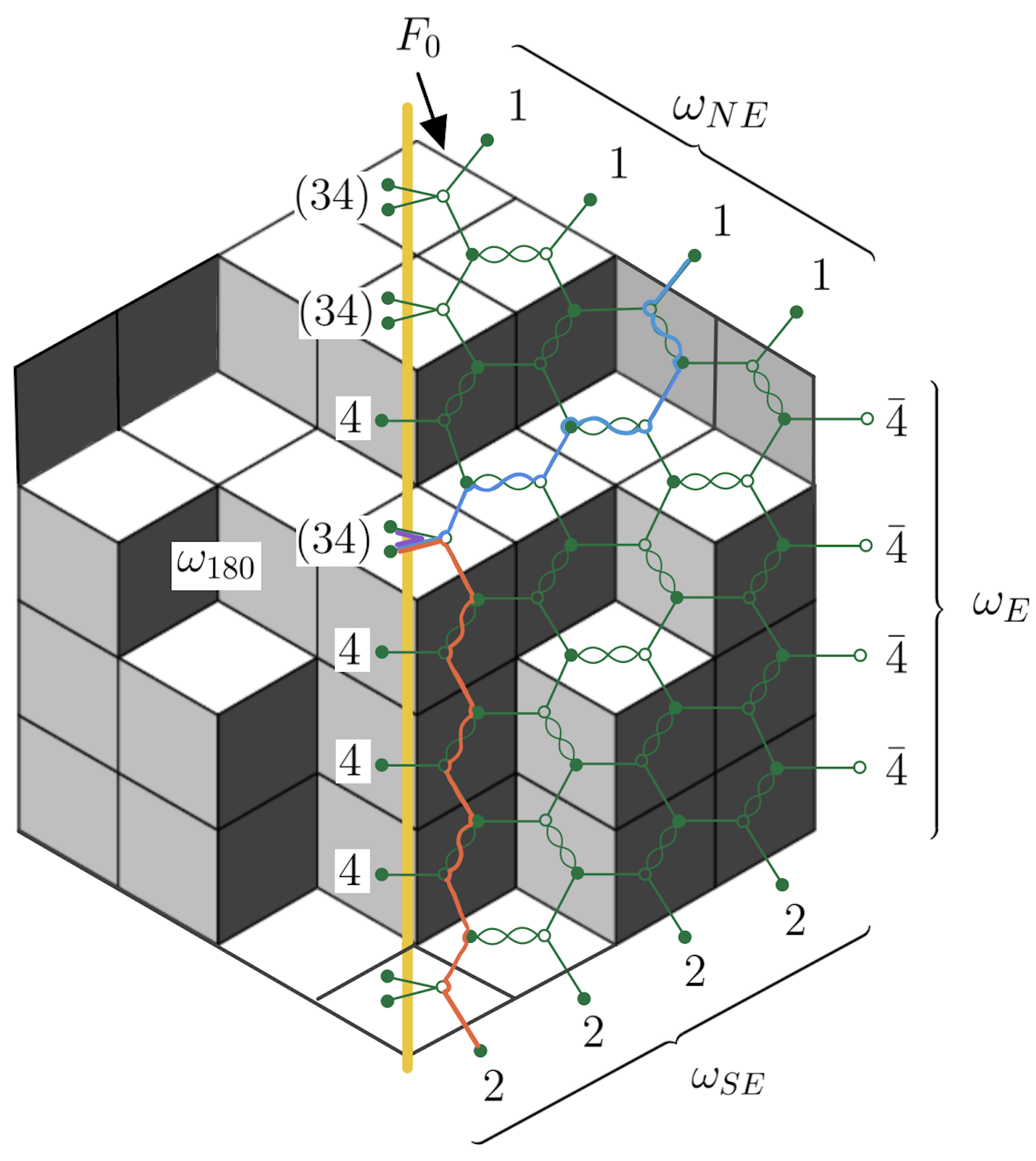}
    \includegraphics[width=0.4\linewidth]{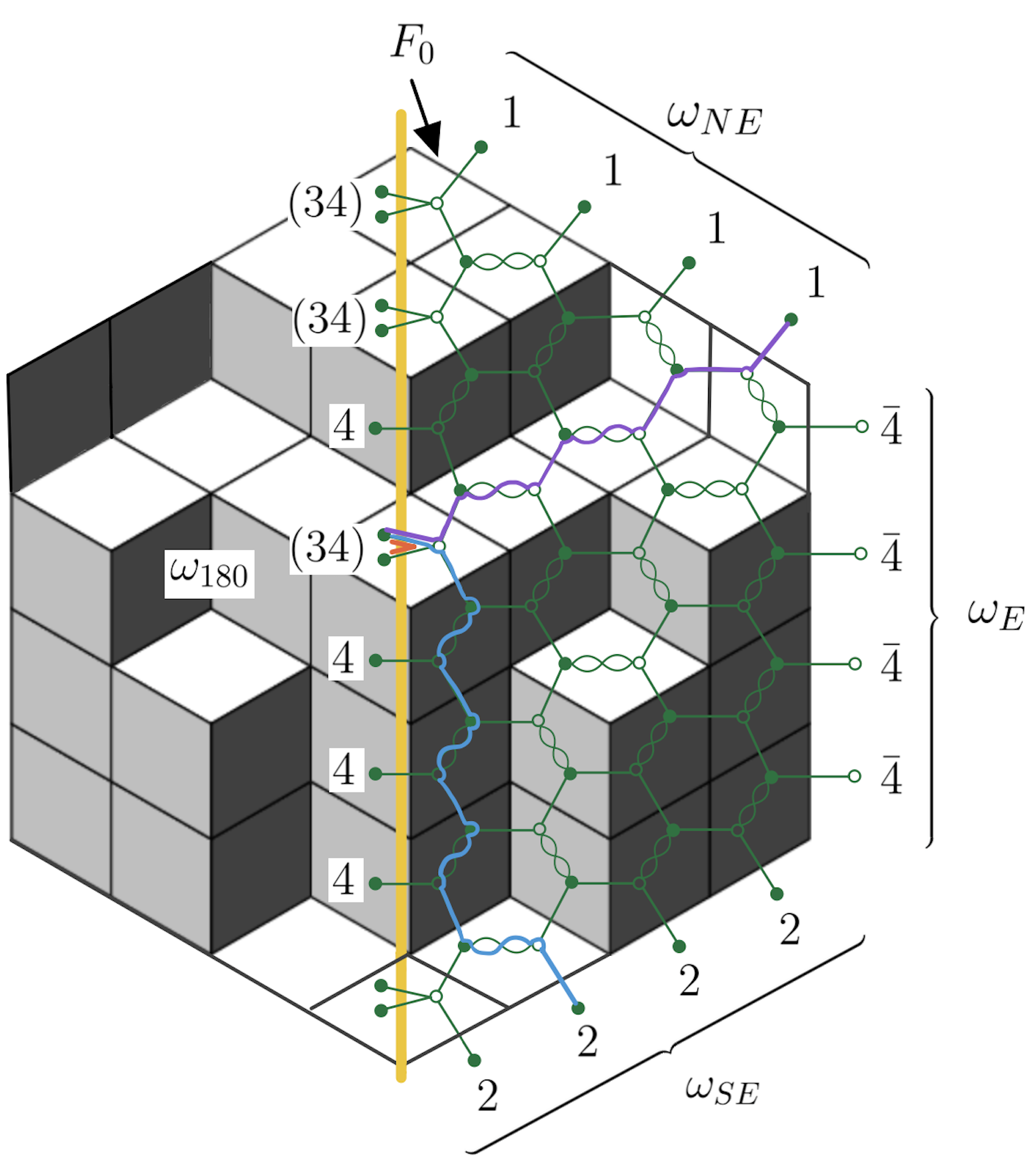}
    \caption{
        A plane partition in $\spp(4,4,4)$ with the trips drawn within its fundamental domain, as discussed in Example \ref{ex:SPP-ex for proof}
    }
    \label{fig:SPP-ex for proof}
    \end{figure}

 \vspace{5mm}       
        $\bullet$ \textbf{CSPP:} (See Figure \ref{fig:CSPP-ex for proof}.) 
        For $p\in\cspp(a,a,a),$ consider the web $W$ that is in bijection with $p$ restricted to its fundamental domain. By Proposition \ref{prop: restriction preserves words}, the boundary of the fundamental domain of $W$ can be subdivided so that
        \[
        \b_{\cspp} = 
        \b_{\mathrm{NE}}
        \b_{\mathrm{E}}
        \b_{120}
        \b_{\widehat{180}}.
        \]
        The corresponding boundary word can be subdivided into the following subwords:
        \[
        \omega_{\cspp} = 
        \omega_{\mathrm{NE}}
        \omega_{\mathrm{E}}
        \omega_{120}
        \omega_{\widehat{180}}.
        \]
        From Lemma \ref{prop: restriction preserves words} and Theorem \ref{thm:general-lattice-words}, 
        \[
            \omega_{\mathrm{NE}} = 1^a
            \quad
            \text{ and }
            \quad
            \omega_{\mathrm{E}} = \barfour^c.
        \]
        So the remaining subwords left to describe are $\omega_{120}$ and $\omega_{\widehat{180}}.$ We begin with showing $\omega_{120} = \{(\overline{31}),\barone^{a-m}\}$. 

        The boundary $\b_{120}$ intersects either an edge of a unit cube or cuts through an east face of a unit cube. In the first case, $\b_{120}$ intersects a simple edge beginning on a west face of a unit cube in position $(i,j,k)$ crossing its SW edge and ending on either another west face or a north face of a unit cube in position $(i,j,k-1)$ or $(i,j+1,k-1),$ respectively. In either case, we obtain a simple edge with a white boundary vertex. If we call this edge $g,$ then, by the discussion in Lemma \ref{lem: global-trip-routes}, $\trip_1(g)$ and $\trip_2(g)$ end on the NE boundary and $\trip_3(g)$ ends on the E boundary. Thus, none of these contribute to the separation label, giving us $\sep(g)=1,$ but the vertex adjacent to $g$ is white, so we obtain $\barone.$

        In the second case, $\b_{120}$ intersects an east face of a unit cube, which means it intersects an hourglass edge, which results in a split edge with white boundary vertices. Reading clockwise, call the two edges of the split edge $e_1$ and $e_2.$ We once more recall Lemma \ref{lem: global-trip-routes}, which says that $\trip_1(e_1)$ ends on the E boundary, $\trip_2(e_1)$ ends on the NW boundary, and $\trip_3(e_1)$ takes the path along $e_1$ and $e_2.$ Since $\trip_2(e_1)$ and $\trip_3(e_1)$ will separate the face incident to edges $e_1$ and $e_2,$ then $\sep(e_1) = 3.$ Since $e_1$ is incident to a white boundary vertex, then $e_1$ contributes the letter $\barthree.$ 
        Moving on to $e_2,$ since $e_2$ is a split edge, where $\trip_1(e_2)$ take a path from a white vertex to black vertex, then the path it takes is simply $e_1$ ending at $e_2.$ Then we observe from Lemma \ref{lem: global-trip-routes}, that $\trip_2(e_2)$ ends on the E boundary and $\trip_3(e_2)$ ends on the NE boundary. Since none of these trips separate the face to the right of $e_2$ as one walks from the black vertex incident to $e_2$ to the white vertex incident to $e_2,$ it follows that $\sep(e_2) = 1.$ Since the boundary vertex incident to $e_2$ is white, then the contributed letter is $\barone.$ Together the split edge $(e_1e_2)$ contributes the pair of letters $(\overline{31}),$ which we will consider from this point forward as a single letter. 

        We have now established that $\omega_{120}$ consists of letters $\barone$ and $(\overline{31}).$  The number of letters in $\omega_{120}$ is equal to $a$ since $\b_{120}$ goes from the center to a corner of the $a\times a\times a$ box. If $p\in\cspp(a,a,a)$ is the empty plane partition, then $\omega_{120} = \barone^a.$ However, if $p$ is the full plane partition, $\omega_{120} = (\overline{31})^a.$ By the definition of the symmetry class, any combinations of east faces and NW edges are possible along $\b_{120}.$ Thus, we have $\omega_{120} = \{\barone^{a-m},(\overline{31})^{m}\},$ for $0\leq m\leq a$ for any choice of placement of $\barone$ and $(\overline{31})s$ in $\omega_{120}.$  

        We now describe $\omega_{\widehat{180}}.$ Since $\b_{\widehat{180}}$ is half of the $\b_{180}$ boundary, by Lemma \ref{prop: restriction preserves words}, $\omega_{\widehat{180}}$ must consist of only $(34)$s and $4$s. Recall from the definition in Figure \ref{fig:symm-class-def} that for all $i,$ the $i$th row in $p$ is conjugate to the $i$th column, so the only dependency will be between $\omega_{120}$ and $\omega_{\widehat{180}}.$
        
        In order to address the number of each letter in $\omega_{\widehat{180}}$, we first address the ordering of the letters $(34)$ and $4$ in $\omega_{\widehat{180}}$ through their dependence on the position of the letters in $\omega_{120}.$ The stipulation that  if $(\overline{13})$ is in position $i$ in $\omega_{120},$ then $(34)$ is in position $r-i-1$ follows from the requirement that the $j$th row in $p$ must be conjugate to the $j$th column for all $j.$ Note that the same must also be true for the positions of $\Bar{1}$ in $\omega_{120}$ and $4$ in $\omega_{\widehat{180}}.$ Moreover, this means that for each $(\overline{31})$ in $\omega_{120}$ we obtain a $(34)$ in $\omega_{\widehat{180}}$ and for each $\barone$ in $\omega_{120}$ we have a corresponding $4$ in $\omega_{\widehat{180}}.$ Thus, $\omega_{\widehat{180}} = \{4^{a-m},(34)^m\}$ where the positions of the $4$s and $(34)$s is as described.
    \begin{example}
    \label{ex:CSPP-ex for proof}
        In Figure \ref{fig:CSPP-ex for proof} we have an example of a plane partition in $\cspp(4,4,4)$ restricted to its fundamental domain with its corresponding web with base face $F_0.$ Each image has trips $1,2,3$ given by three different colors for a type of edge incident to each boundary. We have previously shown the trips on boundaries $\partial_{N},\partial_{E},$ and $\b_{\widehat{180}}.$ Thus we only show the trips on the three types of edges found on the boundary given by a restriction to the fundamental domain of cyclically symmetric plane partitions, that is, $\b_{120}.$ 
        Beginning with the furthest left figure, we show the trips for a simple edge on $\b_{120}.$ The middle figure gives the trips for the bottom most edge of a split edge on $\b_{120}.$ The furthest right figure gives the trips for the other split edge on the boundary $\b_{120}.$
    \end{example}
    
    \begin{figure}[htbp]
    \begin{center}
    \includegraphics[width=0.32\linewidth]{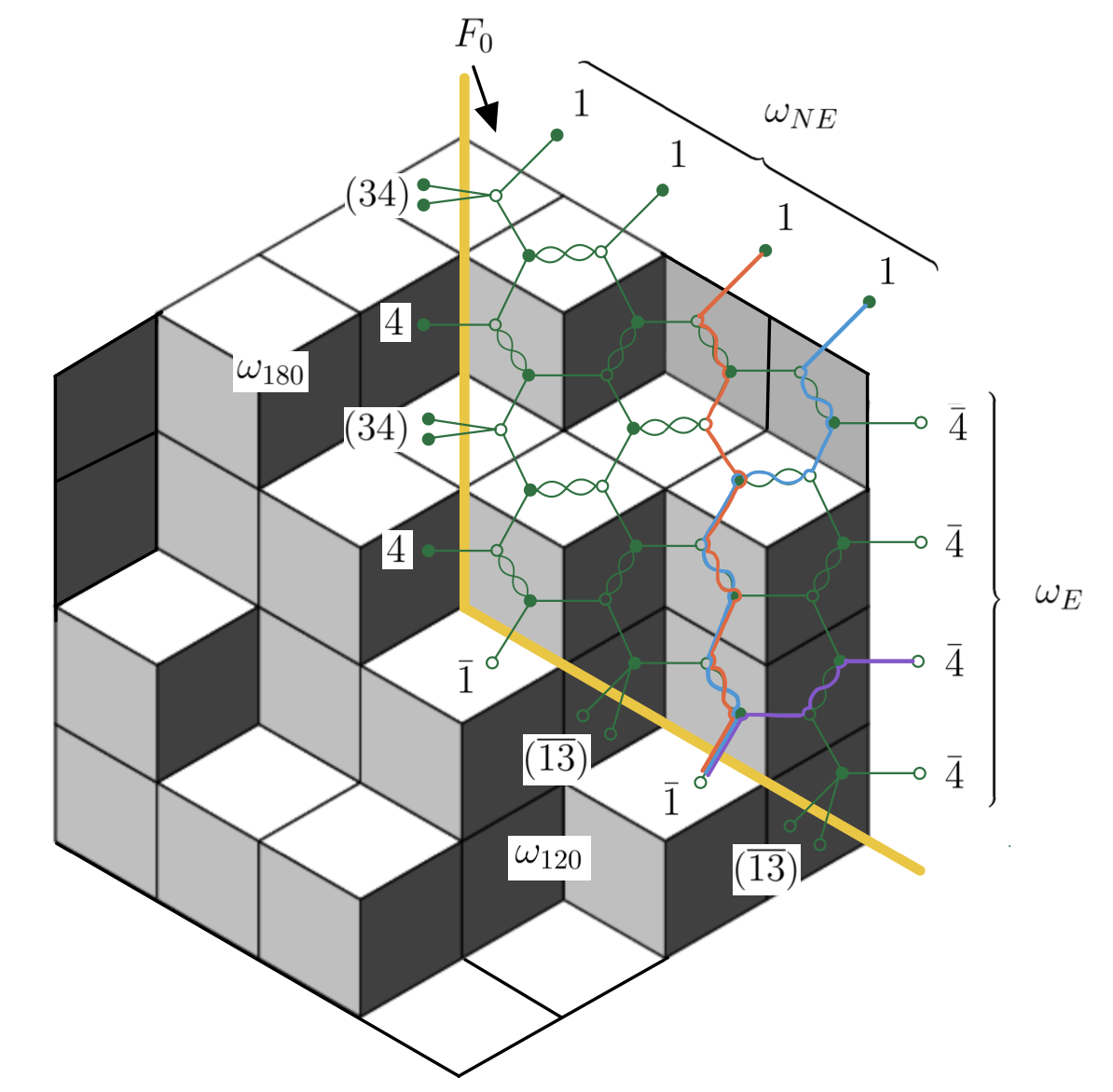}
    \includegraphics[width=0.32\linewidth]{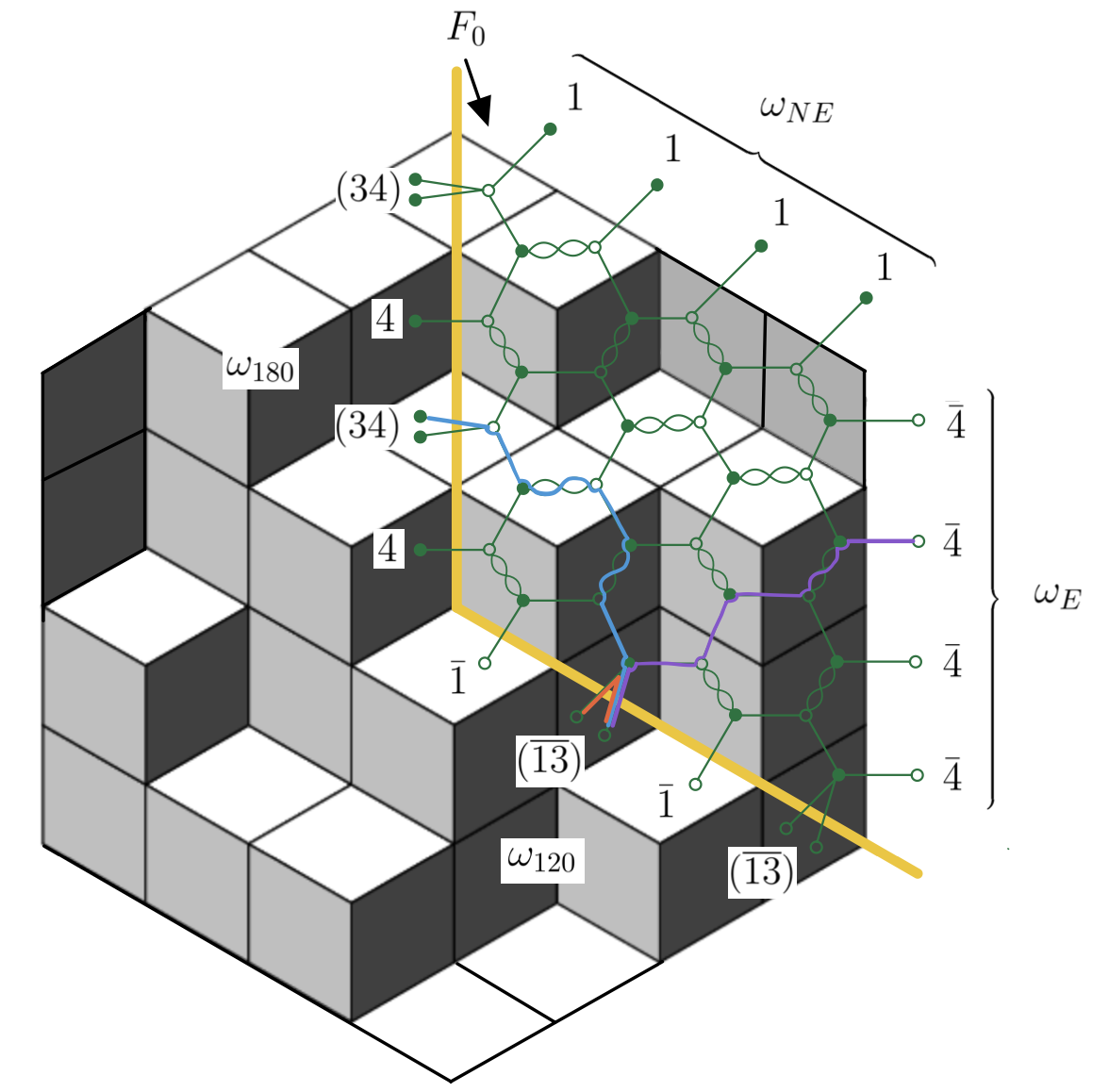}
    \includegraphics[width=0.32\linewidth]{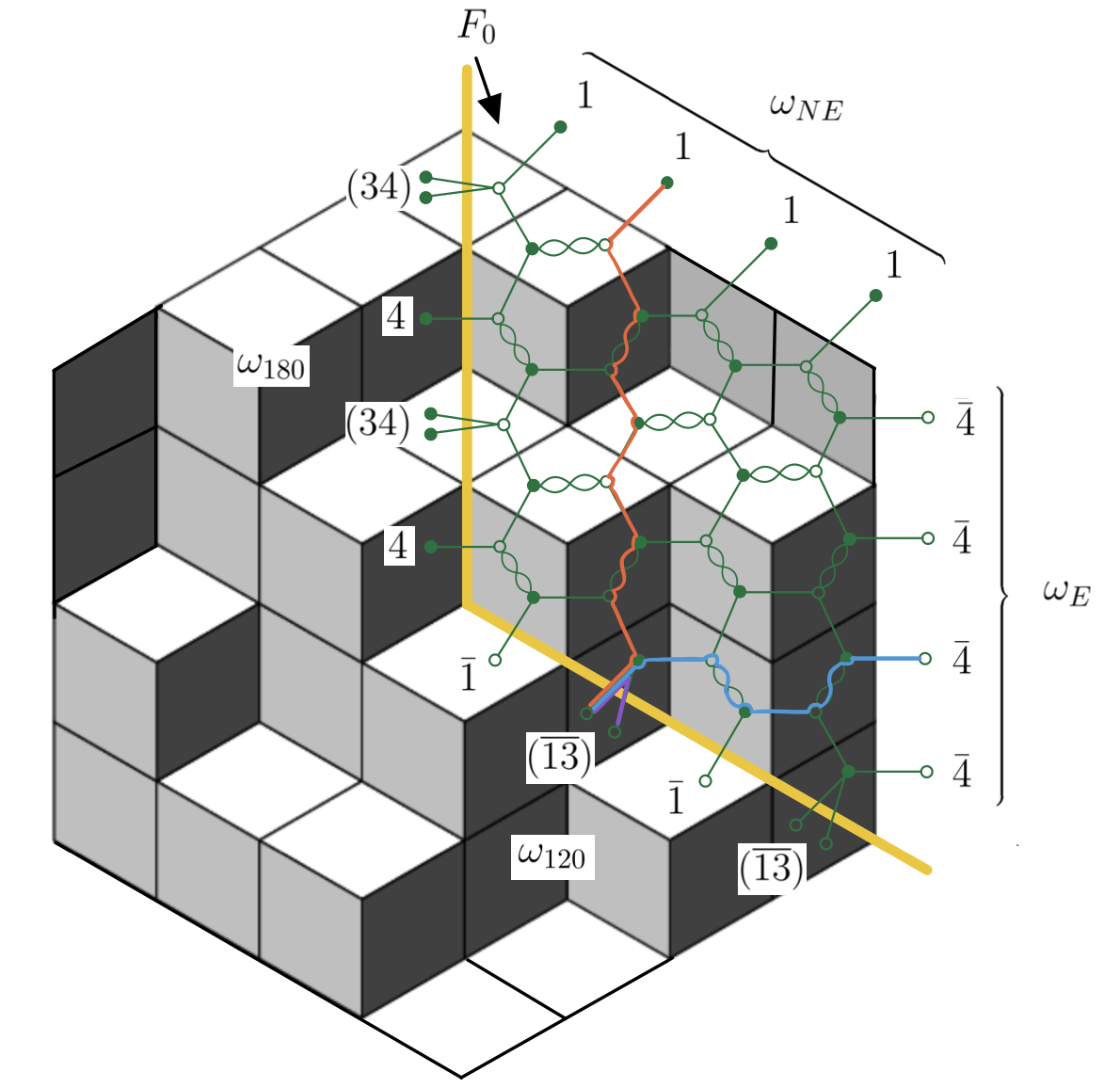}
    \caption{
    A plane partition in $\cspp(4,4,4)$ with the trips drawn within its fundamental domain, as discussed in Example \ref{ex:CSPP-ex for proof}. The lattice word is: 
    \[1^4~\barfour^4~(\overline{31})~\barone~(\overline{31})~\barone~4~(34)~4~(34).\]
    }
    \label{fig:CSPP-ex for proof}
    \end{center}
    \end{figure}
    
\vspace{5mm}      
        $\bullet$ \textbf{TSPP:} (See Figure \ref{fig:TSPP-ex for proof}.) For $p\in\tsscpp(a,a,a),$ consider the web $W$ that is in bijection with $p,$ restricted to its fundamental domain. By Proposition \ref{prop: restriction preserves words}, the boundary of the fundamental domain of $W$ can be partitioned into
        \[
        \b_{\tspp} = 
        \b_{\mathrm{NE}}
        \b_{60}
        \b_{\widehat{180}}.
        \]
        The corresponding boundary word, $\omega_{\tspp},$ can be subdivided so that
        \[
        \omega_{\tspp} = 
        \omega_{\mathrm{NE}}
        \omega_{60}
        \omega_{\widehat{180}}.
        \]
        From Lemma \ref{prop: restriction preserves words} and Theorem \ref{thm:general-lattice-words}, 
        \[
        \omega_{\mathrm{NE}} = 1^a.
        \]
        It remains to describe $\omega_{60}$ and $\omega_{\widehat{180}}.$ We will begin with showing that $\omega_{60}$ is a word using letters from the multiset $\{(23)^{a-m}2^m\}$ for $0\leq m\leq a.$ 

        The boundary $\b_{60}$ cuts through either an west face of a unit cube or an NE edge of a unit cube in $p.$ Thus in $W,$ $\b_{60}$ intersects a simple edge incident to a black boundary vertex or a an hourglass edge, this results in a split edge with each edge incident to a black boundary vertex. From Lemma \ref{lem: global-trip-routes}, we can determine exactly where the trips for all of the edges on $\b_{60}$ end.

        We will start with a simple edge $g$ on $\b_{60}.$ We have $\trip_1(g)$ and $\trip_2(g)$ end on the NW boundary, while $\trip_3(g)$ ends on the NE boundary. As we walk on $g$ from the black vertex to the right vertex, the face to the right of $g$ is separated by only $\trip_3.$ Therefore, $\sep(g) = 2$ and contributes the letter $2$ to $\omega_{60}.$

        Moving on to the split edge, reading clockwise around the boundary of $W,$ call the first edge $e_1$ and the second edge $e_2.$ Since $e_1$ comes from a split edge with black boundary vertices, then $\trip_1(e_1)$ simply follows the path from $e_1$ and ends at $e_2.$ Furthermore, $\trip_2(e_1)$ ends on the NW boundary and $\trip_3(e_1)$ ends on the NE boundary. Since only $trip_3$ will separate the base face from the face to the right of the edge $e_1$ walking from the black vertex to the white vertex, then $\sep(e_1) = 2.$

        We now determine the separation label for the second edge, $e_2$ of the split edge. Since $e_2$ is the second edge in a split edge with black boundary vertices, then $\trip_1(e_2)$ takes the path from $e_2$ to $e_1.$ Furthermore, $\trip_2(e_2)$ ends on the NE boundary and $\trip_3(e_2)$ ends on the NW boundary. Since only $\trip_1(e_2)$ and $\trip_2(e_2)$ separate the base face from the face to the right of $e_2$ as one walks from the black vertex to the white vertex incident to $e_2,$ then $\sep(e_2) = 3.$ Together, the split edge $(e_1~e_2)$ contributes the letter $(23)$ to $\omega_{60}.$

        By definition, if $p\in\tspp(a,a,a),$ then $p$ must be both symmetric and cyclically symmetric. So $p$ must have that $p_{ij} = p_{ji}$ for all $i,j$ and the $i$th row is conjugate to the $i$th column for all $i.$ The symmetric property does not cause there to be any restrictions to $p_{ij}$ along $\b_{60}.$ Then by a similar argument to the case of $p\in\cspp(a,a,a),$ we may freely choose where to place the $m$ $2$s and the $a-m$ $(23)$s for $0\leq m\leq a.$

        We now move on to showing $\omega_{\widehat{180}}$ is comprised of letters $4^{a-m}$ and $(34)^m$ for $0\leq m\leq a.$ 
        Since $\b_{\widehat{180}}$ is half of $\b_{180},$ then we know the letters in $\omega_{\widehat{180}}$ must be comprised of $4$s and $(34)$s. 
        Moreover, since $p\in\cspp(a,a,a)$ and $p\in\spp(a,a,a),$ then if $\omega_{60}$ has $a-m$ $(23)$s, then $\omega_{\widehat{180}}$ has $a-m$ $4$s by the requirements of those symmetry classes.
        This means that there are then $m$ $2$s in $\omega_{60}$ and $m$ $(34)$s in $\omega_{\widehat{180}}.$

        The final requirement follows from the definition of a plane partition in that, the number of unit cubes in each stack of unit cubes along $\b_{60}$ must be less than or equal to the number of unit cubes along $\b_{\widehat{180}}.$ In particular, say we start at the position where $\b_{60}$ and $\b_{\widehat{180}}$ meet, which is between the two subwords $\{(23)^{a-m},2\}$ and $\{4^{a-m},(34)^m\}.$ Since each $(23)$ corresponds to a split edge on a north face of a unit cube and each $4$ corresponds to a simple edge on a west edge, then if for any subword $w_1\cdots w_{a-m-i}$ in $\{(23)^{a-m},2\}$ the number of $(23)$s were greater than the number of $4$s in the subword $\sigma_1\cdots\sigma_i$ in $\{4^{a-m},(34)^m\},$ then this would mean that the number of unit cubes in a stack on $\b_{60}$ is greater than the number of unit cubes in the corresponding position on $\b_{\widehat{180}},$ thereby breaking the requirements of a plane partition.

    \begin{example}
    \label{ex:TSPP-ex for proof}
        In Figure \ref{fig:TSPP-ex for proof} we have an example of a plane partition in $\tspp(4,4,4)$ restricted to its fundamental domain with its corresponding web with base face $F_0.$ Each image has trips $1,2,3$ given by three different colors for a type of edge incident to each boundary. We have previously shown the trips on boundaries $\partial_{N}$ and $\b_{\widehat{180}}.$ Thus we only show the trips on the three types of edges found on the boundary given by a restriction to the fundamental domain of totally symmetric plane partitions, that is, $\b_{60}.$ 
        Beginning with the furthest left figure, we show the trips for a simple edge on $\b_{60}.$ The middle figure gives the trips for an edge of a split edge on $\b_{60}.$ The furthest right figure gives the trips for the other split edge on the boundary $\b_{60}.$
    \end{example}
    \begin{figure}[!htbp]
    \centering
    \includegraphics[width=0.32\linewidth]{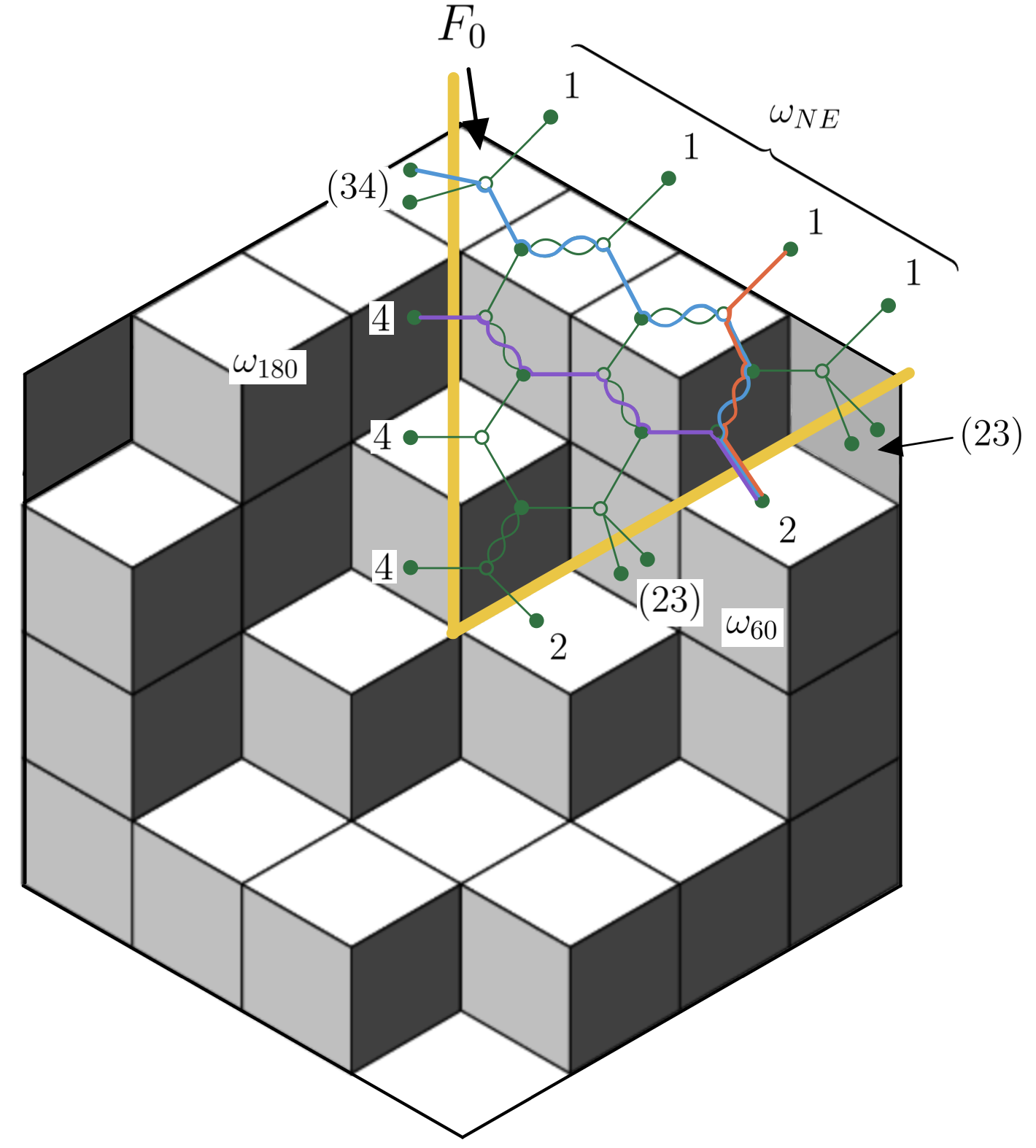}
    \includegraphics[width=0.32\linewidth]{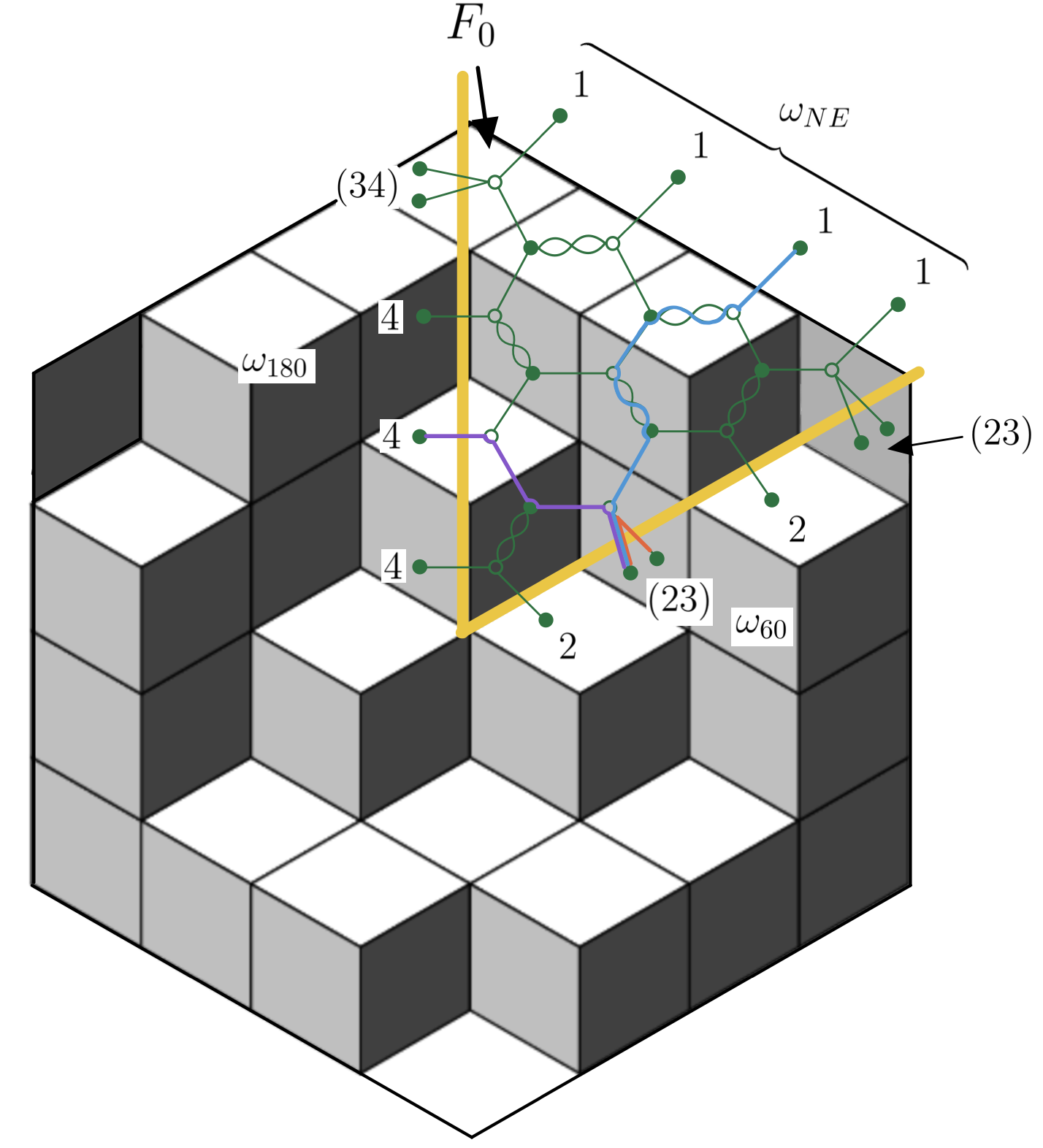}
    \includegraphics[width=0.32\linewidth]{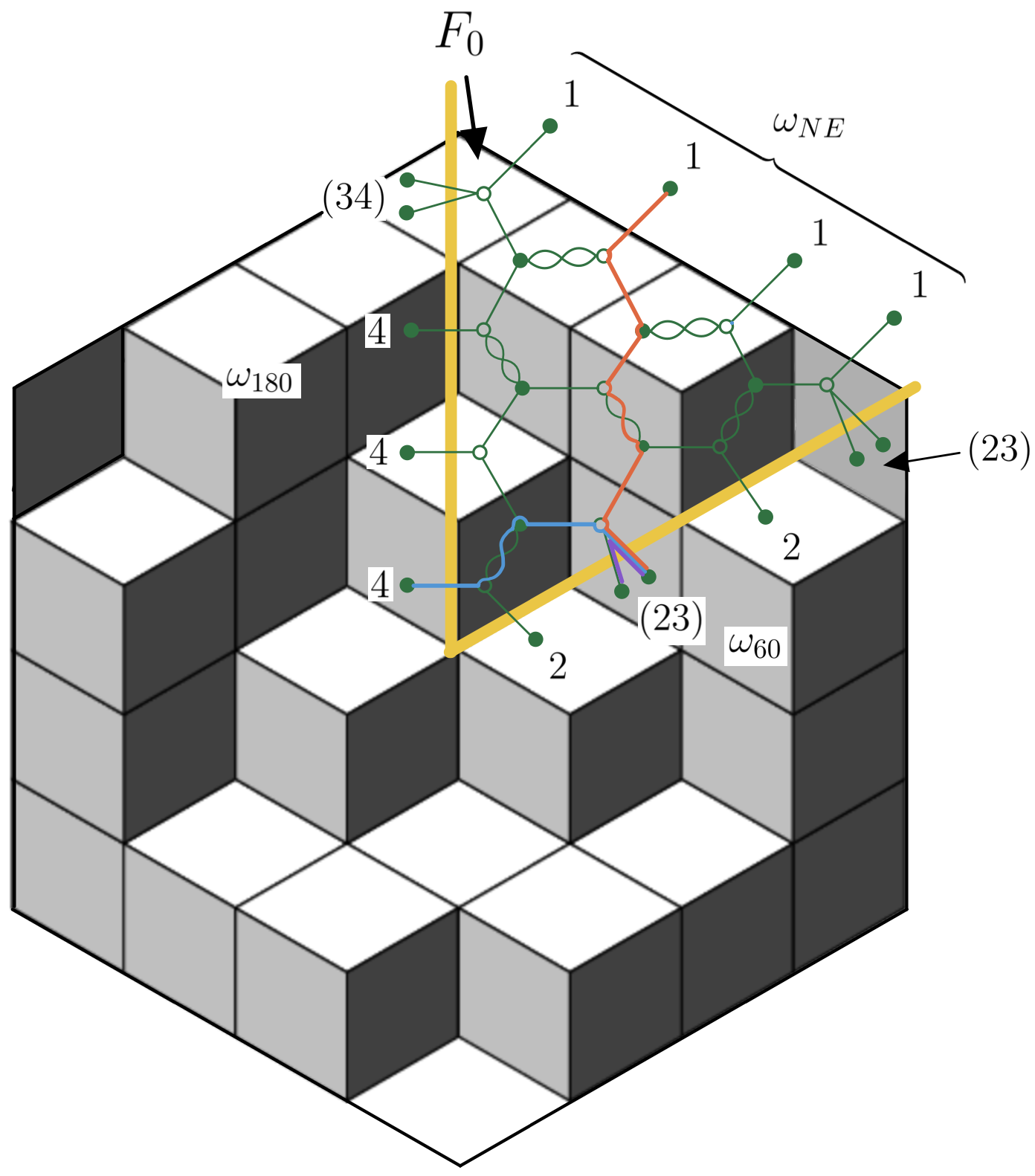}
    \caption{
        A plane partition in $\tspp(4,4,4)$ with the trips drawn within its fundamental domain, as discussed in Example \ref{ex:TSPP-ex for proof}
    }
    \label{fig:TSPP-ex for proof}
    \end{figure}
 \vspace{5mm}         
        $\bullet$ \textbf{TSSCPP:} (See Figure \ref{fig:TSSCPP-ex for proof}.) Following the description of the fundamental domain for any $p\in \tsscpp(2d,2d,2d),$ we obtain a web $W$ in bijection with $p$ restricted to this fundamental domain. In Lemma \ref{lem: global-trip-routes}, we observe that $W$ has three boundaries each corresponding to a subword that makes up the entire word:
        \[
        \omega_{\tsscpp} = \omega_{\widehat{NE}}\omega_{30}\omega_{\widehat{180}}.
        \]
        
        The fact that the subword $\omega_{\widehat{NE}} = 1^{d}$ follows from Proposition \ref{prop: restriction preserves words} and by then applying Theorem \ref{thm:general-lattice-words}.
        We now show that $\omega_{30} = \underbrace{2~\barfour~\cdots~\barfour~2}_{2d-1}.$ In any $p\in\tsscpp(2d,2d,2d),$ $\partial_{30}$ intersects only with east faces of cubes in $p.$ Since an hourglass edge lies on an east face, and this web is reduced (in this case, we mean that there are no consecutive hourglass edges and the graph is bipartite), edges on the boundary are only simple edges that alternate between being incident to black and white vertices. The fundamental domain cuts the east face of $p_{d,d}$ such that only a single edge incident to a black vertex lies on the boundary, then we have $2d-1$ letters in $\omega_{30}.$ Now it remains to show the separation labels alternate between $2$ and $4.$ Consider an arbitrary east face on which $\partial_{30}$ lies. Reading clockwise, we have established that there are two edges, $e_1$ and $e_2,$ where $e_1$ is incident to a black boundary vertex and $e_2$ is incident to a white vertex. 
        Since the trajectory of $\trip_i$ of $e_1$ for is understood for $i=1,2,3$ by the discussion in Proposition \ref{prop: restriction preserves words}, then $\sep(e_1) = 2$ and  $\sep(e_2) = 4.$ 
        
        In particular, for both $e_1$ and $e_2,$ $\trip_1$ ends on the NW boundary, $\trip_2$ ends on the E boundary, and $\trip_3$ ends on the NE boundary. Therefore, $\trip_3(e_1)$ is the only trip separating the base face from the face to the right of $e_1$ as you walk from the black boundary vertex of $e_1$ to the white vertex.
        Since $e_2$ is incident to a white boundary vertex, then all of the trips separate the base face to the left of $e_2$ as you walk from the white boundary vertex to the black vertex incident to $e_2.$ This gives us a complete description of $\omega_{30}.$
        
        We now describe $\omega_{\widehat{180}}.$ As described in the proof for $\omega_{\spp},$ given $p\in\spp(a,a,c)$ the symmetric boundary is given by any permutation of the elements of $\{4^c,(34)^a\}.$ Since any $p\in\tsscpp(2d,2d,2d)$ has reflective symmetry along any diagonal, then by Proposition \ref{prop: restriction preserves words}, the letters for the separation labels are $\{4^{d-1},(34)^d\}.$ 
        Since $p\in\tsscpp(2d,2d,2d),$ then $p\in\spp(2d,2d,2d).$ By restricting from the fundamental domain of $\spp$ to the fundamental domain of $\tsscpp,$ we are arbitrarily losing an edge which would correspond to prepending a $4$ on to $\omega_{\widehat{180}}.$ From \cite{di2004refined}, we observe that there is a bijection between $\tsscpp$ and non-intersecting lattice paths. The end points of these paths end on $\partial_{\widehat{180}}$ and give us the possible orderings for the split edges (which have separation labels (34)) and simple edges (which have separation label 4). Therefore, the word $4\omega_{\widehat{180}}(34)$ must be Yamanouchi. Moreover, this shows that $\omega_{\tsscpp}$ is a Catalan object. 
        
        \begin{example}
        \label{ex:TSSCPP-ex for proof}
        In Figure \ref{fig:TSSCPP-ex for proof} we have an example of a plane partition in $\tsscpp(4,4,4)$ restricted to its fundamental domain with its corresponding web with base face $F_0.$ Each image has trips $1,2,3$ given by three different colors for a type of edge incident to each boundary. We have previously shown the trips on boundaries $\partial_{N}$ and $\b_{\widehat{180}}.$ Thus we only show the trips on the three types of edges found on the boundary given by a restriction to the fundamental domain of totally symmetric plane partitions, that is, $\b_{30}.$ 
        Beginning with the furthest left figure, we show the trips for a simple edge on $\b_{30}.$ The middle figure gives the trips for an edge of a split edge on $\b_{30}.$ The furthest right figure gives the trips for the other split edge on the boundary $\b_{30}.$
        \end{example}
    \begin{figure}[!htbp]
    \centering
    \includegraphics[width=0.25\linewidth]{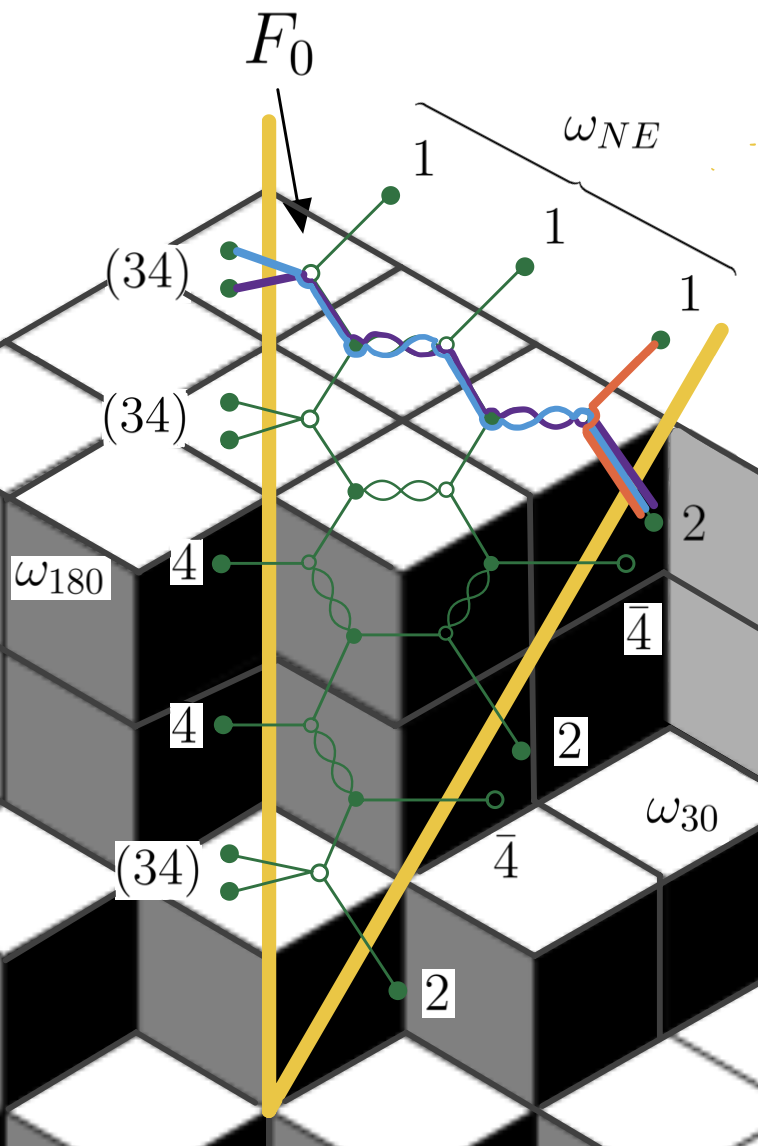}
    \hspace{0.5cm}
    \includegraphics[width=0.25\linewidth]{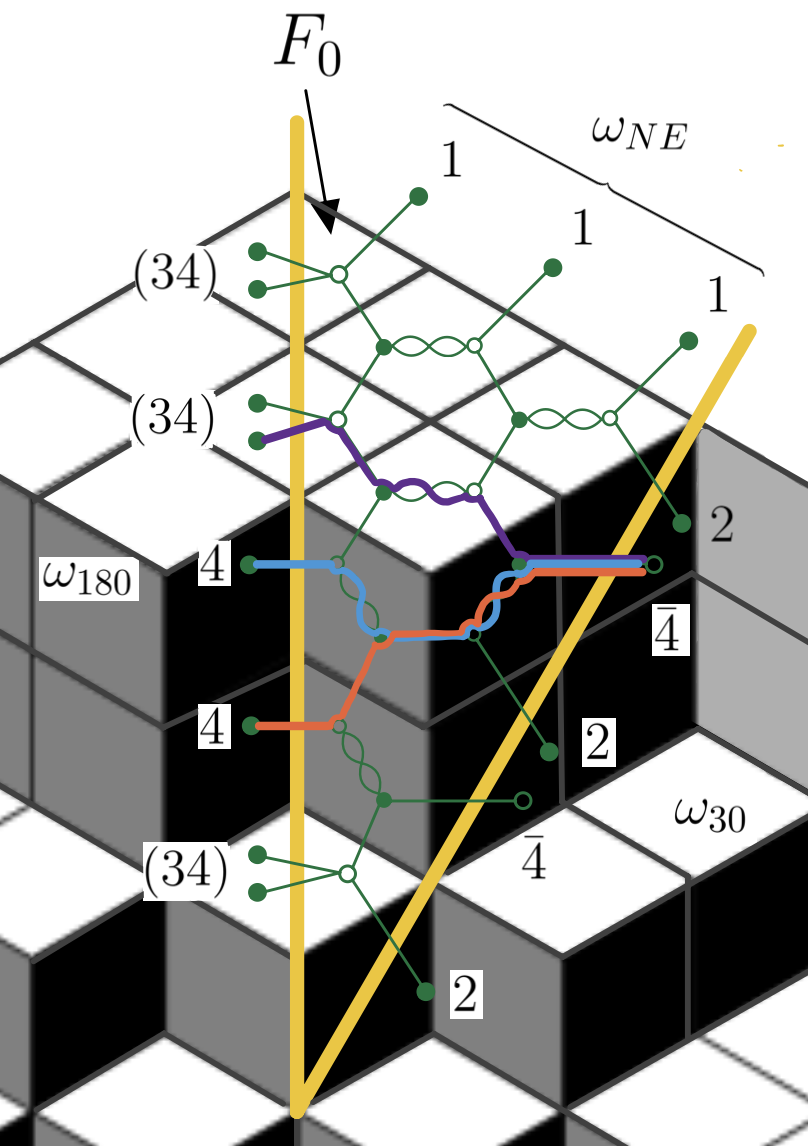}
    \caption{
        A plane partition in $\tsscpp(4,4,4)$ with the trips drawn within its fundamental domain, as discussed in Example \ref{ex:TSSCPP-ex for proof}
    }
    \label{fig:TSSCPP-ex for proof}
    \end{figure}

\end{proof}

\begin{cor}
\label{thm:num-lattice-words-sym}
The following table enumerates the lattice words of each symmetry class from Theorem \ref{thm: lattice-words-symmetry-classes}.
\begin{center}
    \begin{tabular}{|l|l| } 
    \hline
    \rowcolor[HTML]{E6E6E6} Symmetry Class & Number of unique lattice words \\
    \hline
    $\spp(a,a,c)$       & $\binom{a+c}{a}$     \\ 
    $\cspp(a,a,a)$      & $2^a$\\ 
    $\tspp(a,a,a)$      & $1+\binom{2a-1}{a-1} + \displaystyle{\sum_{\ell=1}^{a-1}}\binom{2(a-\ell)-1}{a-\ell-1}$ \\ 
    $\tsscpp(2d,2d,2d)$ & $C_d = \frac{1}{d+1}\binom{2d}{d}$ \\
    \hline
    \end{tabular}
\end{center}
\end{cor}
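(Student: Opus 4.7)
The plan is to derive each count directly from the restrictions recorded in the restated Theorem \ref{thm: lattice-words-symmetry-classes}. In every case the fixed prefix contributes nothing, so only the constrained tail of the boundary word needs to be enumerated.

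For $\spp(a,a,c)$, the tail $\{4^c,(34)^a\}$ is a free arrangement of $c$ copies of $4$ and $a$ copies of $(34)$, giving $\binom{a+c}{a}$ words directly. For $\cspp(a,a,a)$, I would first note that the factor $\{(\overline{31})^m,\barone^{a-m}\}$ admits $\binom{a}{m}$ arrangements and that the positional rule of Theorem \ref{thm: lattice-words-symmetry-classes} determines the paired factor $\{4^{a-m},(34)^m\}$ uniquely; summing over $m$ produces $\sum_{m=0}^{a}\binom{a}{m} = 2^a$. For $\tsscpp(2d,2d,2d)$, the variable subword $\{4^{d-1},(34)^{d-1}\}$ is required to be Yamanouchi in the alphabet $\{4,(34)\}$ after a $4$ is prepended and a $(34)$ is appended. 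Encoding $4 \mapsto \uparrow$ and $(34)\mapsto \downarrow$ converts this condition into the defining property of Dyck paths of length $2d$, and since every positive-length Dyck path must begin with an up-step and end with a down-step, the prepend/append operation is a bijection onto all Dyck paths, giving $C_d = \frac{1}{d+1}\binom{2d}{d}$.

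The main obstacle is $\tspp(a,a,a)$, where two variable factors $\{(23)^{a-m},2^m\}$ and $\{4^{a-m},(34)^m\}$ are coupled by the ballot-type inequality of Theorem \ref{thm: lattice-words-symmetry-classes}. My plan is to fix $m$ and then partition valid pairs by the position $\ell$ of the rightmost $(23)$ in the first factor; the inequality forces a strong compatibility between $\ell$ and the leading $4$-versus-$(34)$ structure of the second factor, so that at fixed $\ell$ the compatible choices of the second factor are counted by a single binomial coefficient $\binom{2(a-\ell)-1}{a-\ell-1}$. The degenerate cases contribute the isolated $1$ (from $m=a$, with empty tails in both factors) and $\binom{2a-1}{a-1}$ (from $\ell = 0$, when the first factor has no $(23)$'s), while the intermediate values $\ell = 1,\ldots,a-1$ assemble into the remaining sum in the table. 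The hardest step will be converting the prefix-versus-suffix comparison in the Yamanouchi-style restriction into a clean lattice-path bijection of the correct size; once this identification is made, the rest of the enumeration is a routine sum of binomial coefficients.
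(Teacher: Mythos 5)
Your counts for $\spp$, $\cspp$, and $\tsscpp$ are correct and follow essentially the same route as the paper: a free shuffle giving $\binom{a+c}{a}$; a sum $\sum_{m}\binom{a}{m}=2^a$ after observing that the second factor is determined by the first; and the Dyck-path identification of the $\tsscpp$ tail (every nonempty Dyck path begins with an up-step and ends with a down-step, so prepending $4$ and appending $(34)$ is a bijection onto all Dyck paths of semilength $d$), giving $C_d$.

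The $\tspp$ case as you have set it up does not work. Your two ``degenerate cases'' are in fact the same case described twice with incompatible contributions: $m=a$ means the first factor is $2^a$, i.e.\ it contains no $(23)$ at all, which is exactly your $\ell=0$; and since $m=a$ forces the second factor to be $(34)^a$, this case contributes exactly $1$ word, not $\binom{2a-1}{a-1}$. In the paper's decomposition the isolated $+1$ accounts for the opposite extreme, the single word $(23)^a\,4^a$ coming from $m=0$. Moreover, the parameter that produces the formula is not the position of the rightmost $(23)$ but the length $\ell$ of the terminal run of $(23)$'s in the first factor. The term $\binom{2a-1}{a-1}$ is the total, over all $m$, of the pairs whose first factor ends in a $2$ (terminal run of length $0$), obtained as the Vandermonde convolution $\sum_{m}\binom{a-1}{m}\binom{a}{m}=\binom{2a-1}{a-1}$; likewise each term $\binom{2(a-\ell)-1}{a-\ell-1}$ for $1\le\ell\le a-1$ is the Vandermonde sum $\sum_{m}\binom{a-\ell-1}{m}\binom{a-\ell}{m}$ counting first and second factors \emph{jointly}. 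So your claim that at fixed $\ell$ ``the compatible choices of the second factor are counted by a single binomial coefficient'' is not the mechanism: for a fixed first factor the number of admissible second factors is not a single binomial, and the binomials in the table only appear after summing over $m$ within each run-length class. To repair the argument you would need to re-parametrize by the terminal $(23)$-run length, count the pairs in each class with a product of two binomials, and then collapse with Vandermonde.
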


\begin{proof} We prove the number of unique lattice words for an hourglass plabic graph corresponding to plane partitions for each of the symmetry classes listed in Theorem \ref{thm: lattice-words-symmetry-classes}.
    \begin{itemize}
        \item $\spp(a,a,c):$ In $w_{SPP}$ the only letters not fixed in the subword are given by any orderings of $c$  $4$s and $a$  $(34)$s. If we consider the $(34)$s as a single letter, then we count the number of ways to organize $c$  $4$s and $a$  $(34)$s in $a+c$  slots. This is given by the counting formula $\binom{a+c}{a}.$
        
        \item $\cspp(a,a,a):$ In $w_{CSPP},$ the only letters not fixed come from the two subwords $\{(\overline{31})^m,\overline{1}^{a-m}\}$ and $\{4^{a-m},(34)^m\}.$ The positions of the letters in the second subword are completely determined by the first subword, so the number of unique lattice words is completely determined by the number of words satisfying the requirements for $\{(\overline{31})^m,\overline{1}^{a-m}\}.$ Since for each $0\leq m \leq r$ we may freely choose the placement of the $m$  $(\overline{13})$s and the $a-m$  $\overline{1}$s then the total number is the simple counting formula
        \[
        \sum_{m=0}^a\binom{a}{m}=2^a.
        \]
        \item $\tspp(a,a,a):$ In $w_{TSPP},$ the only letters not fixed come from the two subwords $\{(23)^{a-m},2^m\}$ and $\{4^{a-m},(34)^m\}.$ 
        This is equivalent to counting the number of words of the following form: Let $\omega=\omega_1\omega_2$ where $\omega_1$ is in letters $x,y$ and $\omega_2$ is in letters $z,w$ and both are of length $a.$ 
        Suppose $\omega$ has the property that if $\omega_1$ ends with $x^\ell,$ then $\omega_2$ begins with $z^\ell.$  
        The first summation counts the number of $\omega$ such that $\omega_1$ ends with an $x.$ 
        The second term gives us the number of $\omega$ such that $\omega_1$ ends with $y^\ell$ for $1\leq\ell\leq a-1.$ 
        The final $+1$ accounts for the word $y^\ell z^\ell.$ Set $y=(23),x=2,z=4,$ and $w=(34)$ and the number of lattice words corresponding to $\tspp$ is given by:
        \begin{align*}
             1 + \sum_{m=0}^{a-1}\binom{a-1}{m}\binom{a}{m}
             & \sum_{\ell=1}^{a-1}\left(\sum_{m=0}^{a-\ell-1}\binom{a-\ell-1}{m}\binom{a-\ell}{m}\right) \\
             & = 1+ \binom{2a-1}{a-1} + \sum_{\ell=1}^{a-1}\binom{2(a-\ell)-1}{a-\ell-1}.
        \end{align*}
       
        \item $\tsscpp(a,a,c):$  In $w_{TSSCPP},$ the only letters not fixed come from the subword $\{4^{d-1},(34)^{d-1}\}.$ So we need only count the number of subwords satisfying the requirement on this word. This requirement, i.e., that the subword is Yamanouchi in $4$s and $(34)$s if we include an extra $4$ at the beginning and $(34)$ at the end, is a well known Catalan object, giving us the $d$-th Catalan number.
    \end{itemize}
\end{proof}

\section{A combinatorial map between invariant spaces}
\label{sec:A combinatorial map between invariant spaces}

In this section we give our second main theorem, Theorem \ref{thm:big invariant to small invariant}. 
We first use the lattice words described in Theorem \ref{thm: lattice-words-symmetry-classes} to give a combinatorial algorithm (Algorithm \ref{alg:lattice-word-to-smaller web}) that takes as an input a $U_q(\sl_4)$-web corresponding to a plane partition in a certain symmetry class and outputs a $U_q(\slr)$-web for $r=2$ or $r=3.$ 

The symmetry classes for which the algorithm can be applied are (1) symmetric, (2) totally symmetric, (3) totally symmetric self-complementary. 
In cases (1) and (3), the algorithm results in a $U_q(\sl_2)$-web, which is a decorated non-crossing matching. 
A \emph{(perfect) non-crossing matching} is a graph of $n$ points on a plane with edge segments between  vertices so that every vertex has valance one and no two edges cross, i.e., the graph is planar. 
In case (2), the algorithm results in a $U_q(\sl_3)$-web. The algorithm relies heavily on our ability to classify the lattice words of symmetry classes as we have done here, which was only possible due to the recent work in \cite{gaetz2023rotation}.

The implications of this algorithm is that we can construct a map from certain invariants in 
$\mathrm{Hom}_{U_q(\sl_4)}\left(\bigwedge\nolimits_q^{\underline{c}(\omega)}V_q,\C(q)\right)$
to invariants in 
$\mathrm{Hom}_{U_q(\slr)}\left(\bigwedge\nolimits_q^{\underline{c}(\omega)}V_q,\C(q)\right)$
for $k=2$ or $k=3,$ where $\underline{c}(\omega)=(c_1,\ldots,c_n)$ with $c_i$ equal to $1$ if $\omega_i$ is positive and $\barone$ if $\omega_i$ is negative. We leave further exploration of the representation theory implications to future work.

\begin{algorithm}[Lattice words of symmetry classes to $U_q(\slr)$-webs]~
\label{alg:lattice-word-to-smaller web}

    \textbf{Input:} A plane partition $p$ which is either in $\spp(a,a,c),\tspp(a,a,a),$ \newline 
    or $\tsscpp(2d,2d,2d).$
    \begin{enumerate}
        \item Consider the hourglass plabic graph $W$ constructed using the fundamental domain of the symmetry class of $p$. Let $\omega$ denote the boundary word of $W$, as described in Theorem \ref{thm: lattice-words-symmetry-classes}.
        \item Remove all fixed letters from $\omega$ that are not a $\barfour.$ By \emph{fixed letters}, we mean those letters which are the same in any two lattice words $\omega,\omega'$ corresponding to $W,W'$ from the same symmetry class. 
        \item Set each letter $x\in\omega$ to be $x-r \mod r,$ giving a word of $1$s, $2$s, and $\bartwo$s in the case of SPP and TSCPP. In the case of TSPP, we obtain a word in letters $1,2,$ and $3.$ This gives us a new lattice word, $\hat{\omega}$ that is in bijection with an oscillating tableaux of height $r$ for $r=2$ or $r=3,$ depending on the symmetry class of $p$.  
        \item When $r=2$ use the growth rules from Definition \ref{def:growth-rules-basic}. When $r=3,$ use the growth rules from \cite{kuperberg,petersen2009promotion}.
    \end{enumerate}
    \textbf{Output:} A $U_q(\slr)$-web, $\widehat{W}.$  In the case of symmetric or totally symmetric self-complementary plane partitions, $r=2$ and the output is a non-crossing matching with special marked edges. In the case of a totally-symmetric plane partition, $r=3$ and the output is a non-elliptic $U_q(\sl_3)$-web.
\end{algorithm}

We find Algorithm \ref{alg:lattice-word-to-smaller web} to be of particular interest not only due to its simplicity, but because it is not well-defined in general.
For example, this algorithm cannot be applied to a plane partition which is cyclically symmetric. 
Algorithm \ref{alg:lattice-word-to-smaller web} is really a story about reducing oscillating tableaux within a symmetry class. 
In the cases that $p$ is a $\spp, \tspp,$ or a $\tsscpp,$ of the letters in the the lattice word corresponding to the web given by $p,$ the first two rows of the associated oscillating tableau are given by these fixed letters. 
So we can ultimately delete those rows from the tableau. 
In the case $p$ is a $\cspp,$ then the lattice word has variance within all of the rows. 
So removing any of them in a reduction such as the one given by Algorithm \ref{alg:lattice-word-to-smaller web} would result in a catastrophic loss of information. 

In Section \ref{sec:ex-tab-to-NCM} we give the lattice words for the benzene equivalence classes for the webs corresponding to $\tsscpp(6,6,6)$ and demonstrate Algorithm \ref{alg:lattice-word-to-smaller web} for each word. 

Before proceeding to the main theorem of this section, we want to give an explanation of why Algorithm \ref{alg:lattice-word-to-smaller web} gives us the desired output. Step $1$ holds easily. Steps $2$ and $3$ we justify now.

The subword $\omega_{NE}$ coming from the boundary $\partial_{NE}$ is $1^a$ in the case of $\spp$ and $\tspp,$ and in the case of $\tsscpp$ it is $1^d.$ 
By Theorem \ref{thm: lattice-words-symmetry-classes} there are no other $1$s in the lattice words. 
Thus, the process of removing the $1$s and adjusting the other letters in $\omega$ by $-1,$ corresponds to removing a row of boxes in the oscillating tableau $T(\omega)$ and adjusting the entries in the tableau accordingly. 
Therefore, the modified word remains an oscillating tableau by removing the $1$s from $\omega.$ 
In the case of $\spp$ and $\tsscpp,$ a similar argument holds for removing the $2$s from the lattice word corresponding to a $\spp$ or $\tsscpp.$ 
Within the $\tspp,$ the letters $2$ vary in their placement. 
So removing them would result in a loss of information from the original oscillating tableau. 
Since oscillating tableaux in a rectangle with $r$ rows are in bijection with $U_q(\sl_r)$-webs, it follows that as output of the algorithm we obtain, in the case of $\spp$ and $\tsscpp,$ $U_q(\sl_2)$-webs, whereas in the case of $\tspp,$ we obtain $U_q(\sl_3)$-webs.

\begin{figure}[!htbp]
    \centering
    \begin{center}
    \begin{tabular}{|l|l|l| } 
    \hline
    \rowcolor[HTML]{E6E6E6} Symmetry Class & $\omega$ after Step 2 & $\hatomega$   \\
    \hline
    $\spp(a,a,c)$       & $\{4^c,(34)^a\}$                           & $\overline{2}^c\{2^c,(12)^a\}$\\ 
    $\tspp(a,a,a)$      & $\{(23)^{a-m},2^m\}~\{4^{a-m},(34)^m\}$  & $\{(12)^{a-m},1^m\}~\{3^{a-m},(23)^m\}$ \\ 
    $\tsscpp(2d,2d,2d)$ & $\{4^{d-1},(34)^{d-1}\}$                 & $\overline{2}^{d-1}\{2^{d-1},(12)^{d-1}\}$\\
    \hline
    \end{tabular}
\end{center}
    \caption{This chart gives the lattice word, $\hatomega,$ associated with the output of Algorithm \ref{alg:lattice-word-to-smaller web}, according to the given symmetry class. Per Theorem \ref{thm: lattice-words-symmetry-classes}, there are specific requirements on the subwords, which affect the ordering of the letters in $\hatomega.$ We refer the reader to the referenced theorem for those details.}
    \label{fig:lattice-word-to-omega-hat}
\end{figure}

\begin{thm}
\label{thm:big invariant to small invariant}
    Algorithm \ref{alg:lattice-word-to-smaller web} defines a projection map
        \begin{equation*}
            \mathrm{Hom}_{U_q(\sl_4)}
            \left(
            \bigwedge\nolimits_q^{\underline{c}(\omega)}V_q,\C(q)
            \right)
            \to
            \mathrm{Hom}_{U_q(\slr)}\left(
            \bigwedge\nolimits_q^{\underline{c}(\hat{\omega})}V_q,\C(q)
            \right)       
        \end{equation*}
    given by $[W]_q\mapsto [\widehat{W}]_q,$ where $W$ is a web with lattice word $\omega$ corresponding to a symmetric, totally symmetric, or totally symmetric self complementary plane partition, $\widehat{W}$ is the web with lattice word $\hat{\omega}$ resulting from the previous algorithm. When the plane partition is symmetric or totally symmetric self-complementary, then $r=2,$ whereas if the plane partition is totally symmetric, then $r=3.$ 
\end{thm}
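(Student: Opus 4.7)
My plan is to reduce the theorem to two claims about Algorithm \ref{alg:lattice-word-to-smaller web}, after which extension to a linear map follows immediately from the web-basis property on both sides. Concretely, I will show (i) that for each of the three symmetry classes the output word $\hat\omega$ is a Yamanouchi lattice word on the smaller alphabet $\{\pm 1,\ldots,\pm r\}$ with $r=2$ or $3$, and (ii) that the growth rules unambiguously produce from $\hat\omega$ a web $\widehat{W}$ lying in a basis for the target invariant space. Granting these, the assignment $[W]_q\mapsto [\widehat{W}]_q$ on the basis from Theorem \ref{thm:web-basis-GPPSS} extends uniquely to a linear map $\mathsf{Hom}_{U_q(\sl_4)}(\cdots)\to\mathsf{Hom}_{U_q(\sl_r)}(\cdots)$, and surjectivity onto the image is addressed at the end.

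For (i), I proceed by case analysis using Theorem \ref{thm: lattice-words-symmetry-classes}. The key structural observation is that removing a letter $i$ from $\omega$ deletes the corresponding step in the oscillating tableau $T(\omega)$, and if the removed letters account for every step that ever acts on a given row, that row is uniformly empty throughout $T(\omega)$ and can be excised, lowering the row count by one. In the SPP case, the prefix $1^a$ contains every letter touching row one, and $\omega_{SE}=2^a$ contains every letter touching row two (no letter of $\omega_E=\bar 4^c$ or $\omega_{180}\in\{4^c,(34)^a\}$ touches rows one or two). Hence removing the $1$s and $2$s and shifting the remaining indices by $-2$ yields the lattice word of a valid two-row oscillating tableau, which is automatically Yamanouchi. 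The TSSCPP case is analogous after accounting for the prefix $1^d$ and the $2$s embedded in the alternating block $2\,\bar 4\,2\,\cdots\,\bar 4\,2$. For TSPP, only row one is fixed, so the algorithm removes the $1$s and shifts by $-1$, producing a three-row oscillating tableau.

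For (ii), the growth algorithms of Kuperberg \cite{kuperberg, petersen2009promotion} for $U_q(\sl_3)$-webs and Definition \ref{def:growth-rules-basic} for $U_q(\sl_2)$-webs assign to every Yamanouchi lattice word a basis element of the corresponding invariant space, so each $\widehat{W}$ is a bona fide basis web. Since distinct webs in the same benzene class share a boundary word and hence yield the same $\widehat{W}$, no ambiguity arises on the basis indexed by Theorem \ref{thm:web-basis-GPPSS}, and the linear extension is well defined. To justify the term \emph{projection}, the final step is to observe that the constraints on $\hat\omega$ listed in Figure \ref{fig:lattice-word-to-omega-hat} are precisely the constraints Theorem \ref{thm: lattice-words-symmetry-classes} places on the variable portion of $\omega$, so every valid $\hat\omega$ lifts to some $\omega$ in the correct symmetry class, giving surjectivity onto the image.

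The main obstacle I anticipate lies in the TSPP case, where the shift is only by $-1$ and the target alphabet $\{\pm 1,\pm 2,\pm 3\}$ remains rich enough that the Yamanouchi inequalities intertwine all three surviving rows. Verifying that the interleaving constraint between $\{(23)^{a-m},2^m\}$ and $\{4^{a-m},(34)^m\}$ translates correctly under the shift, and that the resulting three-row tableau satisfies every prefix Yamanouchi inequality, will require careful bookkeeping of the running row sizes.
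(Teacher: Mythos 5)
Your proposal is correct and follows essentially the same route as the paper: describe the boundary words via Theorem \ref{thm: lattice-words-symmetry-classes}, observe that the fixed letters account for all of the activity in the top row(s) of the oscillating tableau so those rows can be excised and the remaining letters shifted down, and then feed the resulting lattice word into the $U_q(\sl_2)$ or $U_q(\sl_3)$ growth rules. The TSPP verification you defer is closed in the paper by the single observation that the removed letters are all $1$s, precede everything else, and are smaller than every remaining letter, so the Yamanouchi inequalities among rows $2$--$4$ are inherited verbatim; the one point the paper makes explicit that you leave implicit is that $\underline{c}(\hat{\omega})$ does not depend on which $W$ in the symmetry class one starts with (the variable letters never change sign or multiplicity), so that all images land in a single target $\mathsf{Hom}$ space.
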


\begin{proof}
    Let $p$ be a plane partition which is either in $\spp(a,a,c),\tspp(a,a,a),$ or \newline $\tsscpp(2d,2d,2d).$ 
    If $W$ is the hourglass plabic graph in bijection with the fundamental domain of the desired symmetry class of $p$, then $W$ has a lattice word $\omega$ whose form is given by Theorem \ref{thm: lattice-words-symmetry-classes}.
    If $p\in\spp(a,a,c),$ then by removing the fixed letters in $\omega$, the resulting word is $\barfour^c~\{4^c,(34)^a\}.$
    If $p\in\tsscpp(2d,2d,2d),$ then the resulting word is $\barfour^d~\{4^d,(34)^d\}.$
    The word also maintains the properties required of it in Theorem \ref{thm: lattice-words-symmetry-classes}.
    By subtracting $r = 2$ from these letters, we obtain the word
    $\bartwo^c~\{2^c,(12)^a\}.$ 
    This process is well-defined, as it is the same as removing the first two rows from the oscillating tableau with which $\omega$ is in bijection. 
    The information in those rows is the same for any other web in bijection with a plane partition in the desired symmetry class, and is thereby redundant. (These are what we call \emph{fixed letters}.)
    In the case $p\in\tsscpp(2d,2d,2d),$ by removing all fixed letters, we are also removing the furthest box to the right in rows $3$ and $4,$ since that information is also redundant. 
    
    One may think of these fixed $\barfour$s and singleton $4$s in the original words as paired, since in $W$ they are connected by $\trip_2$ (Theorem \ref{thm: lattice-words-symmetry-classes}).

    In the case that $p\in\tspp(a,a,a),$ according to Theorem \ref{thm: lattice-words-symmetry-classes}, the lattice word is \newline $1^a\{(23)^{a-m},2^m\}~\{4^{a-m},(34)^m\},$ with certain requirements upon ordering of the letters in the two subwords $\{\cdots\}$ given in the statement of the theorem. 
    Removing the fixed letters and adjusting them results in the word $\{(12)^{a-m},1^m\}~\{3^{a-m},(23)^m\}.$
    This process corresponds to removing the first row, for the same reason as in the previous two cases. 

    In all the above cases, we will call the new adjusted words $\hatomega.$ 
    Since the fixed letters that were removed were less than any of the remaining letters, then the word is still a lattice word. This holds for both a two row or three row oscillating tableau following Definition \ref{def:oscillating-tab}. 
    From here, the map from a lattice word to a $U_q(\sl_2)$- or $U_q(\sl_2)$-web is known, (Definition \ref{def:growth-rules-basic} and \cite{kuperberg}). We call the new web $\widehat{W}.$ In the cases where $p\in\spp(a,a,c)$ or $p\in\tsscpp(2d,2d,2d)$ the growth Rules 1 and 2 of Definition \ref{def:growth-rules-basic}  are the only used rules. Rule $1$ ensures that the pairs $(12)$ coming from split edges in $W$ are connected by an edge marked with a white vertex, while Rule $2$ ensures that the $\bartwo$s and singleton $2$s are connected by a simple edge. Since the $(12)$ pairs are always directly next to each other in $\hat{\omega},$ their edges in $\widehat{W}$ never cross. Furthermore, $\hat{\omega}$ begins with $\bartwo$s and so applying growth rule $2$ causes no edges to cross. This leaves us with a reduced $U_q(\sl_2)$-web which is a marked non-crossing matching.

    We now explain why this algorithm gives a map on invariants. 
    The $U_q(\sl_4)$-webs $W$ and $W'$ associated to plane partitions $p$ and $p'$ in the same symmetry class $\spp(a,a,c),\tspp(a,a,a),$ or $\tsscpp(2d,2d,2d),$ but with different lattice words $\omega,\omega'$ give distinct invariants $[W]_q$ and $[W']_q.$ Although $[W]_q$ and $[W']_q$ correspond to different lattice words, they still have the same list of edge weights for those edges incident to boundary vertices. Call this list $\underline{c}(\omega) = (c_1,\ldots,c_n),$ which can be obtained by checking the parity of each value of $\omega.$ So then, $[W]_q,[W']_q$ are elements of 
    $
    \mathrm{Hom}_{U_q(\sl_4)}
    \left(
    \bigwedge\nolimits_q^{\underline{c}(\omega)}V_q,\C(q)
    \right).
    $
    
    Applying the algorithm results in distinct lattice words $\hatomega$ and $\widehat{\omega'}$ which give distinct invariants, $[\widehat{W}]_q$ and $[\widehat{W'}]_q,$ respectively. The algorithm does not change the relative order of the remaining letters, nor does it change the parity. Thus, the list of edge weights $\underline{c}(\hatomega) = \underline{c}(\widehat{\omega}')$ Thus, $[\widehat{W}]_q$ and $[\widehat{W'}]_q$ are in 
    $
    \mathrm{Hom}_{U_q(\slr)}\left(
    \bigwedge\nolimits_q^{\underline{c}(\hat{\omega})}V_q,\C(q)
    \right).
    $
    Therefore, this map exists and is well-defined for all three symmetry classes.
 \end{proof}

\subsection{Example of Theorem \ref{thm:big invariant to small invariant}: $U_q(\sl_4)$ to $U_q(\sl_2)$}
\label{sec:ex-tab-to-NCM}
We demonstrate the process outlined in the proof of Theorem \ref{thm:big invariant to small invariant} for all five unique lattice words obtained from the seven webs in bijection with the seven plane partitions in $\tsscpp(6,6,6).$ 
A visualization of this can be seen explicitly in Figure \ref{fig:6-ncms from sl4 to sl2 map}. For each word we give the corresponding lattice word $\omega.$ 
Below $\omega$ is the  oscillating tableaux. 
The letters in blue are those that actually vary between tableaux; those in black remain the same. 
Underneath the  oscillating tableau on $16$ letters is the unique oscillating tableaux obtained by removing all of the black letters and their corresponding boxes if those boxes are empty, and then taking each blue letter modulo $9.$ 
We then insert $\overline{1}$ and $\overline{2}$ where appropriate. Underneath this new shifted  oscillating tableaux is the $U_q(\sl_2)$-web obtained by using the growth rules. This is a marked perfect non-crossing matching. 

\begin{figure}[htbp]
\begin{center}

\begin{tikzpicture}[scale=0.9]
\node at (-6,2.5) {\tiny $\omega_1~=~1~1~1~2~\overline{4}~2~\overline{4}~2~\textcolor{teal}{4~(34)~4~(34)}~(34)$};
\node (pic) at (-6,0) {\includegraphics[scale=0.2]{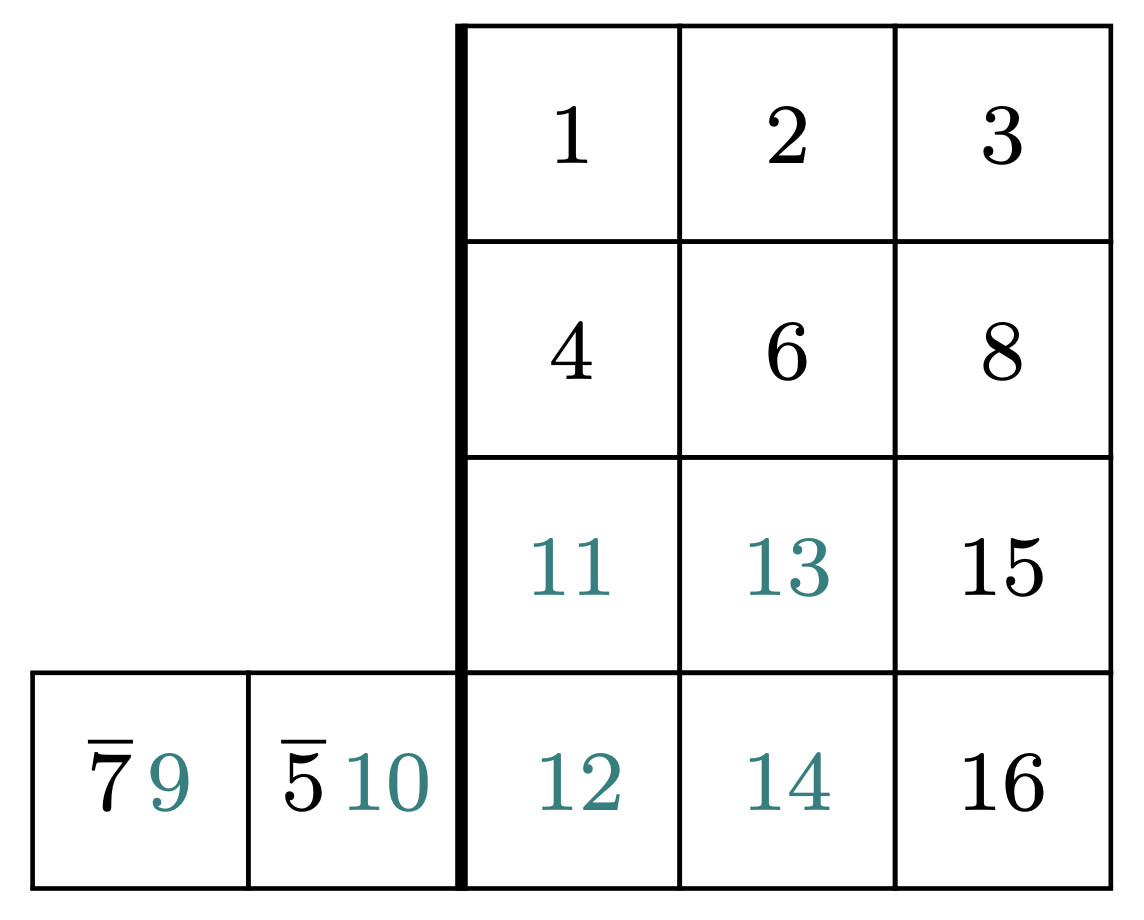}};
\node (pic) at (-6,-3) {\includegraphics[scale=0.2]{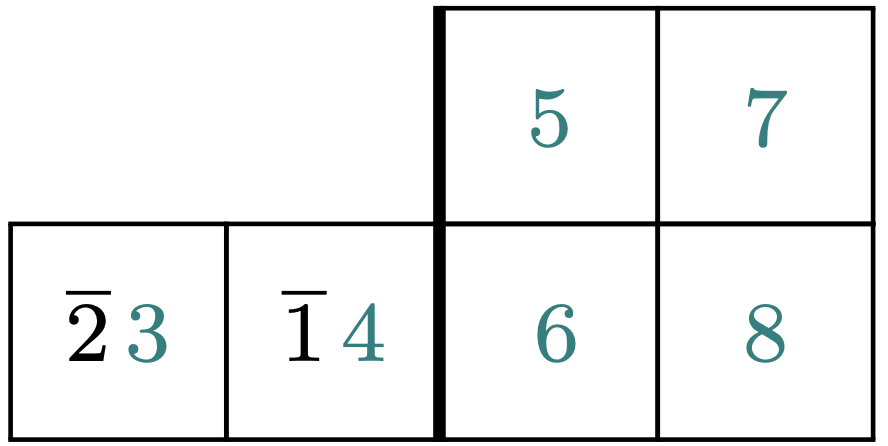}};
\begin{scope}[scale=0.6, line width=1pt,shift={(-14,-9)}]
  \draw[dashed, gray, line width=0.8pt] (0.5,0) -- (8.5,0);

  \foreach \x in {1,...,8} {
    \node[circle, draw, minimum size=2mm, inner sep=0pt, fill=white] (v\x) at (\x,0) {};
    \node[above=2pt, font=\footnotesize] at (v\x.north) {\x};
  }

  \foreach \x in {3,4,5,6,7,8}
    \filldraw[fill=black, draw=black, line width=1pt] (v\x) circle (1.5mm);

  \draw (v1) to[out=270, in=270] (v4);
  \draw (v2) to[out=270, in=270] (v3);

  \draw[postaction={decorate, decoration={
    markings, mark=at position 0.5 with {\node[fill=white, draw=black, circle, minimum size=2mm, inner sep=0pt, line width=1pt] {};}}}]
    (v5) to[out=270, in=270] (v6);
  \draw[postaction={decorate, decoration={
    markings, mark=at position 0.5 with {\node[fill=white, draw=black, circle, minimum size=2mm, inner sep=0pt, line width=1pt] {};}}}]
    (v7) to[out=270, in=270] (v8);
\end{scope}

\node at (0,2.5) {\tiny $\omega_2~=~1~1~1~2~\overline{4}~2~\overline{4}~2~\textcolor{teal}{4~4~(34)~(34)}~(34)$};
\node (pic) at (0,0) {\includegraphics[scale=0.2]{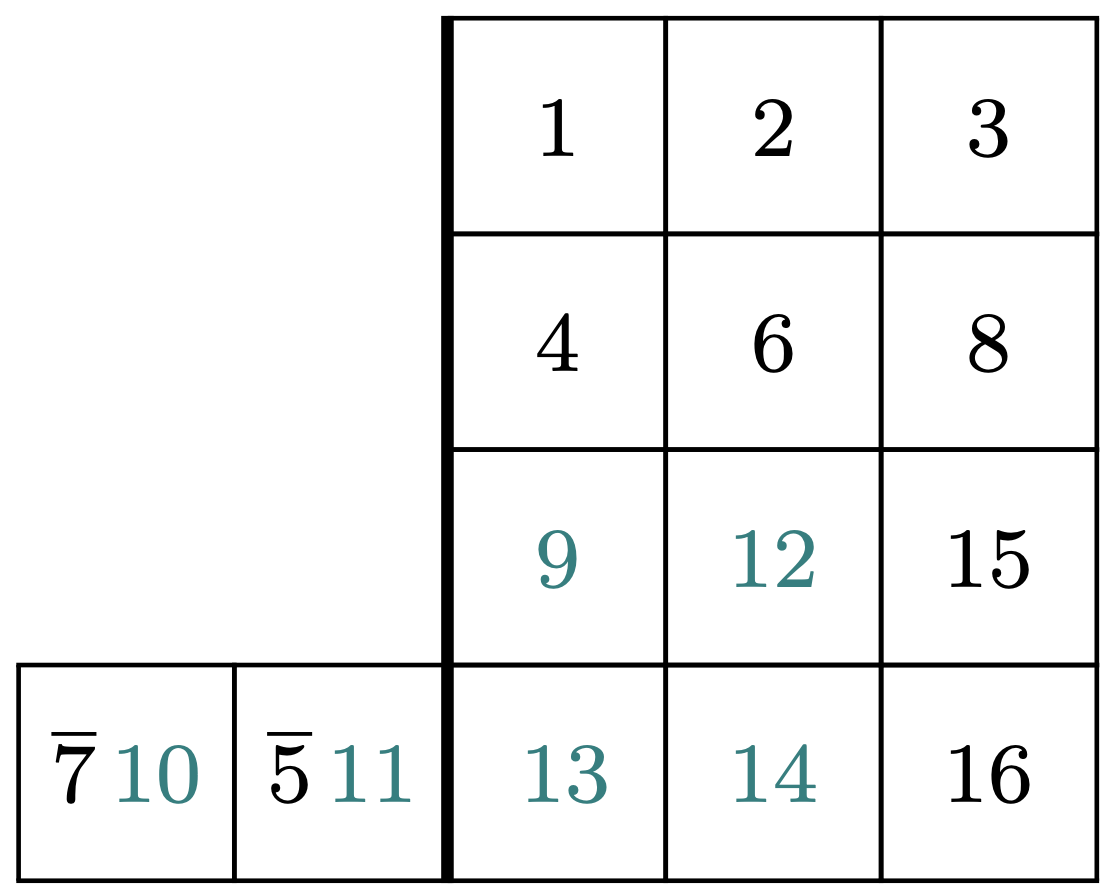}};
\node (pic) at (0,-3) {\includegraphics[scale=0.2]{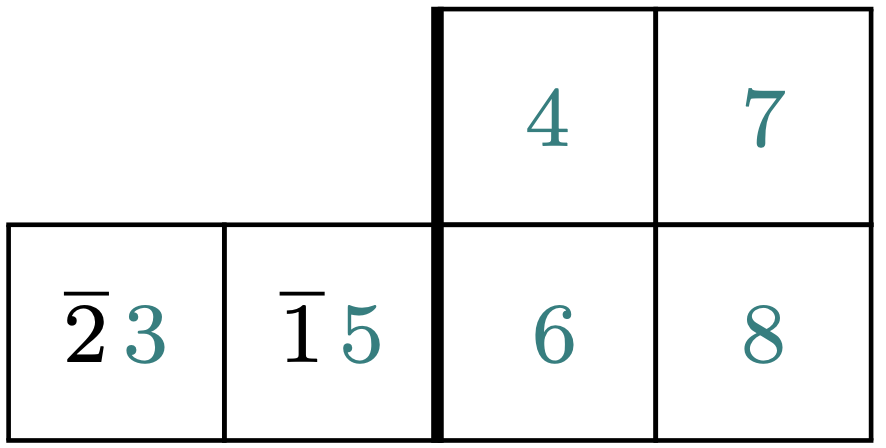}};
\begin{scope}[scale=0.6, line width=1pt, shift={(-4,-9)}]
  \draw[dashed, gray, line width=0.8pt] (0.5,0) -- (8.5,0);

  \foreach \x in {1,...,8} {
    \node[circle, draw, minimum size=2mm, inner sep=0pt, fill=white] (v\x) at (\x,0) {};
    \node[above=2pt, font=\footnotesize] at (v\x.north) {\x};
  }

  \foreach \x in {3,4,5,6,7,8}
    \filldraw[fill=black, draw=black, line width=1pt] (v\x) circle (1.5mm);

  \draw (v1) to[out=270, in=270] (v8);
  \draw (v2) to[out=270, in=270] (v5);

  \draw[postaction={decorate, decoration={
    markings, mark=at position 0.5 with {\node[fill=white, draw=black, circle, minimum size=2mm, inner sep=0pt, line width=1pt] {};}}}]
    (v3) to[out=270, in=270] (v4);

  \draw[postaction={decorate, decoration={
    markings, mark=at position 0.5 with {\node[fill=white, draw=black, circle, minimum size=2mm, inner sep=0pt, line width=1pt] {};}}}]
    (v6) to[out=270, in=270] (v7);
\end{scope}

\node at (6,2.5) {\tiny $\omega_3~=~1~1~1~2~\overline{4}~2~\overline{4}~2~\textcolor{teal}{(34)~4~4~(34)}~(34)$};
\node (pic) at (6,0) {\includegraphics[scale=0.2]{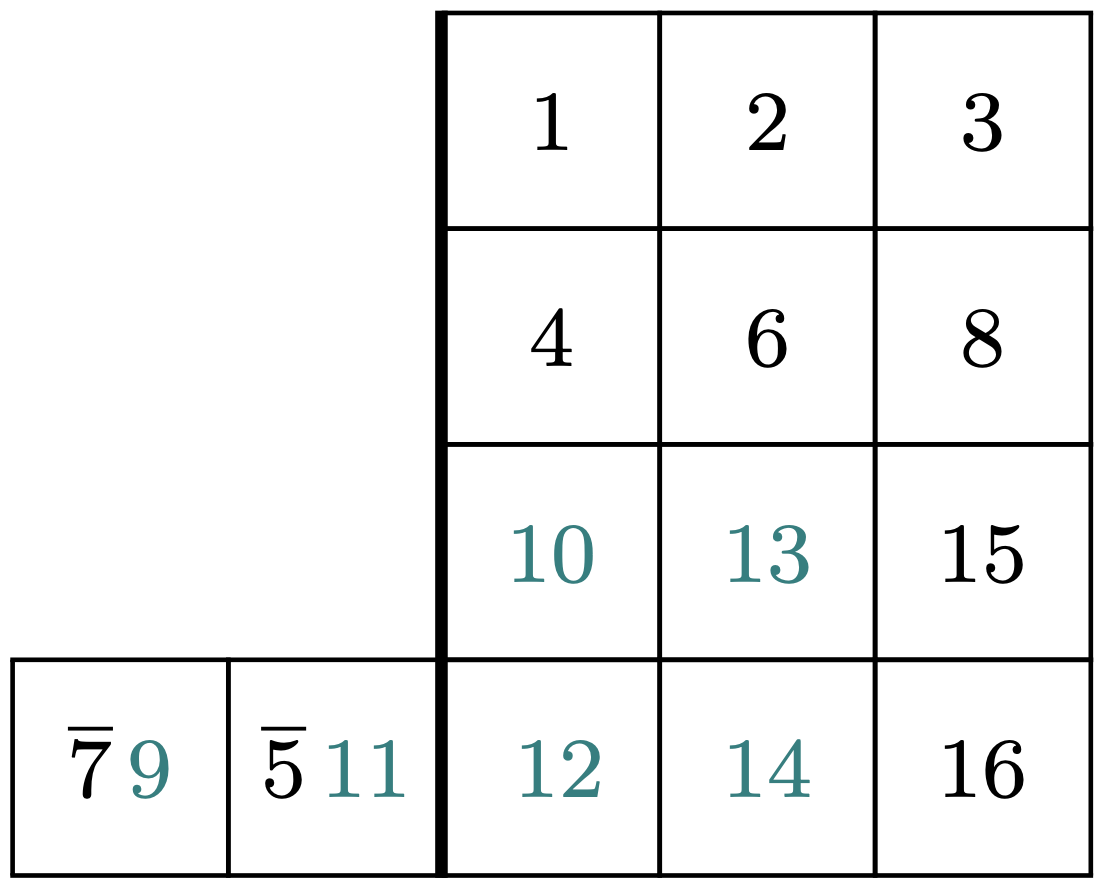}};
\node (pic) at (6,-3) {\includegraphics[scale=0.2]{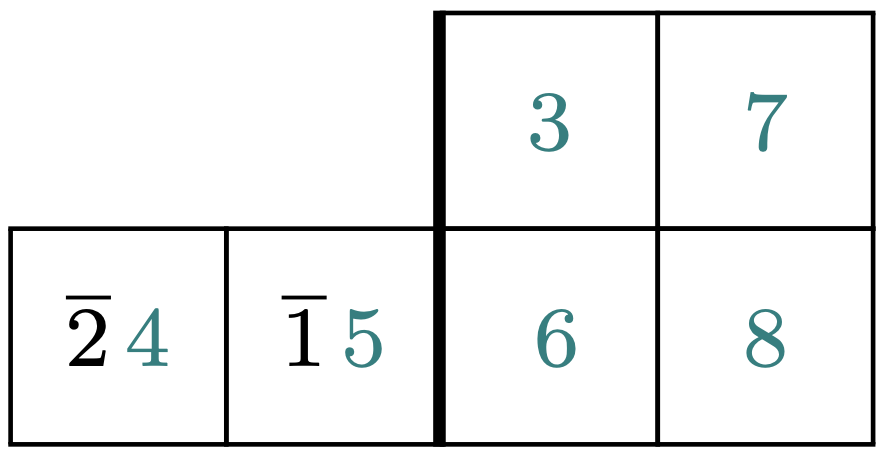}};

\begin{scope}[scale=0.6, line width=1pt, shift={(6,-9)}]
  \draw[dashed, gray, line width=0.8pt] (0.5,0) -- (8.5,0);

  \foreach \x in {1,...,8} {
    \node[circle, draw, minimum size=2mm, inner sep=0pt, fill=white] (v\x) at (\x,0) {};
    \node[above=2pt, font=\footnotesize] at (v\x.north) {\x};
  }

  \foreach \x in {3,4,5,6,7,8}
    \filldraw[fill=black, draw=black, line width=1pt] (v\x) circle (1.5mm);

  \draw (v1) to[out=270, in=270] (v6);
  \draw (v2) to[out=270, in=270] (v5);

  \draw[postaction={decorate, decoration={
    markings, mark=at position 0.5 with {\node[fill=white, draw=black, circle, minimum size=2mm, inner sep=0pt, line width=1pt] {};}}}]
    (v3) to[out=270, in=270] (v4);

  \draw[postaction={decorate, decoration={
    markings, mark=at position 0.5 with {\node[fill=white, draw=black, circle, minimum size=2mm, inner sep=0pt, line width=1pt] {};}}}]
    (v7) to[out=270, in=270] (v8);
\end{scope}

\node at (-3.5,-8.5) {\tiny $\omega_4~=~1~1~1~2~\overline{4}~2~\overline{4}~2~\textcolor{teal}{(34)~4~(34)~4}~(34)$};
\node (pic) at (-3.5,-11) {\includegraphics[scale=0.2]{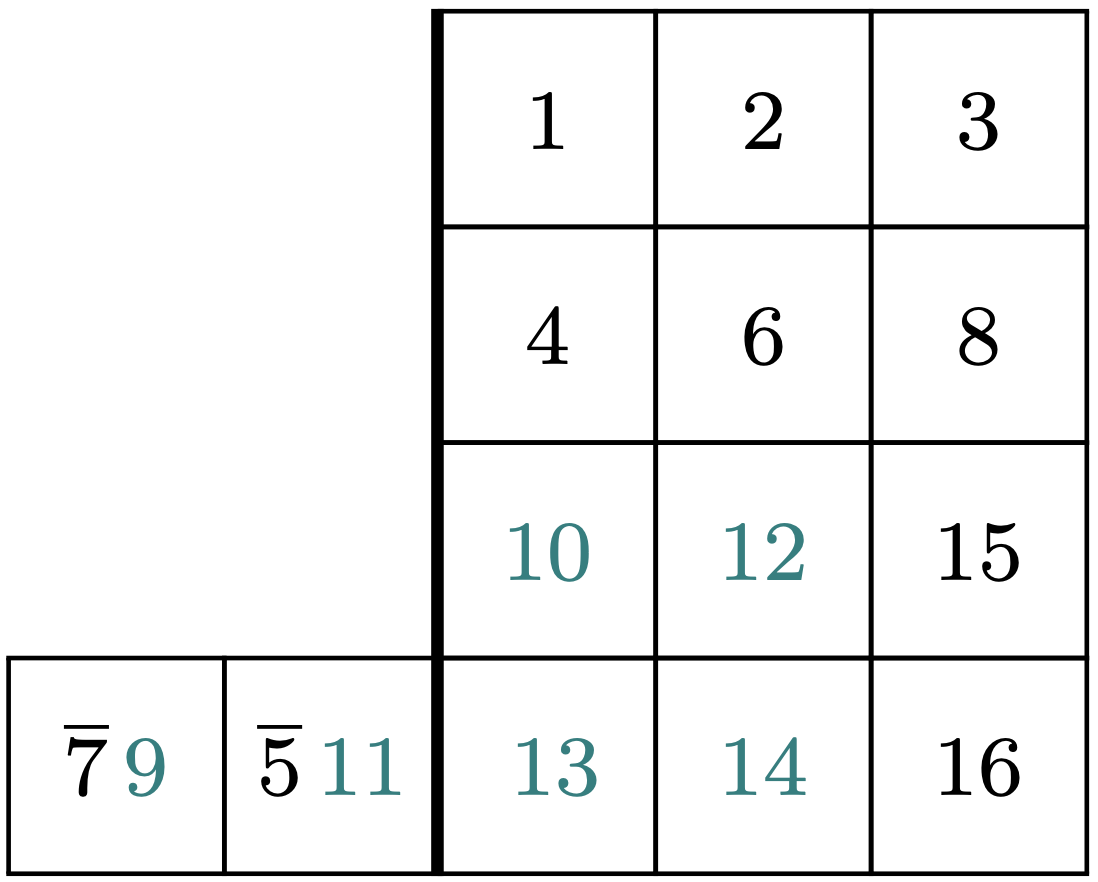}};
\node (pic) at (-3.5,-14) {\includegraphics[scale=0.2]{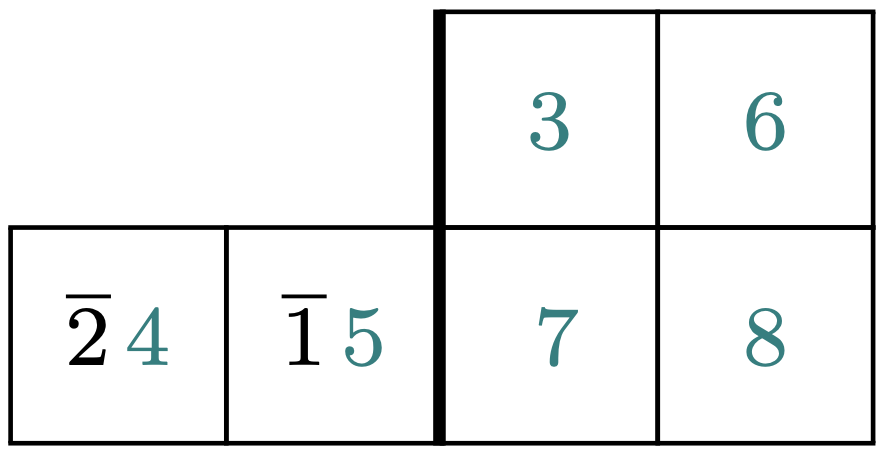}};

\begin{scope}[scale=0.6, line width=1pt, shift={(-10,-27)}]
  \draw[dashed, gray, line width=0.8pt] (0.5,0) -- (8.5,0);

  \foreach \x in {1,...,8} {
    \node[circle, draw, minimum size=2mm, inner sep=0pt, fill=white] (v\x) at (\x,0) {};
    \node[above=2pt, font=\footnotesize] at (v\x.north) {\x};
  }

  \foreach \x in {3,4,5,6,7,8}
    \filldraw[fill=black, draw=black, line width=1pt] (v\x) circle (1.5mm);

  \draw (v1) to[out=270, in=270] (v6);
  \draw (v2) to[out=270, in=270] (v3);

  \draw[postaction={decorate, decoration={
    markings, mark=at position 0.5 with {\node[fill=white, draw=black, circle, minimum size=2mm, inner sep=0pt, line width=1pt] {};}}}]
    (v4) to[out=270, in=270] (v5);

  \draw[postaction={decorate, decoration={
    markings, mark=at position 0.5 with {\node[fill=white, draw=black, circle, minimum size=2mm, inner sep=0pt, line width=1pt] {};}}}]
    (v7) to[out=270, in=270] (v8);
\end{scope}

\node at (3.5,-8.5) {\tiny $\omega_5~=~1~1~1~2~\overline{4}~2~\overline{4}~2~\textcolor{teal}{4~(34)~(34)~4}~(34)$};
\node (pic) at (3.5,-11) {\includegraphics[scale=0.2]{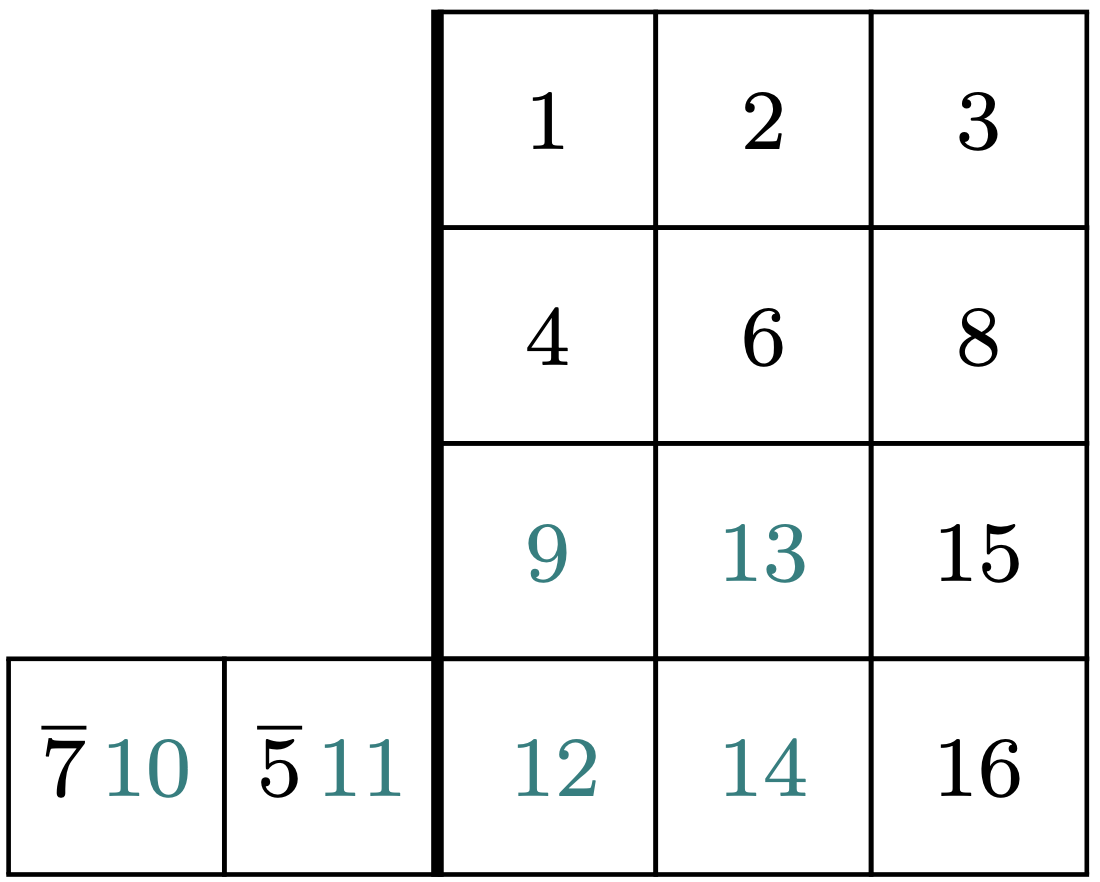}};
\node (pic) at (3.5,-14) {\includegraphics[scale=0.2]{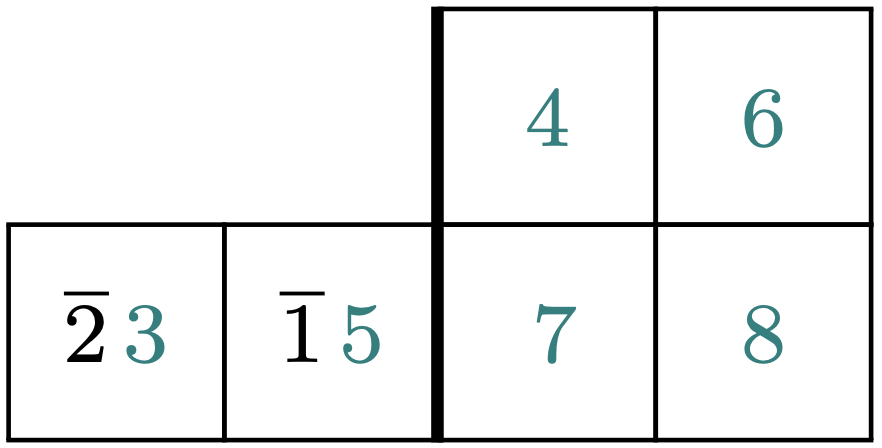}};
\begin{scope}[scale=0.6, line width=1pt, shift={(2,-27)}]
  \draw[dashed, gray, line width=0.8pt] (0.5,0) -- (8.5,0);

  \foreach \x in {1,...,8} {
    \node[circle, draw, minimum size=2mm, inner sep=0pt, fill=white] (v\x) at (\x,0) {};
    \node[above=2pt, font=\footnotesize] at (v\x.north) {\x};
  }

  \foreach \x in {3,4,5,6,7,8}
    \filldraw[fill=black, draw=black, line width=1pt] (v\x) circle (1.5mm);

  \draw (v1) to[out=270, in=270] (v8);
  \draw (v2) to[out=270, in=270] (v3);

  \draw[postaction={decorate, decoration={
    markings, mark=at position 0.5 with {\node[fill=white, draw=black, circle, minimum size=2mm, inner sep=0pt, line width=1pt] {};}}}]
    (v4) to[out=270, in=270] (v5);

  \draw[postaction={decorate, decoration={
    markings, mark=at position 0.5 with {\node[fill=white, draw=black, circle, minimum size=2mm, inner sep=0pt, line width=1pt] {};}}}]
    (v6) to[out=270, in=270] (v7);
\end{scope}

\end{tikzpicture}
\end{center}
\caption{Algorithm \ref{alg:lattice-word-to-smaller web} applied to the 5 unique lattice words resulting from plane partitions in $\tsscpp(6,6,6)$}
\label{fig:6-ncms from sl4 to sl2 map}
\end{figure}

We now focus on the web $W$ with the word $\omega = 1~1~1~2~\barfour~2~\barfour~2~(34)~4~4~(34)~(34)$ from the top right of Figure \ref{fig:6-ncms from sl4 to sl2 map} and give an example of a monomial in the invariant $[W]_q.$ We compare it to the web $\widehat{W}$ obtained by applying Algorithm \ref{alg:lattice-word-to-smaller web} to the plane partition in bijection with $W$ and give a monomial in the invariant $[\widehat{W}]_q.$ The algorithm maps the web $W\to\widehat{W}$ as visually demonstrated in Figure \ref{fig:sl4tosl2}.

The map on invariant spaces is given by:
\[
    \Hom_{U_q(\sl_4)}
    \left(
    V_q^4\otimes V_q^*\otimes V_q\otimes V_q^*\otimes V_q^9,\C(q)
    \right)
    \to
    \Hom_{U_q(\sl_2)}
    \left(
    (V_q^*)^2\otimes V_q^6,\C(q)
    \right).
\]
This map sends
\[
    [W]_q = \cdots+(-1)^{42}q^{\mathsf{wgt}_\kappa(W)}
    x_{\textcolor{orange}{1},\textcolor{purple}{1}}
    x_{\textcolor{orange}{1},\textcolor{purple}{2}}
    x_{\textcolor{orange}{1},\textcolor{purple}{3}}
    x_{\textcolor{orange}{2},\textcolor{purple}{4}}
    (
    y_{\textcolor{purple}{5},\textcolor{orange}{4}}
    )
    x_{\textcolor{orange}{2},\textcolor{purple}{6}}
    (
    y_{\textcolor{purple}{7},\textcolor{orange}{4}}
    )
    \\
    (
    x_{\textcolor{orange}{2},\textcolor{purple}{8}}
    x_{\textcolor{orange}{3},\textcolor{purple}{9}}
    x_{\textcolor{orange}{4},\textcolor{purple}{10}}
    x_{\textcolor{orange}{4},\textcolor{purple}{11}}
    x_{\textcolor{orange}{4},\textcolor{purple}{12}}
    x_{\textcolor{orange}{3},\textcolor{purple}{13}}
    x_{\textcolor{orange}{4},\textcolor{purple}{14}}
    x_{\textcolor{orange}{3},\textcolor{purple}{15}}
    x_{\textcolor{orange}{4},\textcolor{purple}{16}}
    )+\cdots
\]
to
\[
    [\widehat{W}]_q = \cdots+(-1)^{14}q^{\mathsf{wgt}_\kappa(\widehat{W})}
    (
    y_{\textcolor{purple}{1},\textcolor{orange}{2}}
    y_{\textcolor{purple}{2},\textcolor{orange}{2}}
    )(
    x_{\textcolor{orange}{1},\textcolor{purple}{3}}
    x_{\textcolor{orange}{2},\textcolor{purple}{4}}
    x_{\textcolor{orange}{2},\textcolor{purple}{5}}
    x_{\textcolor{orange}{2},\textcolor{purple}{6}}
    x_{\textcolor{orange}{2},\textcolor{purple}{7}}
    x_{\textcolor{orange}{1},\textcolor{purple}{8}}
    )+\cdots.
\]

\begin{figure}[htbp]
\begin{center}
\begin{tikzpicture}[scale=0.3]
\begin{scope}[shift={(-8,0)}]
\node at (-7,0) {$W = $};
\node (pic) at (5,0) {\includegraphics[scale=0.2]{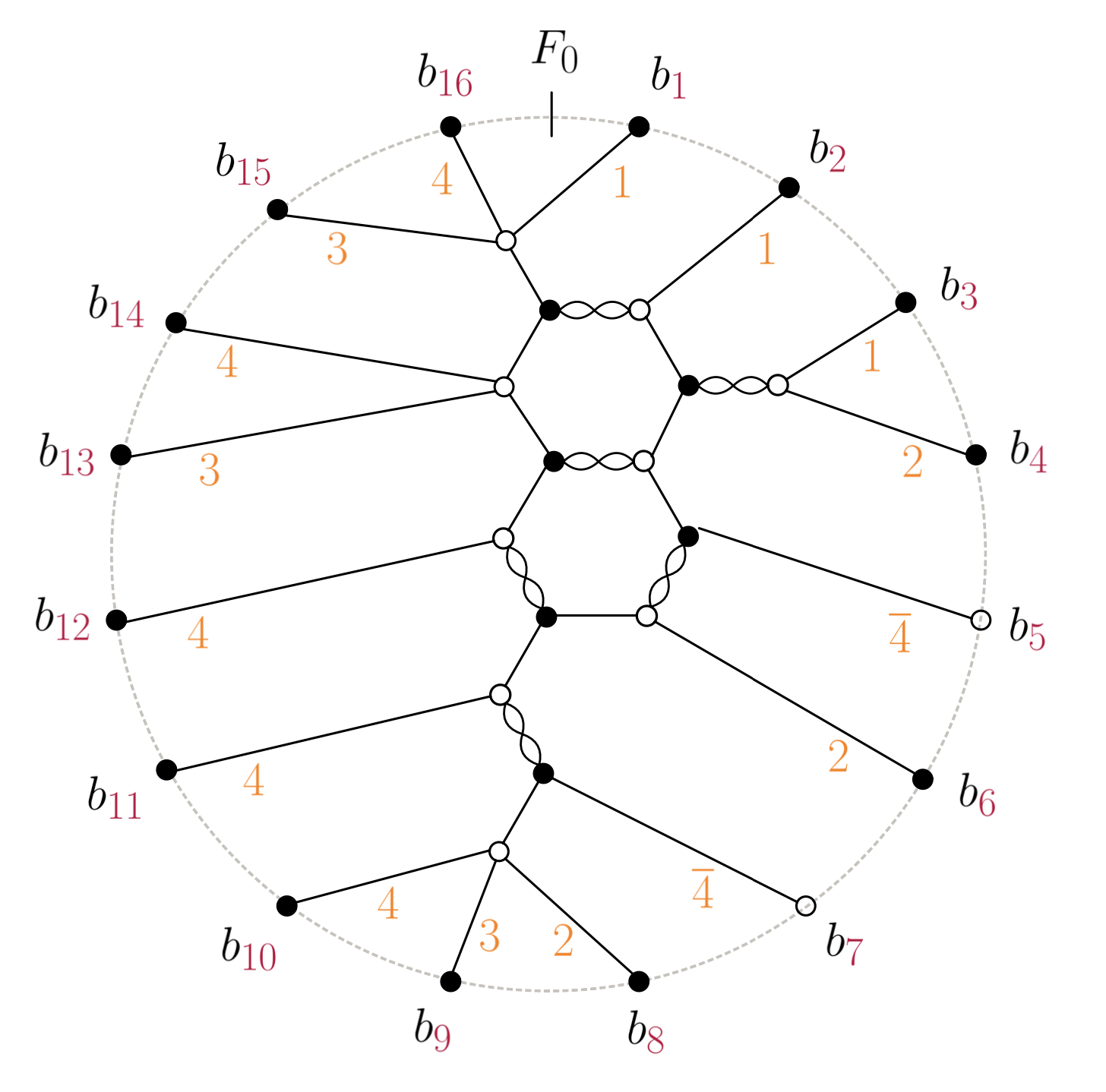}};
\end{scope}

\begin{scope}[shift={(8,0)}]
    \node at (0,0) {$\longrightarrow$};
\end{scope}

\begin{scope}[shift={(21,0)}]
    \node at (-8,0) {$\widehat{W} = $};
    \node (pic) at (0,0) {\includegraphics[scale=0.2]{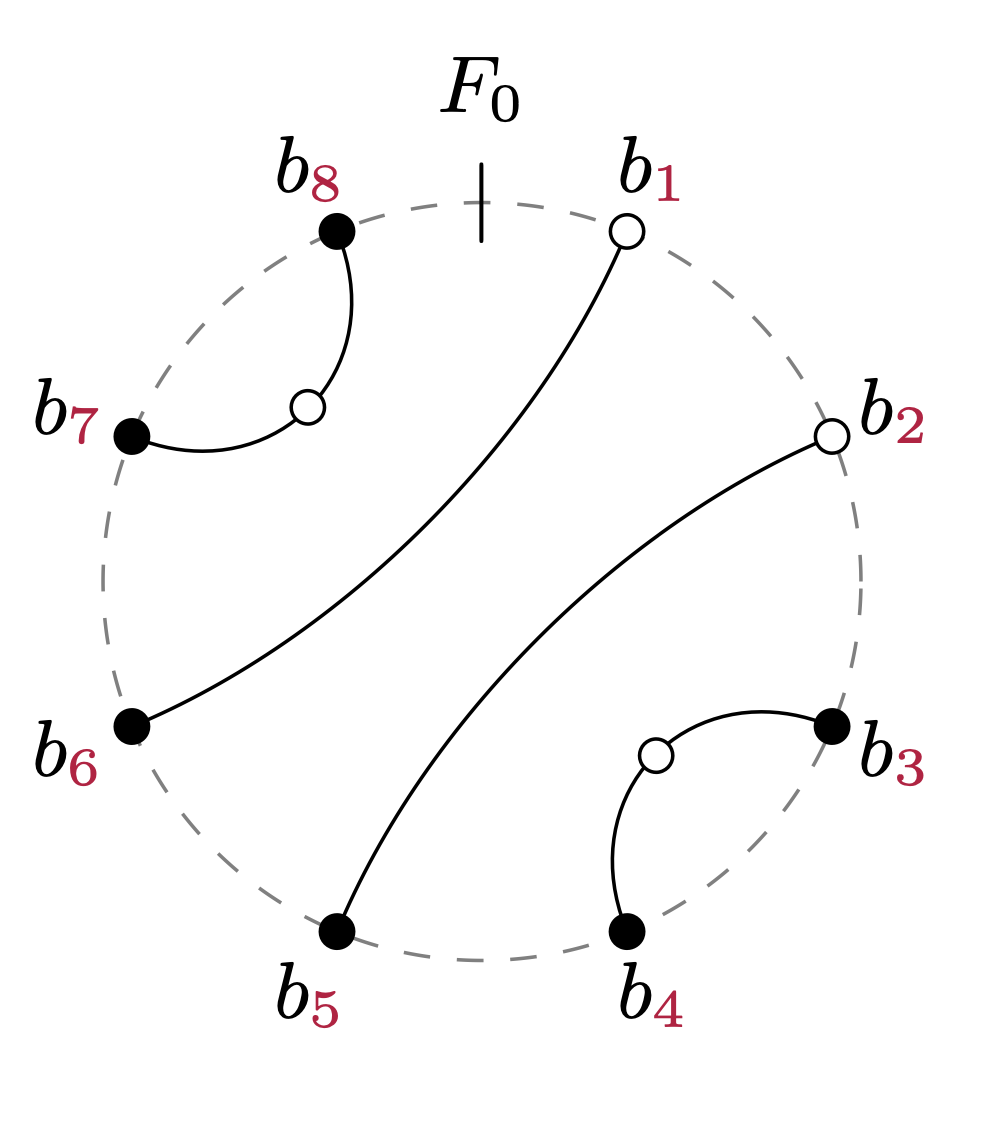}};
\end{scope}
\end{tikzpicture}
\end{center}
\caption{An example of the projection from Theorem \ref{thm:big invariant to small invariant} applied to the web with lattice word $\omega_3$ from Figure \ref{fig:6-ncms from sl4 to sl2 map}.}
\label{fig:sl4tosl2}
\end{figure}

\subsection{Example of Theorem \ref{thm:big invariant to small invariant}: $U_q(\sl_4)$ to $U_q(\sl_3)$} 
\label{sec:ex-tab-to-sl3}

Let $p\in\tspp(4,4,4)$ be the plane partition from Figure \ref{fig:TSPP-ex for proof} and denote the web in bijection with the fundamental domain of $p$ as $W.$ The lattice word of $W$ is 
\[
\omega = 1~1~1~1~(23)~2~(23)~2~4~(34)~4~(34).
\]

In Algorithm \ref{alg:lattice-word-to-smaller web}, the resulting word is 
\[
\widehat{\omega} = (12)~1~(12)~1~3~(23)~3~(23).
\]
Using the $U_q(\sl_3)$ growth rules (\cite[Fig.~3]{petersen2009promotion}), we can construct the web $\widehat{W}$ as seen in Figure \ref{fig:sl4tosl3}. 
The map on invariant spaces is
$
    \Hom_{U_q(\sl_4)}
    \left(
    V_q^{16},\C(q)
    \right)
    \to
    \Hom_{U_q(\sl_3)}
    \left(
    (V_q^{12},\C(q)
    \right).
$

\begin{figure}[htbp]
\begin{center}
\begin{tikzpicture}[scale=0.3]
\begin{scope}[shift={(-8,0)}]
\node at (-9,0) {$W = $};
\node (pic) at (4,0) {\includegraphics[scale=0.25]{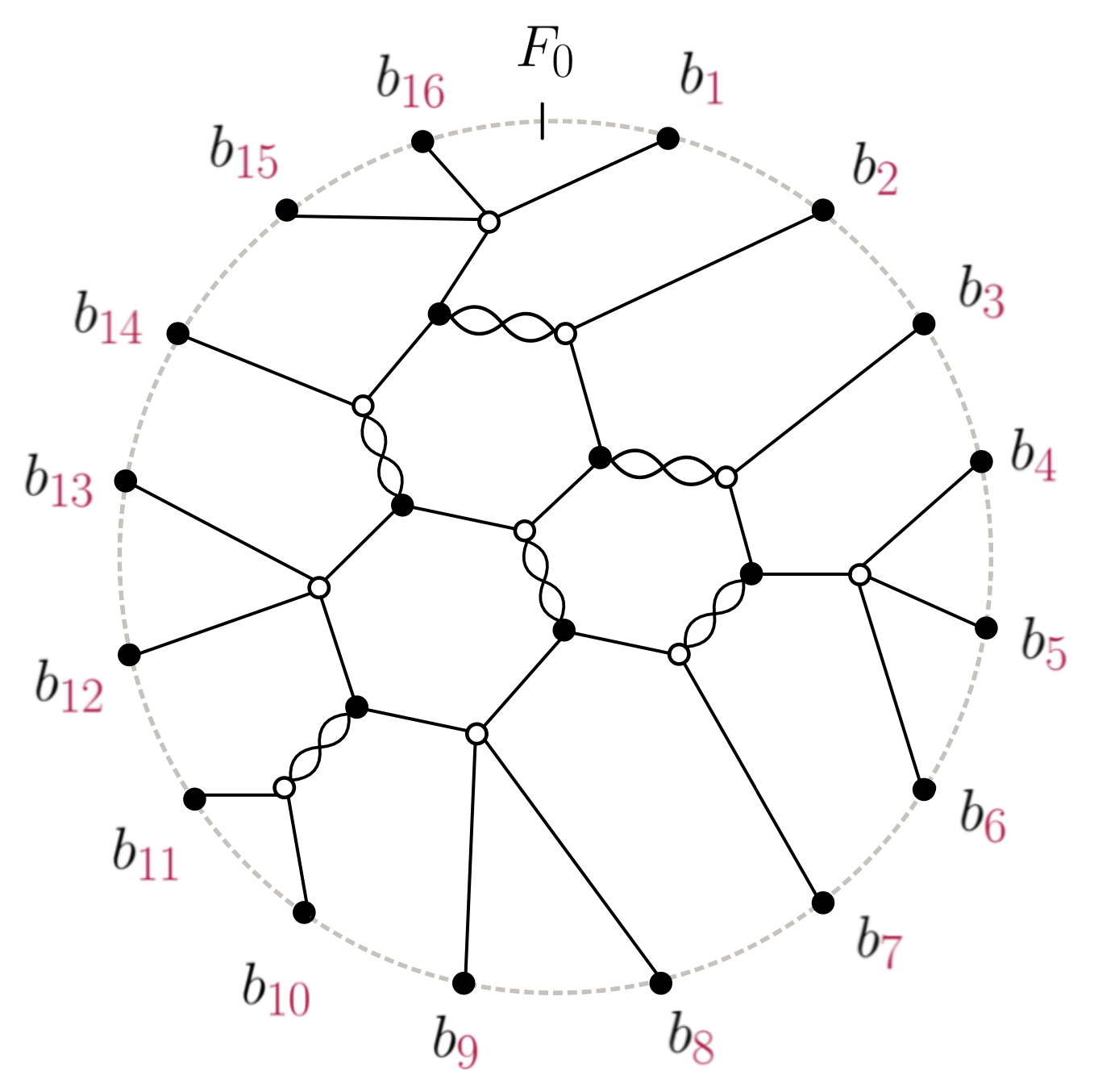}};
\end{scope}

\begin{scope}[shift={(8,0)}]
    \node at (0,0) {$\longrightarrow$};
\end{scope}

\begin{scope}[shift={(22,0)}]
    \node at (-9,0) {$\widehat{W} = $};
    \node (pic) at (3,0) {\includegraphics[scale=0.25]{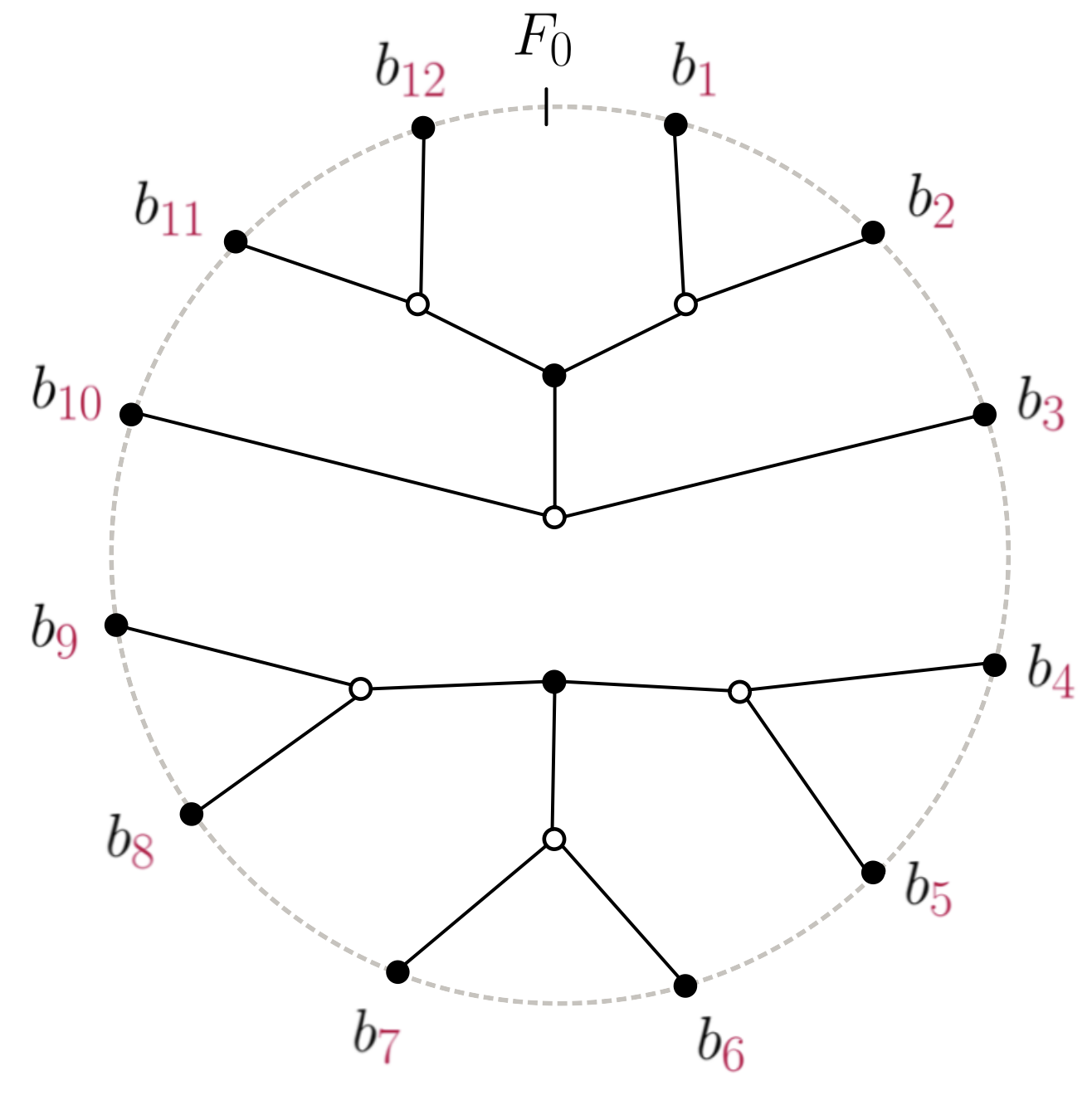}};
\end{scope}
\end{tikzpicture}
\end{center}
\caption{An example of the projection from Theorem \ref{thm:big invariant to small invariant} applied to the web from Figure \ref{fig:TSPP-ex for proof}.}
\label{fig:sl4tosl3}
\end{figure}

\newpage
\bibliographystyle{amsalpha}
\bibliography{refs}

\end{document}